    \newtheorem{thm}{Theorem}[section]
    \newtheorem{cor}[thm]{Corollary}
    \newtheorem{prop}[thm]{Proposition}
    \newtheorem{lem}[thm]{Lemma}
    \newtheorem{conj}[thm]{Conjecture}
    \theoremstyle{definition}
    \newtheorem{defn}[thm]{Definition}
    \newtheorem{defns}[thm]{Definitions}
    \theoremstyle{remark}
    \newtheorem{rem}[thm]{Remark}
    \newcommand{\Z}{\mathbb{Z}}
    \newcommand{\C}{\mathbb{C}}
    \newcommand{\pP}{\mathbb{P}}
    \newcommand{\cO}{\mathcal{O}}
    \newcommand{\cH}{\mathcal{H}}
    \newcommand{\ke}{\mathcal{K}}
    \newcommand{\im}{\mathcal{I}}
    \newcommand{\Gr}{\mathrm{Gr}}
    \newcommand{\Proj}{\mathrm{Proj}}
    \newcommand{\Spec}{\mathrm{Spec}}
    \newcommand{\Hom}{\mathrm{Hom}}
    \newcommand{\Ext}{\mathrm{Ext}}
    \newcommand{\rank}{\mathrm{rank}}
    \newcommand\smvee{\raise0.9ex\hbox{$\scriptscriptstyle\vee$}}
    \newcommand*{\rom}[1]{\expandafter\@slowromancap\romannumeral #1@}
    \let\c@equation\c@thm
    \numberwithin{equation}{section}
\begin{document}

\title{Compactification of the moduli space of minimal instantons on the Fano 3-fold $V_5$}

\author{Xuqiang QIN}
\address{Department of Mathematics, Indiana University, 831 E. Third St., Bloomington, IN 47405, USA}
\email{qinx@iu.edu}

\subjclass[2010]{Primary 14J60, 14F05; Secondary 16G20}

\begin{abstract}
    We study semistable sheaves of rank $2$ with Chern classes $c_1=0$, $c_2=2$ and $c_3=0$ on the Fano 3-fold $V_5$ of Picard number $1$, degree $5$ and index $2$. We show that the moduli space of such sheaves has a component that is  isomorphic to $\mathbb{P}^5$ by identifying it with the moduli space of semistable quiver representations. This provides a natural smooth compactification of the moduli space of minimal instantons, as well as Ulrich bundles of rank $2$ on $V_5$.
\end{abstract}
    
\maketitle
\section{Introduction}

Instanton bundles first appeared in \cite{AHDM}  as a way to describe Yang-Mills instantons on a 4-sphere $S^4$. They provide extremely useful links between mathematical physics and algebraic geometry. The notion of mathematical instanton bundles was first introduced on $\pP^3$. Since then the irreducibility\cite{T}, rationality\cite{MT} and smoothness\cite{JV} of their moduli spaces were heavily investigated. Faenzi\cite{Fa} and Kuznetsov\cite{Ku2} generalized this notion to Fano threefolds, we recall
\begin{defn}\cite{Ku3}
    Let $Y$ be a Fano threefold of index $2$. An \emph{instanton bundle of charge $n$} on $Y$ is a stable vector bundle $E$ of rank $2$ with $c_1(E)=0, c_2(E)=n$, enjoying the instantonic condition: 
    \begin{align*}
        H^1(Y,E(-1))=0.
    \end{align*}
\end{defn}
We mention that the charge $c_2(E)\geq 2$ \cite[Corollary 3.2]{Ku2}. The instanton bundles of charge $2$ are called the \emph{minimal instantons}. 

In this paper, we will be interested in minimal instantons and compactifications of their moduli space on the Fano threefold $V_5$ of degree $5$ and index $2$. Such a threefold is obtained by taking a general codimension $3$ linear section in the Pl\"ucker embedding of $\mathrm{Gr}(2,5)$. The moduli spaces of instanton bundles on $V_5$ were discussed in \cite{Ku3} using quadric nets. They are in general not projective.

On the other hand, Ulrich bundles are defined as vector bundles on a smooth projective variety $X$ of dimension $d$ so that 
\begin{align*}
        H^*(X,E(-t))=0
    \end{align*}
for all $t=1,\ldots,d$. They first appeared in commutative algebra and entered the world of algebraic geometry via \cite{ES}. The existence and moduli space of Ulrich bundles provide great amount of information about the original variety. For example, in the case when $X$ is a smooth hypersurface, the existence of Ulrich bundle is equivalent to the fact that $X$ can be defined set-theoretically by a linear determinant\cite{Beau}. Inspired by \cite{Ku2}, Lahoz, Macr\`i and Stellari\cite{LMS},\cite{LMS2} studied moduli spaces of Ulrich bundles on cubic threefolds and fourfolds using derived categories.
In their recent paper\cite{LP}, Lee and Park described the moduli space of Ulrich bundles on $V_5$. As an easy consequence of their work, we see on $V_5$, (minimal) instanton bundles and Ulrich bundles of rank $2$ differ only by a twist of the very ample divisor $\cO_{V_5}(1)$. Thus they share the same moduli space and compactifications.

Our first result is the classification of semistable rank $2$ sheaves with Chern classes $c_1=0$, $c_2=2$ and $c_3=0$ on $V_5$. \cite{D} classified such sheaves on cubic threefolds and proved that their moduli space is isomorphic to the blow-up of the intermediate Jacobian in the Fano surface of lines. We mimic his method of classification and prove that a parallel classification happens on $V_5$.
\begin{thm}
     Let $E$ be a semistable rank $2$ sheaf with Chern classes $c_1(E)=0$, $c_2(E)=2$ and $c_3(E)=0$ on $V_5$. If $E$ is stable, then either $E$ is locally free or $E$ is associated to a smooth conic $C\subset V_5$ such that we have an exact sequence:
     \begin{align*}
         0\to E\to H^0(\theta(1))\otimes \cO_{V_5}\to \theta(1)\to 0
     \end{align*}
     where $\theta$ is the theta-characteristic of $C$.\\
     If $E$ is strictly semistable, then $E$ is the extension of two ideal sheaves of lines.
\end{thm}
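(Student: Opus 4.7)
My plan is to treat the strictly semistable case and the stable case separately, splitting the latter according to whether $E$ is locally free; the overall strategy mirrors Druel's enumeration on the cubic threefold.

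In the strictly semistable case, I would extract a saturated rank-one subsheaf $F \subset E$ with $\mu(F) = \mu(E) = 0$. Since $V_5$ has Picard number one, $F \cong \mathcal{I}_{Z_1}$ for a subscheme $Z_1$ of dimension at most one, and the quotient $E/F$ is likewise $\mathcal{I}_{Z_2}$. Multiplicativity of the total Chern class in
\[
0 \to \mathcal{I}_{Z_1} \to E \to \mathcal{I}_{Z_2} \to 0,
\]
together with $c_2(E) = 2$, forces $(\deg Z_1, \deg Z_2) \in \{(0,2), (1,1), (2,0)\}$; a direct Grothendieck--Riemann--Roch computation then shows that the asymmetric splits yield $c_3(E) \neq 0$. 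The balanced split $(1,1)$ therefore survives and gives the desired extension of two ideal sheaves of lines.

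For the stable case, I would exploit the canonical double-dual sequence
\[
0 \to E \to E^{\vee\vee} \to Q \to 0,
\]
noting that $E$ is locally free precisely when $Q = 0$. If $Q \neq 0$, one first checks that $E^{\vee\vee}$ is $\mu$-semistable reflexive of rank two with $c_1 = 0$, and Chern-character comparison reads $c_2(E^{\vee\vee}) + \deg Z = 2$, where $Z$ is the one-dimensional part of $\mathrm{Supp}(Q)$. The target is the case $\deg Z = 2$: this forces $c_2(E^{\vee\vee}) = 0$, hence by semistability $E^{\vee\vee} \cong \cO_{V_5}^{\oplus 2}$; then $Z = C$ is a conic, and $Q$ is a quotient of $\cO_C^{\oplus 2}$ whose Chern character and space of global sections pin it down as $\theta(1)$, the degree-one line bundle on the smooth rational curve $C$.

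The principal obstacle is to rule out the intermediate cases $\deg Z \in \{0,1\}$ and to force smoothness of $C$. The case $\deg Z = 0$, where $Q$ is zero-dimensional, should be excluded by an explicit $c_3$ computation combined with $c_3(E) = 0$ and Hartshorne-type positivity for reflexive rank-two sheaves on threefolds. The case $\deg Z = 1$ is the subtlest: here $E^{\vee\vee}$ would be a non-locally-free reflexive sheaf with $c_1 = 0$ and $c_2 = 1$, and one must argue that no such sheaf can combine with the appropriate length-one quotient to yield a stable $E$ with $c_3 = 0$, most likely by analysing the constraints Serre's construction places on such reflexive sheaves on $V_5$. Finally, smoothness of $C$ follows from requiring $\theta(1)$ to be a line bundle with exactly two sections, which fails on a singular conic.
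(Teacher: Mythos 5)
Your skeleton is the same as the paper's (which follows Druel): analyse the double dual $F=E^{\vee\vee}$ and its cokernel $Q$, with the case split governed by $c_2(F)\in\{0,1,2\}$. But the two cases you yourself identify as the crux are exactly where the proof lives, and neither is resolved. For $c_2(F)=1$ the correct resolution is not that ``no such sheaf can combine \dots to yield a stable $E$'': such $E$ do exist, and the argument must show that every one of them is \emph{strictly semistable}. In the paper this is done by showing $h^0(F)=1$, that the zero locus of the section of $F$ is a line $l_2$, that the induced nonzero map $\mathcal{O}_{V_5}\to Q$ has image $\mathcal{O}_{l_1}$ for a line $l_1$ (a one-dimensional kernel $Z$ is forced because $I_Z\subset E$ with $Z$ of dimension $0$ would violate semistability), and then a snake-lemma diagram exhibits $E$ as an extension of $I_{l_1}$ and $I_{l_2}$. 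Similarly, in the case $F\cong\mathcal{O}_{V_5}^{\oplus 2}$ the degree-$2$ curve supporting $Q$ need not be a smooth conic: the genuine dichotomy (Druel's Lemma 3.3) is that either $Q(-1)$ is the theta-characteristic of a smooth conic, or $Q$ is an extension of $\mathcal{O}_{l_1}$ by $\mathcal{O}_{l_2}$ for two lines, in which case $E$ is again strictly semistable. Invoking this dichotomy requires proving $h^0(Q(-1))=0$, which the paper extracts from a rather delicate chain of vanishings ($h^1(E_S(n))=0$ for $n\ge1$ on a general hyperplane section $S$, hence $h^1(E)=h^2(E)=0$, hence $H^0(F)\to H^0(Q)$ is an isomorphism and $H^0(Q)\to H^0(Q_S)$ is injective). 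None of this is in your sketch, and your closing remark that smoothness of $C$ ``follows from requiring $\theta(1)$ to be a line bundle with two sections'' presupposes that $Q$ is a line bundle on a conic, which is precisely what has to be proved.

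Two smaller points. First, the preliminary classification of $F$ itself (that $c_3(F)=0$ in each branch, that $h^0(F)=1$ when $c_2(F)=1$, that $F$ is trivial when $c_2(F)=0$, and the exclusion of $c_2(F)=2$ with $c_3(F)=2$) is obtained in the paper by restricting to a general degree-$5$ del Pezzo hyperplane section and running Mumford--Castelnuovo-type arguments there; ``Hartshorne-type positivity'' gives only $c_3(F)\ge 0$ and does not by itself bound $c_3(F)$ above or exclude the $0$-dimensional-$Q$ case, for which one again needs the semistability contradiction $I_p\subset E$. Second, in your strictly semistable case the asymmetric splits $(0,2)$ and $(2,0)$ are not excluded by a $c_3$ computation: they are excluded because a Jordan--H\"older factor must have the \emph{same reduced Hilbert polynomial} as $E$, and comparing $\chi(I_Z(n))$ with $\tfrac12\chi(E(n))$ forces each factor to be the ideal sheaf of a degree-$1$ curve with $\chi(\mathcal{O}_Z)=1$, i.e.\ a line. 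That part of your argument is repairable in one line; the gaps in the stable case are the substantive ones.
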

Unfortunately, the method to study the moduli space in \cite{D} does not transfer well to $V_5$. However, we note that $\mathcal{D}^b(V_5)$ has a semi-orthogonal decomposition:
\begin{align*}
    \mathcal{D}^b(V_5)=\langle \mathcal{U}, \mathcal{Q}^{\vee},\cO_{V_5},\cO_{V_5}(1)\rangle
\end{align*}
where $\mathcal{U}$ is the restriction of the universal subbundle and $\mathcal{Q}$ the universal quotient bundle. It is well-known that the subcategory $\mathcal{B}_{V_5}\coloneqq\langle \mathcal{U},\mathcal{Q}\smvee\rangle$ is equivalent to the derived category of finite dimensional representations of the Kronecker quiver $Q_3$ with three arrows:
\begin{align*}
    \Psi:\mathcal{B}_{V_5}\simeq \mathcal{D}^b(Q_3).
\end{align*}
Our next result establish the relation between the sheaves on $V_5$ and representations of $Q_3$:
\begin{thm}
    Let $R$ be a $(-1,1)$-semistable representation of $Q_3$ with dimension vector $(2,2)$, then the $\Psi^{-1}$-induced complex
    \begin{align*}
    C_R: \mathcal{U}^{\oplus 2}\xrightarrow{f_R}\mathcal{Q}\smvee^{\oplus 2}
\end{align*}
in $\mathcal{D}^b(V_5)$ where we put the second term in degree $0$ is isomorphic to a semistable rank $2$ sheaf $E$ with Chern classes $c_1(E)=0$, $c_2(E)=2$ and $c_3(E)=0$.
\end{thm}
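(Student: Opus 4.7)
The plan is to unwind the equivalence $\Psi$ explicitly, then verify in three steps that $C_R$ is isomorphic in $\mathcal{D}^b(V_5)$ to a genuine coherent sheaf $E$ with the required properties. Choose a basis $\alpha_1, \alpha_2, \alpha_3$ of $W := \Hom(\mathcal{U}, \mathcal{Q}\smvee)$ corresponding to the three arrows of $Q_3$, so that the representation $R = (V_1, V_2; \phi_1, \phi_2, \phi_3)$ yields the morphism $f_R = \sum_{i=1}^3 \alpha_i \otimes \phi_i \in \Hom(V_1 \otimes \mathcal{U},\, V_2 \otimes \mathcal{Q}\smvee)$. The $(-1,1)$-semistability of $R$ unpacks to two concrete conditions on the $\phi_i$: (i) $\bigcap_i \ker \phi_i = 0$, ruling out subrepresentations of dimension vector $(1,0)$ and $(2,0)$, and (ii) $\sum_i \operatorname{im} \phi_i = V_2$, ruling out subrepresentations of dimension vector $(2,1)$.

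First, I would show $f_R$ is injective as a morphism of sheaves, so that $C_R \simeq E := \operatorname{coker}(f_R)$ is a sheaf concentrated in degree $0$. Factor $f_R$ as
\begin{equation*}
    V_1 \otimes \mathcal{U} \xrightarrow{\widetilde{\phi}\, \otimes\, \id_\mathcal{U}} V_2 \otimes W \otimes \mathcal{U} \xrightarrow{\id_{V_2}\, \otimes\, \mathrm{ev}} V_2 \otimes \mathcal{Q}\smvee,
\end{equation*}
where $\widetilde{\phi}\colon V_1 \to V_2 \otimes W$ is $v \mapsto \sum_i \phi_i(v) \otimes \alpha_i$, and $\mathrm{ev}\colon W \otimes \mathcal{U} \to \mathcal{Q}\smvee$ is the universal evaluation map. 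Condition (i) makes $\widetilde{\phi}$ injective. At a generic point $\eta \in V_5$, $\mathrm{ev}_\eta$ is surjective with $3$-dimensional kernel, and the dimensional inequality $4 + 6 = 10 < 12$ already suggests generic injectivity of $f_R$. I would prove it by analyzing the tensor rank of a hypothetical kernel element $\xi \in V_1 \otimes \mathcal{U}_\eta$: a rank-$1$ element $\xi = a \otimes b$ with $f_R(\xi) = 0$ gives the relation $\sum_i \phi_i(a) \otimes \alpha_i(b) = 0$ in $V_2 \otimes \mathcal{Q}\smvee_\eta$, which combined with condition (i) and the linear independence of $\{\alpha_i(b)\}_{i=1}^3$ for generic $b$ forces $a = 0$; rank-$2$ elements then require a finer analysis invoking (ii).

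Second, I would compute the Chern classes from the short exact sequence $0 \to \mathcal{U}^{\oplus 2} \to \mathcal{Q}\smvee^{\oplus 2} \to E \to 0$. Using Schubert calculus on $\Gr(2,5)$, one finds $c(\mathcal{U}) = 1 - H + 2L$ and $c(\mathcal{Q}\smvee) = 1 - H + 3L - \mathrm{pt}$ on $V_5$, where $H$, $L$, $\mathrm{pt}$ denote the hyperplane, line, and point classes satisfying $H^3 = 5\,\mathrm{pt}$, $H^2 = 5L$, and $H \cdot L = \mathrm{pt}$. Expanding $c(E) = c(\mathcal{Q}\smvee)^2 / c(\mathcal{U})^2$ yields $c(E) = 1 + 2L$, giving the required $c_1(E) = 0$, $c_2(E) = 2$, $c_3(E) = 0$.

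Third, I would prove $E$ is Gieseker semistable. A destabilizing subsheaf $F \subset E$ pulls back along $V_2 \otimes \mathcal{Q}\smvee \twoheadrightarrow E$ to a subsheaf $\widetilde{F}$ fitting into $0 \to V_1 \otimes \mathcal{U} \to \widetilde{F} \to F \to 0$; applying $\Psi$ produces a subobject of $R$ in $\mathcal{D}^b(Q_3)$. Comparing dimension vectors and slopes, this subobject yields a proper subrepresentation of $R$ whose $\theta$-slope violates $(-1,1)$-semistability, contradicting the hypothesis on $R$. The principal obstacle is the injectivity of $f_R$ at the generic point: translating representation-theoretic semistability into sheaf-theoretic injectivity requires a delicate analysis of the interplay between $\widetilde{\phi}$ and $\mathrm{ev}$, and the rank-$2$ kernel case is the subtle point where both conditions (i) and (ii) must be brought to bear simultaneously.
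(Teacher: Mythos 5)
Your overall architecture (injectivity of $f_R$, Chern class computation, semistability) matches the paper's, and the Chern class step is fine, but both of the substantive steps have genuine gaps, and in each case the paper uses a different and more robust mechanism than the one you sketch. For injectivity, your fiberwise tensor-rank analysis at the generic point is not carried out: you concede that the rank-$2$ kernel case is unresolved, and even the rank-$1$ case rests on the unproved claim that $\alpha_1(b),\alpha_2(b),\alpha_3(b)$ are linearly independent for generic $b$ (each individual $\alpha_i\colon\mathcal{U}\to\mathcal{Q}\smvee$ already drops rank along a line, so this is a genuinely geometric assertion). The paper avoids the fiberwise picture entirely: since $\mathcal{U}^{\oplus 2}$ and $\mathcal{Q}\smvee^{\oplus 2}$ are $\mu$-semistable of slopes $-1/2$ and $-1/3$, the image $\mathcal{I}$ of $f_R$ satisfies $-1/2\leq\mu(\mathcal{I})\leq-1/3$, forcing $\rank(\mathcal{I})\in\{2,3,4\}$; in the rank-$2$ and rank-$3$ cases the kernel (resp.\ the saturation of the image) is identified with $\mathcal{U}$ (resp.\ $\mathcal{Q}\smvee$) via $\mu$-stability and uniqueness of Jordan--H\"older factors, which exhibits a subrepresentation of $R$ of dimension $(1,0)$ or $(2,1)$, contradicting $(-1,1)$-semistability. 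This is the idea your proposal is missing, and it is what turns the "delicate analysis" you flag into a short numerical argument.

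For semistability, two points. First, you never address torsion-freeness of $E$, which is part of the definition of Gieseker semistability and is nontrivial here: the paper shows that if $E_{tor}\neq 0$ then it is supported in dimension at most $1$, computes $\Ext^1(E_{tor},\mathcal{U}^{\oplus 2})=0$ by Serre duality, and concludes that the kernel $K$ of $\mathcal{Q}\smvee^{\oplus 2}\to E_{tf}$ splits as $\mathcal{U}^{\oplus 2}\oplus E_{tor}$, contradicting torsion-freeness of $K\subset\mathcal{Q}\smvee^{\oplus 2}$. Second, your step of "applying $\Psi$" to the preimage $\widetilde F$ of a destabilizing subsheaf does not work as stated: an arbitrary subsheaf $F\subset E$ need not lie in $\mathcal{B}_{V_5}$, and $\widetilde F$ is merely a subsheaf of $\mathcal{Q}\smvee^{\oplus 2}$ with no reason to be quasi-isomorphic to a two-term complex of copies of $\mathcal{U}$ and $\mathcal{Q}\smvee$, so it does not produce a subrepresentation of $R$. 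The paper instead argues directly on the sheaf side: a rank-$1$ subsheaf is of the form $I_Z(a)$, the cases $a<0$ and $a>0$ are handled by slope bounds against $\mathcal{Q}\smvee^{\oplus 2}$, and in the critical case $a=0$, $c_2=0$ an $\Ext$-vanishing shows the preimage $J$ splits as $E'\oplus\mathcal{U}^{\oplus 2}$, embedding $E'$ into $\mathcal{Q}\smvee^{\oplus 2}$ and contradicting its semistability. You would need to supply arguments of this kind to close the proof.
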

Using this relation, we construct a morphism from the moduli space of semistable representations $M_{rep}$ to the moduli space $M$ of semistable rank $2$ sheaves with Chern classes $c_1=0$, $c_2=2$ and $c_3=0$ and prove:
\begin{thm}
    There exists a morphism $\phi\colon M_{rep}\to M$ which identifies $M_{rep}$ with a connected component of $M$. As a result, the moduli space of semistable rank $2$ sheaves with Chern classes $c_1=0$, $c_2=2$ and $c_3=0$ on $V_5$ has a connected component isomorphic to $\pP^5$.
\end{thm}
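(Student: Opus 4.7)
The plan is to use a families version of Theorem~2 to build $\phi$, show it is étale and injective on closed points (hence an open immersion), and then exploit properness of $M_{rep}$ to force its image to be both open and closed, i.e., a connected component of $M$. That $M_{rep} \cong \pP^5$ for this dimension vector and stability is standard for the $3$-Kronecker quiver, and would be established earlier in the paper.

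To construct $\phi$, I would start from the universal representation on $M_{rep}$, consisting of two rank-$2$ tautological bundles with three maps between them. Applying $\Psi^{-1}$ componentwise over $M_{rep} \times V_5$ yields a relative two-term complex of shape $\mathcal{U}^{\oplus 2} \to \mathcal{Q}\smvee^{\oplus 2}$, whose fiber over $R$ is precisely the complex $C_R$. By Theorem~2 each $C_R$ is quasi-isomorphic to a semistable sheaf $E_R$ of the required numerical type, with $f_R$ fiberwise injective. A cohomology-and-base-change argument — whose key input is that semistability forces $f_R$ to have constant rank on each fiber — then shows that the cohomology sheaf of the relative complex is flat over $M_{rep}$ with the predicted fibers, and the universal property of $M$ produces $\phi$.

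For $\phi$ to be an open immersion, two checks are needed. Injectivity on closed points: if $E_{R_1} \cong E_{R_2}$, then the isomorphisms $C_{R_i} \simeq E_{R_i}$ of Theorem~2 yield $C_{R_1} \cong C_{R_2}$ in $\mathcal{D}^b(V_5)$; since both objects lie in $\mathcal{B}_{V_5}$, the equivalence $\Psi$ identifies $R_1 \cong R_2$. Bijectivity on tangent spaces: the differential at $R$ factors as
\[
\Ext^1_{Q_3}(R, R) \xrightarrow{\Psi^{-1}} \Ext^1_{V_5}(C_R, C_R) \xrightarrow{\sim} \Ext^1_{V_5}(E_R, E_R),
\]
the first arrow being the derived equivalence and the second coming from the quasi-isomorphism $C_R \simeq E_R$ of Theorem~2. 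Both maps are isomorphisms, so $\phi$ is étale; combined with point-injectivity this gives an open immersion.

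Finally, since $M_{rep} \cong \pP^5$ is projective, $\phi(M_{rep})$ is closed in $M$; being also open, it is a connected component of $M$ isomorphic to $\pP^5$. I expect the main obstacle to be the first step: proving flatness of the relative cohomology sheaf and identifying its fibers on the nose rather than merely in an $S$-equivalence class requires a careful cohomology-and-base-change analysis, and one must ensure that semistability really does propagate uniformly along the family so that $f_R$ is of constant rank throughout. Once $\phi$ is in hand, the injectivity on closed points and on tangent spaces, and the deduction that the image is a connected component, are essentially formal consequences of the derived equivalence $\Psi$ together with Theorem~2.
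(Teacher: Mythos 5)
Your overall architecture (build $\phi$ from the quiver side, show it is injective and a local isomorphism, then use properness of $\pP^5$ to conclude the image is open and closed) matches the paper's, but two steps are carried out only on the stable locus and genuinely fail at the strictly semistable points, which is exactly where the paper has to work hardest. First, injectivity: if $\phi([R_1])=\phi([R_2])$ is a strictly semistable point of $M$, you only know that $E_{R_1}$ and $E_{R_2}$ are $S$-equivalent (both to some $I_{l_1}\oplus I_{l_2}$), not isomorphic, so you cannot feed them into the equivalence $\Psi$ to conclude $R_1\cong R_2$ as representations. The paper closes this gap with a separate lemma showing that for any extension $E$ of $I_{l_2}$ by $I_{l_1}$, the representation $\Psi(E)$ (an upper-triangular triple of $2\times 2$ matrices) degenerates under the $G_{2,2}$-action to $\Psi(I_{l_1}\oplus I_{l_2})$, hence defines the same point of $M_{rep}$. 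Second, your \'etaleness claim: the identification of the tangent space of a GIT moduli space with $\Ext^1(R,R)$, resp.\ $\Ext^1(E,E)$, is only valid at stable points; at a polystable point $I_{l_1}\oplus I_{l_2}$ the local structure is a Luna slice quotient $\Ext^1(E,E)/\!\!/\mathrm{Aut}(E)$, and the ``formal'' tangent-space comparison does not apply. Consequently you only get an open immersion on the stable locus, and then the image of $\phi$ is a closed set containing a nonempty open set --- not necessarily open --- so the final ``open and closed'' step collapses.

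The paper's route around this is worth noting: it proves surjectivity of $\phi$ onto the open subscheme $M^{inst}\subset M$ of instanton sheaves (every semistable sheaf of this type with $H^1(E(-1))=0$ sits in a resolution $0\to\mathcal{U}^{\oplus 2}\to\mathcal{Q}\smvee^{\oplus 2}\to E\to 0$ whose associated representation is semistable --- your proposal omits this direction entirely), and it separately proves that $M^{inst}$ is smooth, including at the strictly semistable points, by computing $\Ext^1(I_{l_1},I_{l_2})$ and $\Ext^2(I_{l_1},I_{l_2})=0$ and checking via Luna's slice theorem that the local models $T_xW/\!\!/G_x$ are isomorphic to $\aA^5$. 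With injectivity, surjectivity, properness, birationality on the stable locus, and normality of the target in hand, Zariski's main theorem gives that $\phi$ is an isomorphism onto $M^{inst}$, which is then open and closed in $M$. You would need to supply the surjectivity statement and the boundary analysis (or the smoothness of $M^{inst}$ at the polystable points) to make your argument complete.
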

We believe our results can be generalized to find compactifications of moduli spaces of Ulrich bundles of higher ranks on $V_5$. Also similar ideas should work in finding the moduli space of instanton sheaves on Fano threefolds other than $V_5$ and cubics. In fact in \cite{Q2}, the author proved that moduli space of semistable rank $2$ sheaves with Chern classes $c_1=0$, $c_2=2$ and $c_3=0$ on the degree $4$ Fano threefold $V_4$ is isomorphic to the moduli space of semistable rank 2 even degree vector bundles on a genus 2 curve.

This paper is organized as follows. In the second section the reader can find some preliminary definitions and results that are used throughout the paper. In the third section we classify semistable rank $2$ sheaves with Chern classes $c_1=0$, $c_2=2$ and $c_3=0$ on $V_5$, showing the parallel result as on cubic threefolds holds. In the fourth section we connect such sheaves to representations of the Kronecker quiver using derived category. In the last section we describe the moduli spaces.
\subsection*{Notations and conventions}
\begin{itemize}
    \item We work over the complex numbers $\C$.
    \item Let $E$ be a sheaf on $V_5$, we use $E_{tor}$ to denote the torsion part of $E$ and $E_{tf}$ to denote the torsion-free quotient $E/E_{tor}$.
    \item Let $E$ be a sheaf on $V_5$. We use $H^i(E)$ to denote $H^i(V_5,E)$ for simplicity. Also we use $h^i(E)$ to denote the dimension of $H^i(V_5,E)$ as a complex vector space.
    \item Let $F$ be a sheaf or a representation with certain characterization, we will use $[F]$ to denote the point it corresponds to in the moduli space.
\end{itemize}
\subsection*{Acknowledgement}
The author would like to thank his advisor Valery Lunts for constant support and inspiring discussions. He would like to thank Michael Larsen, Wai-Kit Yeung, Hui Yu, Shizhuo Zhang for useful discussions. He would like to thank Kyeong-Dong Park and the anonymous referee for many helpful comments.
\section{Preliminaries}
    
\subsection{Fano 3-fold $V_5$}
Let $V$ be a complex vector space of dimension $5$, let $A\subset \Lambda^2V^*$ be a $3$-dimensional subspace of $2$-forms on $V$. It is well known that if $A$ is generic, then the intersection of $\Gr(2,V)$ with $\pP(A^{\perp})$ in $\pP(\Lambda^2V)$ is a smooth Fano threefold of Picard number $1$, degree $5$ and index $2$. Varying $A$ generically will provide projectively equivalent varieties. We use $V_5$ to denote this unique smooth threefold. Note $V_5$ comes with a natural choice of a very ample line bundle $\cO_{V_5}(1)$.  We will always use this polarization for the rest of this paper. Let $S\in |\cO_{V_5}(1)|$ be a generic hyperplane section, then $S$ is a smooth del Pezzo surface of degree $5$.\\

The cohomology group of $V_5$ is isomorphic to $\Z^4$:
\begin{align*}
    H^*(V_5,\Z)&=H^0(V_5,\Z)\oplus H^2(V_5,\Z)\oplus H^4(V_5,\Z)\oplus H^6(V_5,\Z)\\
    &=\Z[V_5]\oplus\Z[h]\oplus\Z[l]\oplus\Z[p]
\end{align*}
with $h\cdot l=p,h^2=5l,h^3=5p$.\\
Let $\mathcal{U}$ be the restriction of the universal subbundle on $\Gr(2,5)$ to $V_5$ and $\mathcal{Q}$ be the restriction of the universal quotient bundle, we have an exact sequence:
\begin{align*}
    0\to \mathcal{U}\to \C^5\otimes\cO_{V_5}\to \mathcal{Q}\to 0.
\end{align*}
$\mathcal{U}$ and $\mathcal{Q}\smvee$ will play important roles in this paper. We first note $\Hom(\mathcal{U},\mathcal{Q}\smvee)=A$ (see \cite{O}). Their cohomology groups were computed in \cite[Lemma 2.9]{LP}:
\begin{lem}
    The cohomology groups $H^i(V_5,\mathcal{U}(j))$ and $H^i(V_5,\mathcal{Q}\smvee(j))$ for $j=-2,-1,0$ are as follows:
    \begin{align*}
        &H^*(V_5,\mathcal{U})=H^*(V_5,\mathcal{U}(-1))=0\\
        &H^*(V_5,\mathcal{Q}\smvee)=H^*(V_5,\mathcal{Q}\smvee(-1))=0\\
        &H^i(V_5,\mathcal{U}(-2))=
        \begin{cases}
            \C^5 &\text{if i=3}\\
            0   &\text{otherwise}
        \end{cases}\\
         &H^i(V_5,\mathcal{Q}\smvee(-2))=
        \begin{cases}
            \C^5 &\text{if i=3}\\
            0   &\text{otherwise}
        \end{cases}
    \end{align*}
\end{lem}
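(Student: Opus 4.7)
The plan is to reduce the cohomology computations on $V_5$ to cohomology of homogeneous bundles on $G := \Gr(2,V)$ via the Koszul resolution of $\cO_{V_5}$. Since $V_5$ is the zero locus of a regular section of $\cO_G(1)^{\oplus 3}$ determined by $A \subset \Lambda^2 V^*$, we have
\begin{equation*}
0 \to \cO_G(-3) \to \cO_G(-2)^{\oplus 3} \to \cO_G(-1)^{\oplus 3} \to \cO_G \to \cO_{V_5} \to 0.
\end{equation*}
Because $\mathcal{U}$ and $\mathcal{Q}\smvee$ are the restrictions of the universal bundles $\mathcal{U}_G$ and $\mathcal{Q}_G\smvee$, the projection formula identifies $H^*(V_5, \mathcal{U}(j))$ and $H^*(V_5, \mathcal{Q}\smvee(j))$ with the hypercohomology on $G$ of the Koszul complex tensored with $\mathcal{U}_G(j)$ and $\mathcal{Q}_G\smvee(j)$, respectively. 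It therefore suffices to compute, for $j \in \{-2,-1,0\}$ and $k \in \{0,1,2,3\}$, the groups $H^*(G, \mathcal{U}_G(j-k))$ and $H^*(G, \mathcal{Q}_G\smvee(j-k))$.

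Each of these is determined by Borel--Weil--Bott on the homogeneous space $G = \mathrm{GL}(V)/P$. Writing the bundles as Schur functors in the tautological sequence and shifting by the Weyl vector $\rho$, the vast majority of the weights in our range become singular and hence contribute zero. The only surviving pieces live at the extremes of the range: at the bottom end, $H^0(G,\cO_G) = \C$, and at the top end, by Serre duality on the six-dimensional variety $G$ with $\omega_G = \cO_G(-5)$, the groups $H^6(G, \mathcal{U}_G(-5))$ and $H^6(G, \mathcal{Q}_G\smvee(-5))$, both isomorphic to $V = \C^5$. Feeding these into the $E_1$-page of the hypercohomology spectral sequence for the Koszul complex, every entry vanishes except for the single $\C^5$ that occupies the outermost corner when $j=-2$; this survives as $H^3(V_5, \mathcal{U}(-2)) = \C^5$ and $H^3(V_5, \mathcal{Q}\smvee(-2)) = \C^5$, while all other groups in the statement vanish.

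The main obstacle is the bookkeeping: verifying the Borel--Weil--Bott calculation for each of the eight relevant twists of each bundle and checking that no differentials on the $E_r$-pages destroy the surviving $\C^5$. Two sanity checks keep this manageable. First, Serre duality on $V_5$ with $\omega_{V_5} = \cO_{V_5}(-2)$ together with $\mathcal{U}\smvee = \mathcal{U}(1)$ gives $H^i(V_5, \mathcal{U}(-1)) \cong H^{3-i}(V_5, \mathcal{U})^{\vee}$, reducing the vanishings of $H^*(\mathcal{U})$ and $H^*(\mathcal{U}(-1))$ to one another, and an analogous duality handles $\mathcal{Q}\smvee$. Second, the stated $\C^5$ in top degree is Serre dual to an $H^0$ of a positive twist, which one reads off directly from the tautological sequence $0 \to \mathcal{U} \to V \otimes \cO_{V_5} \to \mathcal{Q} \to 0$ and its dual. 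With these anchors in place, the remaining cases reduce to routine weight computations on $G$.
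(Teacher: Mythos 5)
Your proposal is correct, but note that the paper does not actually prove this lemma: it simply cites \cite[Lemma 2.9]{LP}, so you are supplying an argument where the paper supplies a reference. Your route --- resolving $\cO_{V_5}$ by the Koszul complex of the regular section of $\cO_G(1)^{\oplus 3}$ on $G=\Gr(2,V)$ cut out by $A$, tensoring with $\mathcal{U}_G(j)$ resp.\ $\mathcal{Q}_G\smvee(j)$, and evaluating each term by Borel--Weil--Bott --- is the standard way to obtain these vanishings, and the bookkeeping does come out as you claim: for each of the two bundles the twists by $\cO_G(m)$ with $-4\le m\le 0$ are acyclic (the shifted weight always has a repeated entry), while the twist by $\cO_G(-5)=\omega_G$ contributes exactly $H^6\cong\C^5$ by Serre duality against $H^0(G,\mathcal{U}_G\smvee)=V^*$ resp.\ $H^0(G,\mathcal{Q}_G)=V$; since this is the only nonzero entry on the $E_1$-page, no differential can touch it and it lands in $H^3(V_5,\mathcal{U}(-2))$ resp.\ $H^3(V_5,\mathcal{Q}\smvee(-2))$, with everything else zero. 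Two small imprecisions are worth fixing. First, the ``surviving piece at the bottom end, $H^0(G,\cO_G)=\C$'' is a red herring: the trivial bundle never occurs among the terms of your twisted Koszul complexes (every term is a twist of $\mathcal{U}_G$ or $\mathcal{Q}_G\smvee$), and indeed nothing may survive in degree $0$ if the claimed vanishing of $H^*(\mathcal{U})$ and $H^*(\mathcal{Q}\smvee)$ is to hold. Second, the Serre-duality sanity check for $\mathcal{Q}\smvee$ is not literally ``analogous'' to the one for $\mathcal{U}$: since $\mathcal{Q}$ has rank $3$, one gets $H^i(V_5,\mathcal{Q}\smvee(-1))^{\vee}\cong H^{3-i}(V_5,\mathcal{Q}(-1))=H^{3-i}(V_5,\Lambda^2\mathcal{Q}\smvee)$, which relates the statement to a different bundle rather than back to $\mathcal{Q}\smvee$ itself; this does not affect your main argument, which never uses it, but as written the check does not close the loop the way the rank-$2$ identity $\mathcal{U}\smvee=\mathcal{U}(1)$ does.
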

The Chern classes of $\mathcal{U}$ and $\mathcal{Q}\smvee$ are as follows:
\begin{align*}
    &\rank(\mathcal{U})=2, c_1(\mathcal{U})=-h, c_2(\mathcal{U})=2l\\
    &\rank(\mathcal{Q}\smvee)=3, c_1(\mathcal{Q}\smvee)=-h, c_2(\mathcal{Q}\smvee)=3l, c_3(\mathcal{Q}\smvee)=-p
\end{align*}
Moreover, we have
\begin{align*}
    \mathrm{td}(\mathcal{T}_{V_5})&=1+h+\frac{8}{3}l+p.
\end{align*}
The Fano variety of lines on $V_5$ is $\pP(A)\simeq\pP^2$. It parametrizes the  ideal sheaves of lines in $V_5$. Moreover each ideal sheaf corresponds to a representation of $Q_3$ with dimension vector $(1,1)$(see section $2.4$), thanks to the following result:
\begin{lem}\cite[Lemma 4.2]{Ku2}
    For each point $a\in \pP(A)$, we have an exact sequence:
    \begin{align}
        0\to \mathcal{U}\xrightarrow{a} \mathcal{Q}\smvee\to I_{L}\to 0
    \end{align}
    where $L$ is the line corresponding to $a$.
\end{lem}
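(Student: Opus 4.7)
The plan is to build the map $\alpha_a\colon \mathcal{U}\to\mathcal{Q}\smvee$ as the contraction with the $2$-form $a$, pin down its degeneracy locus as the line $L$ associated to $a$, and then apply a Hilbert--Burch-style local analysis to identify the cokernel with $I_L$. For the construction, note that since $V_5\subset \pP(A^\perp)$, the form $a$ restricts to zero on $W$ for every $[W]\in V_5$, so the contraction $w\mapsto a(w,\cdot)\colon W\to V^*$ takes values in the annihilator $W^\perp=\mathcal{Q}\smvee|_{[W]}$. Globalizing produces a morphism $\alpha_a\colon \mathcal{U}\to\mathcal{Q}\smvee$ which, under the identification $\Hom(\mathcal{U},\mathcal{Q}\smvee)\cong A$ recalled above, corresponds to the original element $a$.

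For the degeneracy locus, the fiber of $\ker(\alpha_a)$ at $[W]$ is $W\cap\ker(a\colon V\to V^*)$. Since the rank-$\leq 2$ locus in $\pP(\Lambda^2 V^*)$ has codimension $3$, the generic $3$-dimensional subspace $A$ used to cut out $V_5$ misses it entirely, so every nonzero $a\in A$ has rank exactly $4$ and $\ker(a)=\langle v_0\rangle$ is $1$-dimensional. Hence the degeneracy locus is $D_a=\{[W]\in V_5:v_0\in W\}$, namely the intersection of the Schubert $\pP(v_0\wedge V)\cong\pP^3\subset \pP(\Lambda^2 V)$ with $V_5$. Because $a$ annihilates $v_0\wedge V$ entirely, the $3$-dimensional space $A$ cuts out only a $2$-dimensional space of linear conditions on $v_0\wedge V$, producing $\pP(v_0\wedge V)\cap \pP(A^\perp)=\pP(v_0\wedge V_0)\cong\pP^1$ for the unique $3$-dimensional subspace $V_0=\{v\in V:v_0\wedge v\in A^\perp\}\supset \langle v_0\rangle$; this $\pP^1$ is precisely the line $L\subset V_5$ corresponding to $a$.

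For the cokernel, the map $\alpha_a$ is injective as a map of sheaves because its fiber rank is $2$ at a generic point, so the short exact sequence $0\to\mathcal{U}\to\mathcal{Q}\smvee\to F\to 0$ defines $F$ of generic rank $1$. The Chern classes of $\mathcal{U}$ and $\mathcal{Q}\smvee$ recalled above give $c_1(F)=0$ and $c_2(F)=l$. Around any point of $L$, choosing a local basis puts $\alpha_a$ in the standard Hilbert--Burch shape, whose $2\times 2$ minors generate the ideal of $L$ scheme-theoretically; the Hilbert--Burch theorem then yields $F\cong I_Z$ for a Cohen--Macaulay codimension-$2$ subscheme $Z$, and the $c_2$ and support computations force $Z=L$, giving $F\cong I_L$. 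The main obstacle is precisely this last identification: ensuring that $F$ carries no torsion supported on $L$ and that the scheme structure on the vanishing locus matches the reduced line $L$. Both points are handled uniformly by Hilbert--Burch/Eagon--Northcott once the degeneracy locus has been verified to be pure of the expected codimension $2$, which is the content of the previous paragraph.
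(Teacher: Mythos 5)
The paper offers no proof of this lemma at all: it is quoted directly from Kuznetsov \cite[Lemma 4.2]{Ku2}, so there is nothing internal to compare your argument against. Taken on its own terms, your proof is correct and is the natural direct argument: the contraction construction of $\alpha_a$ (using that $a|_W=0$ for $[W]\in V_5\subset\pP(A^\perp)$, so that $a(w,\cdot)\in W^\perp=\mathcal{Q}\smvee|_{[W]}$), the computation that $\ker(a)$ is a line $\langle v_0\rangle$ for every nonzero $a$ in a generic $A$, the identification of the degeneracy locus with $\pP(v_0\wedge V)\cap\pP(A^\perp)$, and the Hilbert--Burch/Eagon--Northcott identification of the cokernel with an ideal sheaf (the determinant twist $\det\mathcal{Q}\smvee\otimes\det\mathcal{U}^{-1}=\cO_{V_5}$ being trivial, consistent with $c_1(F)=0$) are all sound. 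Two steps deserve one more line each. First, your claim that $A$ imposes \emph{exactly} two independent linear conditions on $v_0\wedge V$, so that the degeneracy locus is a $\pP^1$ and not a $\pP^2$ or $\pP^3$: the cleanest justification is that $V_5$ has Picard number $1$ and degree $5$, hence contains no linearly embedded planes, so the intersection cannot jump in dimension. Second, the identification $Z=L$ of the determinantal scheme $Z$ cut out by the $2\times 2$ minors with the reduced line: since the degeneracy locus has the expected codimension $2$, the scheme $Z$ is Cohen--Macaulay without embedded points, and $[Z]=c_2(F)=l=[L]$ forces generic reducedness, whence $Z=L$; this is presumably what you mean by the ``$c_2$ and support computations,'' but it is the one place where torsion or a non-reduced structure could in principle hide, so it should be spelled out. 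With those two clarifications the argument is complete.
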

The restriction of any vector bundles on $V_5$ to a line $L$ splits into direct sum of line bundles, we have:
\begin{lem}\cite[Lemma 2.17]{Sa}
For any line $L$ in $V_5$, we have
\begin{align*}
    \mathcal{U}|_L&=\cO_L(-1)\oplus\cO_L\\
    \mathcal{Q}\smvee|_L&=\cO_L(-1)\oplus\cO_L\oplus\cO_L.
\end{align*}
\end{lem}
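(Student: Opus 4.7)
The plan is to combine Grothendieck's splitting theorem on $\mathbb{P}^1$ with the concrete description of lines in $V_5 \subset \Gr(2,V)$ in terms of flags. Every vector bundle on $L \simeq \mathbb{P}^1$ splits as a direct sum of line bundles, so the only question is to pin down the splitting types, which amounts to exhibiting natural sub- or quotient line bundles.

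First I would recall that a line $L \subset V_5 \subset \Gr(2,V)$ is, in particular, a line in $\Gr(2,V)$, and such lines are parametrized by flags $V_1 \subset V_3 \subset V$ with $\dim V_1 = 1$ and $\dim V_3 = 3$: concretely $L = \{W \in \Gr(2,V) : V_1 \subset W \subset V_3\} \simeq \mathbb{P}(V_3/V_1)$. For every $W \in L$ the inclusion $V_1 \hookrightarrow W$ gives a nowhere-vanishing section of $\mathcal{U}|_L$, i.e.\ a subbundle $\cO_L \hookrightarrow \mathcal{U}|_L$. Since $\det \mathcal{U}|_L = \cO_L(c_1(\mathcal{U}) \cdot L) = \cO_L(-1)$, the quotient is $\cO_L(-1)$, so $\mathcal{U}|_L$ sits in an extension of $\cO_L(-1)$ by $\cO_L$. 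That extension is classified by $\Ext^1(\cO_L(-1), \cO_L) = H^1(L, \cO_L(1)) = 0$, so it splits and $\mathcal{U}|_L \simeq \cO_L \oplus \cO_L(-1)$.

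For $\mathcal{Q}^\vee|_L$ the argument is dual. For every $W \in L$ we have $W \subset V_3$, so the projection $V \twoheadrightarrow V/V_3$ factors through $\mathcal{Q}|_L = V/W$ and yields a surjection $\mathcal{Q}|_L \twoheadrightarrow (V/V_3) \otimes \cO_L = \cO_L^{\oplus 2}$. Equivalently, dualizing gives a subbundle $\cO_L^{\oplus 2} \hookrightarrow \mathcal{Q}^\vee|_L$ with locally free rank-$1$ quotient. Computing the determinant, $\det \mathcal{Q}^\vee|_L = \cO_L(-1)$, hence the quotient is $\cO_L(-1)$, and again $\Ext^1(\cO_L(-1), \cO_L^{\oplus 2}) = 0$ forces the extension to split, giving $\mathcal{Q}^\vee|_L \simeq \cO_L(-1) \oplus \cO_L \oplus \cO_L$.

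There is no genuine obstacle here: once the flag description of lines in $\Gr(2,V)$ is in hand, everything reduces to identifying a canonical trivial sub- (resp.\ quotient) bundle coming from $V_1$ (resp.\ $V/V_3$) and a degree count, with the vanishing of $\Ext^1$ on $\mathbb{P}^1$ doing the rest.
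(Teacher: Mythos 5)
Your argument is correct and complete. Note that the paper itself offers no proof of this lemma --- it is simply quoted from Sanna's thesis \cite[Lemma 2.17]{Sa} --- so there is no in-paper argument to compare against; your flag-based proof (lines in $\Gr(2,V)$ are $\{W : V_1\subset W\subset V_3\}$, the trivial subbundle of $\mathcal{U}|_L$ from $V_1$ and the trivial quotient of $\mathcal{Q}|_L$ from $V/V_3$, a degree count via $c_1\cdot L=-1$, and vanishing of the relevant $\Ext^1$ on $\pP^1$) is a standard and fully valid way to establish it. A marginally quicker variant, which avoids even the extension-class step: $\mathcal{U}^{\smvee}$ and $\mathcal{Q}$ are globally generated (being restrictions of globally generated bundles on the Grassmannian) with $c_1=h$, so their restrictions to $L$ are globally generated bundles on $\pP^1$ of total degree $1$; all Grothendieck summands are then $\geq 0$ and sum to $1$, forcing $\mathcal{U}^{\smvee}|_L=\cO_L(1)\oplus\cO_L$ and $\mathcal{Q}|_L=\cO_L(1)\oplus\cO_L\oplus\cO_L$, and dualizing gives the stated splittings. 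Both routes buy the same thing; yours has the advantage of also identifying the distinguished trivial factors geometrically, which can be useful elsewhere.
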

Since a smooth conic $C$ is isomorphic to $\pP^1$, any restriction of vector bundle also splits. We clarify our notation here: we will use $\theta$ to denote the theta characteristic of $C$, which is the inverse of the ample generator the Picard group of $C$. By $\cO_C(1)$ we mean the very ample line bundle corresponding to the embedding of the conic. It is twice the ample generator in the Picard group.
\begin{lem}\cite[Lemma 2.40]{Sa}
    Let $C$ be any smooth conic in $V_5$. We have 
    \begin{align*}
        \mathcal{U}|_C&=\theta\oplus\theta,\\
    \mathcal{Q}\smvee|_C&=\theta\oplus\theta\oplus\cO_C.
    \end{align*}
\end{lem}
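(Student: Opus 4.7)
Since $C\cong\pP^1$, Grothendieck's splitting theorem expresses both $\mathcal{U}|_C$ and $\mathcal{Q}\smvee|_C$ as direct sums of line bundles, and the degrees are fixed by $c_1$: using $h\cdot C=2$, one has $\deg(\mathcal{U}|_C)=\deg(\mathcal{Q}\smvee|_C)=-2$. With $\theta=\cO_{\pP^1}(-1)$, the claimed splittings $\theta^{\oplus 2}$ and $\theta^{\oplus 2}\oplus\cO_C$ are the unique degree-$(-2)$ splittings satisfying the cohomology data $h^0=0$ (rank $2$) and $h^0=1$, $h^1=0$ (rank $3$). I therefore plan to compute $H^0$ in each case using the tautological sequence (respectively its dual) on $V_5$, restricted to $C$.

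For $\mathcal{U}|_C$, restricting $0\to\mathcal{U}\to\cO_{V_5}^{\oplus 5}\to\mathcal{Q}\to 0$ to $C$ identifies $H^0(\mathcal{U}|_C)$ with the set of vectors $v\in\C^5$ lying in the fiber $\mathcal{U}_x$ for every $x\in C$. I aim to show this space vanishes: such a $v$ would embed $C$ into the Schubert cycle $\{V'\in\Gr(2,5):v\in V'\}$, which sits inside $\pP(\Lambda^2\C^5)$ as a linear $\pP^3$. Its intersection with $V_5$ is therefore an intersection of two linear subspaces of $\pP^9$, hence itself linear. Now $V_5$ cannot equal $\pP^3$ (since $\deg V_5=5$), nor can it contain a linear $\pP^2$: the class $dh$ of such a plane in $H^2(V_5,\Z)$ would have to satisfy $5d=h^2\cdot dh=1$, which has no integer solution. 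Thus the intersection is at most a line, which cannot contain the smooth conic $C$; so $H^0(\mathcal{U}|_C)=0$ and $\mathcal{U}|_C=\theta\oplus\theta$.

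For $\mathcal{Q}\smvee|_C$, dualize to $0\to\mathcal{Q}\smvee\to(\C^5)^*\otimes\cO_{V_5}\to\mathcal{U}\smvee\to 0$, restrict to $C$, and use $\mathcal{U}\smvee|_C=\cO_C(1)^{\oplus 2}$ from the previous step. The long exact sequence gives $H^0(\mathcal{Q}\smvee|_C)=W^\perp\subset(\C^5)^*$, where $W:=\sum_{x\in C}\mathcal{U}_x\subset\C^5$. The Euler characteristic $\chi(\mathcal{Q}\smvee|_C)=1$ forces $h^0\ge 1$, hence $\dim W\le 4$; and $\dim W=2$ would force $C$ to collapse to a point. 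The one remaining possibility to exclude is $\dim W=3$, which would place $C$ inside the $\beta$-plane $\Gr(2,W)\cong\pP^2\subset\Gr(2,5)$; as in the previous paragraph, the intersection of this linear $\pP^2$ with $V_5$ is linear of dimension at most $1$ and cannot contain the smooth conic $C$. Therefore $\dim W=4$, $h^0=1$, $h^1=0$, and $\mathcal{Q}\smvee|_C=\cO_C\oplus\theta\oplus\theta$ as claimed. The single geometric input driving everything is the integrality obstruction ruling out linear $\pP^2$'s in $V_5$; once that is in place, both splittings fall out of the same cohomology-and-span analysis.
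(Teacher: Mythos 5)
Your proof is correct, and it is worth noting that the paper itself gives no argument here at all --- it simply cites \cite[Lemma 2.40]{Sa} --- so what you have produced is a self-contained replacement rather than a variant of the paper's proof. The structure is sound: Grothendieck splitting plus the degree computation $\deg(\mathcal{U}|_C)=\deg(\mathcal{Q}\smvee|_C)=-h\cdot[C]=-2$ reduces everything to showing $h^0(\mathcal{U}|_C)=0$ and $h^0(\mathcal{Q}\smvee|_C)=1$, and your identification of these spaces with $\{v\in\C^5: v\in\mathcal{U}_x\ \forall x\in C\}$ and with $W^\perp$ for $W=\sum_{x\in C}\mathcal{U}_x$ is exactly right. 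The geometric input --- that $\sigma_v\cong\pP^3$ and $\Gr(2,W)\cong\pP^2$ are \emph{linear} subvarieties of $\pP(\Lambda^2\C^5)$, so their intersections with $V_5=\Gr(2,5)\cap\pP(A^\perp)$ are linear, and that $V_5$ contains no linear $\pP^2$ because $h^2\cdot dh=5d$ can never equal $1$ --- is a clean and complete way to rule out the degenerate cases, and correctly excludes any plane containing the span of the conic. One notational slip: you write $\mathcal{U}\smvee|_C=\cO_C(1)^{\oplus 2}$, but under the paper's stated convention $\cO_C(1)$ is the degree-$2$ line bundle (twice the ample generator), whereas the dual of $\theta^{\oplus 2}$ is $(\theta^{\smvee})^{\oplus 2}$, of degree $1$ on each summand. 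This does not affect your argument, since you only use the kernel of $(\C^5)^*\to H^0(\mathcal{U}\smvee|_C)$ and the Euler characteristic $\chi(\mathcal{Q}\smvee|_C)=1$, but the bundle should be written $\theta^{\smvee\oplus 2}$ to match the paper's conventions.
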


\subsection{Stability of sheaves} Let $X$ be a smooth projective variety of dimension $n$ and $\cO_X(1)$ be a fixed ample line bundle. Let $E$ be a coherent sheaf of rank $r$, then the slope of $E$ is defined as:
\begin{align*}
    \mu(E)=\frac{c_1(E)\cdot c_1(\cO_X(1))^{n-1}}{rc_1(\cO_X(1))^{n}}
\end{align*}
For a torsion-free sheaf $E$, the \emph{reduced Hilbert polynomial} $p(E)$ is defined by
\begin{align*}
    p(E,n)=\frac{\chi(E(n))}{\rank(E)}
\end{align*}
The sheaf $E$ is called \emph{(semi)stable} if it is torsion-free and for any torsion-free subsheaf $F\subset E$ with $0<\rank(F)<\rank(E)$, we have 
\begin{align*}
    p(F)(\leq)<p(E)
\end{align*}
for $n\gg0$. \\
The sheaf $E$ is called \emph{$\mu$-(semi)stable} if it is torsion-free and for any torsion-free subsheaf $F\subset E$ with $0<\rank(F)<\rank(E)$, we have 
\begin{align*}
    \mu(F)(\leq)<\mu(E)
\end{align*}
We have the following implications:
\begin{align*}
    \mu-\text{stable}\Rightarrow \text{stable} \Rightarrow \text{semistable}\Rightarrow \mu-\text{semistable}
\end{align*}
A very useful criterion for stability of vector bundles is due to Hoppe:
\begin{lem}
    Assume the Picard group of $X$ is $\Z$ and its ample divisor $\cO_X(1)$ has global sections. Let $F$ be a vector bundle of rank $r$ on $X$ so that for each $1\leq k\leq r-1$, $(\Lambda^k(F))_{norm}$ has no global sections. Then $F$ is $\mu$-stable.
\end{lem}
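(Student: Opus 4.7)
The plan is to argue by contrapositive: assuming $F$ is not $\mu$-stable, I will produce a nonzero global section of $(\Lambda^k(F))_{norm}$ for some $1\leq k\leq r-1$. Write $h$ for the ample generator of $\mathrm{Pic}(X)$ and $c_1(F)=d_F\cdot h$ with $d_F\in\Z$. Using $c_1(\Lambda^kF)=\binom{r-1}{k-1}d_Fh$ and $\rank\Lambda^kF=\binom{r}{k}$, the normalization is $(\Lambda^k(F))_{norm}=\Lambda^kF(-\lceil kd_F/r\rceil)$.

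Suppose $F$ is not $\mu$-stable, so there is a torsion-free subsheaf $E\subset F$ of rank $k$ with $0<k<r$ and $\mu(E)\geq\mu(F)$. Replacing $E$ by its saturation in $F$, I may further assume that $F/E$ is torsion-free, which forces $E$ to be reflexive. On a dense open subset $U\subset X$ whose complement has codimension $\geq 2$, both $E|_U$ and $(F/E)|_U$ are locally free, so the inclusion $E|_U\hookrightarrow F|_U$ is a subbundle and induces an inclusion $\det(E)|_U=\Lambda^k(E)|_U\hookrightarrow\Lambda^k(F)|_U$. Since $\Lambda^kF$ is locally free on $X$, this map extends uniquely (by Hartogs on the smooth variety $X$) across the complement of $U$ to a map $\det E\to\Lambda^kF$, which is injective because its kernel would be a rank $0$ or $1$ subsheaf of the line bundle $\det E$ supported in codimension $\geq 2$, forcing it to vanish. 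Writing $\det E=\cO_X(d_E)$ with $d_E\in\Z$, this injection is a nonzero section $s\in H^0(\Lambda^kF(-d_E))$.

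The slope inequality $\mu(E)\geq\mu(F)$ reads $d_E/k\geq d_F/r$, i.e.\ $d_E\geq kd_F/r$; since $d_E$ is an integer this yields $d_E\geq\lceil kd_F/r\rceil$, so that $N\coloneqq d_E-\lceil kd_F/r\rceil\geq 0$. Because $\cO_X(1)$ has global sections, I can choose a nonzero $t\in H^0(X,\cO_X(N))$ (taking $t=1$ if $N=0$ and otherwise a product of $N$ nonzero sections of $\cO_X(1)$). Since $\Lambda^kF(-d_E)$ is locally free, multiplication by $t$ is injective, so $s\cdot t$ is a nonzero section of $\Lambda^kF(-d_E+N)=(\Lambda^k(F))_{norm}$, contradicting the hypothesis.

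The main technical point I expect to need care with is producing the line bundle injection $\det E\hookrightarrow\Lambda^kF$ from the mere subsheaf $E\subset F$: the saturation step is what makes $E$ reflexive so that $\det E$ is a genuine line bundle, and it is the local freeness of $\Lambda^kF$ that allows the generically defined map to extend uniquely across a codimension-$\geq 2$ locus while remaining injective. Everything else is bookkeeping with ceilings and crucially uses the hypothesis that $\cO_X(1)$ has nonzero global sections to bridge $H^0(\Lambda^kF(-d_E))$ and $H^0((\Lambda^k(F))_{norm})$.
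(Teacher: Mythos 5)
The paper does not prove this lemma at all: it is quoted as Hoppe's criterion, a standard result, with no argument supplied. Your proof is the standard (and correct) one — saturate the destabilizing subsheaf $E$, extend the generically defined line-bundle injection $\det E\hookrightarrow\Lambda^kF$ across the codimension-$\geq 2$ locus using local freeness of $\Lambda^kF$, and then use $\mu(E)\geq\mu(F)$ together with a section of $\cO_X(N)$, $N=d_E-\lceil kd_F/r\rceil\geq 0$, to push the resulting section of $\Lambda^kF(-d_E)$ into $(\Lambda^kF)_{norm}$. The bookkeeping with $c_1(\Lambda^kF)=\binom{r-1}{k-1}c_1(F)$, $\mu(\Lambda^kF)=k\mu(F)$, and the paper's normalization convention checks out, and you correctly isolate where the hypothesis that $\cO_X(1)$ has global sections enters.
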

\noindent Here, for a sheaf $F$, $F_{norm}$ is the unique twist $F(n)$ such that $-1<\mu(F(n))\leq 0$. Using this result, we can easily check
\begin{lem}
    The vector bundles $\mathcal{U},\mathcal{U}\smvee , \mathcal{Q},\mathcal{Q}\smvee$ are all $\mu$-stable.
\end{lem}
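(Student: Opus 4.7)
The plan is a direct application of Hoppe's criterion (the preceding lemma) to each of the four bundles, reducing the question to a finite list of $H^{0}$-vanishings that follow from the cohomology table for $\mathcal{U}(j)$ and $\mathcal{Q}\smvee(j)$ stated above, together with the tautological sequence $0\to\mathcal{U}\to\C^{5}\otimes\cO_{V_{5}}\to\mathcal{Q}\to 0$. Since $\mathrm{Pic}(V_{5})=\Z\cdot h$ and $\cO_{V_{5}}(1)$ is very ample, the hypotheses of Hoppe's criterion are met.

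First I would handle the rank-$2$ cases. A short Chern class computation using $h^{2}=5l$, $h^{3}=5p$ and $c_{1}(\mathcal{U})=-h$ gives $\mu(\mathcal{U})=-1/2$, so $\mathcal{U}_{\mathrm{norm}}=\mathcal{U}$ and the criterion needs only $H^{0}(\mathcal{U})=0$, which is in the cohomology lemma. For $\mathcal{U}\smvee$, the rank-$2$ identity $\mathcal{U}\smvee\simeq\mathcal{U}\otimes(\det\mathcal{U})^{-1}\simeq\mathcal{U}(1)$ gives $(\mathcal{U}\smvee)_{\mathrm{norm}}=\mathcal{U}\smvee(-1)\simeq\mathcal{U}$, and again $H^{0}(\mathcal{U})=0$ closes the case.

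For the rank-$3$ cases I have to test $k=1,2$. For $\mathcal{Q}\smvee$ one computes $\mu(\mathcal{Q}\smvee)=-1/3$, so $(\mathcal{Q}\smvee)_{\mathrm{norm}}=\mathcal{Q}\smvee$ and $H^{0}(\mathcal{Q}\smvee)=0$ by the cohomology lemma. For $\Lambda^{2}\mathcal{Q}\smvee$ I would use the rank-$3$ identity $\Lambda^{2}F\simeq F\smvee\otimes\det F$ to identify $\Lambda^{2}\mathcal{Q}\smvee\simeq\mathcal{Q}(-1)$; its slope is $-2/3$, so $(\Lambda^{2}\mathcal{Q}\smvee)_{\mathrm{norm}}=\mathcal{Q}(-1)$. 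Twisting the tautological sequence by $\cO_{V_{5}}(-1)$ gives
\begin{equation*}
0\to\mathcal{U}(-1)\to\C^{5}\otimes\cO_{V_{5}}(-1)\to\mathcal{Q}(-1)\to 0,
\end{equation*}
and since $H^{0}(\cO_{V_{5}}(-1))=0$ and $H^{1}(\mathcal{U}(-1))=0$ by the cohomology lemma, the long exact sequence forces $H^{0}(\mathcal{Q}(-1))=0$. The case of $\mathcal{Q}$ is symmetric: $\mu(\mathcal{Q})=1/3$ gives $\mathcal{Q}_{\mathrm{norm}}=\mathcal{Q}(-1)$, handled above, and $\Lambda^{2}\mathcal{Q}\simeq\mathcal{Q}\smvee(1)$ has $(\Lambda^{2}\mathcal{Q})_{\mathrm{norm}}=\mathcal{Q}\smvee$, whose $H^{0}$ vanishes.

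There is essentially no obstacle beyond bookkeeping: the only step that is not a direct quotation of the cohomology lemma is the vanishing $H^{0}(\mathcal{Q}(-1))=0$, and even that follows from a one-line diagram chase on the twisted tautological sequence. Assembling these verifications, Hoppe's criterion yields $\mu$-stability of all four bundles.
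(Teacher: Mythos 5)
Your proof is correct and is exactly the argument the paper intends: the paper offers no written proof, only the remark that the lemma can be ``easily checked'' using Hoppe's criterion, and your case-by-case verification (reducing everything to $H^0(\mathcal{U})=H^0(\mathcal{Q}\smvee)=H^0(\mathcal{Q}(-1))=0$ via the cohomology lemma and the twisted tautological sequence) is the standard way to carry that out.
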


We also recall the following useful results:

\begin{prop}\cite{H1}\label{hlemma}
  Let $X$ be a smooth projective variety of dimension at least $2$ and $E$ a vector bundle of rank $2$ on $X$. If there exists a global section of $E$ whose zero locus is pure codimension $2$, then we have a short exact sequence:
  \begin{align*}
      0\to \cO_X\to E\to I_Y\otimes \mathrm{det}(E)\to 0.
  \end{align*}
\end{prop}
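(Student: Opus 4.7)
The plan is to exhibit the sequence explicitly, with the second map given by wedge product with $s$, and to verify exactness by a local computation that reduces to the Koszul complex for the regular sequence formed by the components of $s$.

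First I would set up the maps. The section $s \in H^0(X,E)$ corresponds to a morphism $s\colon \cO_X \to E$; taking the exterior product against $s$ gives a second map $\psi \colon E \to \Lambda^2 E = \det(E)$ defined locally by $e \mapsto s \wedge e$. Since $s \wedge s = 0$, one has $\psi \circ s = 0$, so
\[
0 \to \cO_X \xrightarrow{s} E \xrightarrow{\psi} \det(E)
\]
is a complex. It remains to check three things: that $s$ is injective, that $\psi$ has image $I_Y \otimes \det(E)$, and that the sequence is exact in the middle.

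Next I would work locally. On an open $U \subset X$ where $E \simeq \cO_U^{\oplus 2}$ trivializes, $s$ is represented by a pair $(s_1,s_2)$ and $\psi$ by $(a,b) \mapsto s_1 b - s_2 a$ (up to a trivialization of $\det(E)$). The image of $\psi$ is then the ideal $(s_1, s_2)$, which is exactly the ideal cutting out $Y$ locally on $U$, so globally $\mathrm{im}(\psi) \cong I_Y \otimes \det(E)$. Injectivity of $s$ is immediate since $Y \ne X$ forces $(s_1,s_2) \ne (0,0)$ at the generic point and $\cO_X$ is torsion-free.

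The crux, and the only place where the codimension hypothesis enters, is exactness in the middle: if $s_1 b = s_2 a$ in $\cO_U$, then one must have $(a,b) = f(s_1,s_2)$ for some $f$, i.e.\ $(s_1,s_2)$ is a regular sequence. Since $X$ is smooth its local rings are Cohen-Macaulay, and the assumption that $V(s_1,s_2)$ has pure codimension $2$ forces $\mathrm{depth}(s_1,s_2) = 2$, hence regularity; the associated Koszul resolution then supplies the needed middle exactness. The main (and really only) obstacle in the argument is this deduction of regularity from pure codimension $2$; once granted, the rest is a formal assembly of the local pieces into the global short exact sequence.
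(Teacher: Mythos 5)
Your argument is correct and complete: it is the standard Koszul-complex proof, with the key point --- that pure codimension $2$ of the zero locus, together with the Cohen--Macaulayness of the smooth local rings, forces the local components $(s_1,s_2)$ of the section to form a regular sequence --- correctly identified and justified. The paper itself gives no proof, simply citing Hartshorne [Ha1], where essentially this same argument appears (phrased via the dual Koszul sequence $0\to \mathrm{det}(E)^{\vee}\to E^{\vee}\to I_Y\to 0$ and then twisting by $\mathrm{det}(E)$), so your route matches the intended one.
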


\begin{prop}[Mumford-Castelnuovo Criterion]\label{mumford}
  Let $F$ be a coherent sheaf on a projective variety $X$. Suppose $h^i(X,F(-i))=0$ for all $i\geq 1$, then $h^i(X,F(k))=0$ for all $i\geq 1$ and $k\geq -i$. Moreover $F$ is generated by global sections.
\end{prop}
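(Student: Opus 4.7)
I would prove this by the classical hyperplane-section induction due to Mumford, running the induction on $d := \dim \Supp(F)$. The base case $d=0$ is immediate: a coherent sheaf with finite support has vanishing higher cohomology after any twist, and is generated by $H^0(F)$, which equals the finite direct sum of its stalks. For the inductive step I would pick a general section $s\in H^0(X,\cO_X(1))$ whose zero locus $H$ avoids the finitely many associated points of $F$, so that multiplication by $s$ is injective and yields the short exact sequence
\begin{equation*}
    0 \to F(-1) \xrightarrow{\cdot s} F \to F_H \to 0,
\end{equation*}
with $F_H := F/sF$ supported in strictly smaller dimension.

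The first substantial step is to check that $F_H$ inherits the same hypothesis on $H$. Twisting by $\cO(-i)$ and reading off the fragment
\begin{equation*}
H^i(F(-i)) \to H^i(F_H(-i)) \to H^{i+1}(F(-i-1))
\end{equation*}
forces $h^i(F_H(-i))=0$ for all $i\geq 1$, since both outer terms vanish by hypothesis. By induction on $d$, both conclusions of the proposition then hold for $F_H$. To push the cohomology vanishing back up to $F$, I fix $i\geq 1$ and induct on $k\geq -i$: the base case is the hypothesis, and the fragment
\begin{equation*}
    H^i(F(k-1)) \to H^i(F(k)) \to H^i(F_H(k))
\end{equation*}
closes the induction, since the left term vanishes by the $k$-induction and the right by the $d$-induction applied to $F_H$.

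For global generation, $h^1(F(-1))=0$ makes the restriction map $H^0(F)\to H^0(F_H)$ surjective, so sections of $F$ already generate $F_H$ globally. Since $s$ vanishes at every $p\in H$ we have $sF_p\subseteq \mathfrak{m}_p F_p$, and Nakayama's lemma applied to $F_p/\mathfrak{m}_p F_p$ shows $H^0(F)$ generates $F_p$ for every $p\in H$. The main obstacle I anticipate is extending this to arbitrary $p\in X$: since the admissible hyperplanes $H$ form a Zariski-dense family of sections and (after embedding $X\hookrightarrow \pP^N$ via $|\cO_X(1)|$ if necessary) collectively cover $X$, the cokernel of the evaluation map $H^0(F)\otimes \cO_X\to F$ is a coherent sheaf whose support meets no admissible $H$, which forces it to be empty.
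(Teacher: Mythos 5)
The paper states this criterion without proof (it is Mumford's classical lemma), so there is no in-paper argument to compare against; judged on its own terms, your hyperplane-section induction for the cohomology-vanishing half is the standard argument and is correct: the choice of an $F$-regular section, the verification that $F_H$ is again $0$-regular, and the double induction on $\dim\Supp(F)$ and on $k$ all work. The gap is in the final step, global generation. An admissible section $s$ by definition does not vanish at any associated point of $F$; consequently, if $F$ has a \emph{closed} associated point $p$ (an embedded point or a zero-dimensional piece of $\Supp(F)$, which the hypotheses do not exclude --- e.g.\ $F=\cO_{\pP^n}\oplus k(p)$ is $0$-regular), then \emph{no} admissible hyperplane passes through $p$. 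So the admissible hyperplanes do not ``collectively cover $X$'': their union is precisely the complement of the closed associated points, and the cokernel of $H^0(F)\otimes\cO_X\to F$ could a priori be supported exactly there without meeting any admissible $H$. Your closing sentence therefore does not force that cokernel to vanish for a general coherent $F$.

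Two standard repairs. (i) Split off the maximal subsheaf $T\subset F$ supported in dimension $0$: then $F/T$ is still $0$-regular and has no closed associated points, so your covering argument does apply to it; since $H^1(X,T)=0$ the map $H^0(F)\to H^0(F/T)$ is surjective, $H^0(T)\to T_p$ is surjective, and the right-exact sequence $T_p\otimes k(p)\to F_p\otimes k(p)\to (F/T)_p\otimes k(p)\to 0$ then gives generation of $F$ at every $p$. (ii) Mumford's original route: push forward to $\pP^N$ via $|\cO_X(1)|$, use the same hyperplane induction to prove that the multiplication maps $H^0(F(m))\otimes H^0(\cO(1))\to H^0(F(m+1))$ are surjective for $m\geq 0$, combine with Serre's theorem that $F(k)$ is globally generated for $k\gg 0$, and factor the resulting surjection $H^0(F)\otimes H^0(\cO(k))\otimes\cO\to F(k)$ through $H^0(F)\otimes\cO(k)$ to conclude that $H^0(F)\otimes\cO\to F$ is already surjective. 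Note that in the paper's actual applications $F$ is a twist of a rank-$2$ bundle on a smooth variety, so it has no closed associated points and your argument as written does suffice there; but as a proof of the proposition as stated it needs one of the above patches.
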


\begin{thm}[The Serre construction]\label{Serre}(See \cite{Ar})
    Suppose $X$ is a smooth projective variety over $\C$. Let $L$ be a invertible sheaf so that $h^1(L^{-1})=0$ and $h^2(L^{-2})=0$ and $Y\subset X$ a closed subscheme of pure codimension $2$. We have an isomorphism $\Ext^1(I_Y\otimes L,\cO_X)=H^0(\cO_Y)$. The subscheme $Y$ is the zero locus of a section of a vector bundle of rank $2$ with determinant $L$ if and only if $Y$ is locally complete intersection and $\omega_Y=(\omega_X\otimes L)|_Y.$
\end{thm}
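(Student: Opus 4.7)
The plan is to derive both assertions from a local computation of $\mathcal{E}xt^\bullet(I_Y\otimes L,\cO_X)$ together with the local-to-global spectral sequence
\begin{equation*}
E_2^{p,q}=H^p(X,\mathcal{E}xt^q(I_Y\otimes L,\cO_X))\Rightarrow \Ext^{p+q}(I_Y\otimes L,\cO_X).
\end{equation*}
First I would apply $\mathcal{H}om(-,\cO_X)$ to the twisted ideal sheaf sequence
\begin{equation*}
0\to I_Y\otimes L\to L\to \cO_Y\otimes L\to 0
\end{equation*}
and use that $L$ is locally free to reduce the computation of $\mathcal{E}xt^\bullet(I_Y\otimes L,\cO_X)$ to that of $\mathcal{E}xt^\bullet(\cO_Y\otimes L,\cO_X)$. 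For $Y$ locally complete intersection of codimension two, local duality gives $\mathcal{E}xt^q(\cO_Y\otimes L,\cO_X)=0$ for $q\ne 2$ and $\mathcal{E}xt^2(\cO_Y\otimes L,\cO_X)=\omega_Y\otimes(\omega_X\otimes L)^{-1}|_Y$. Under the assumption $\omega_Y=(\omega_X\otimes L)|_Y$ this equals $\cO_Y$, hence $\mathcal{E}xt^1(I_Y\otimes L,\cO_X)\cong\cO_Y$ with the higher local Ext sheaves vanishing. Feeding this into the spectral sequence, the hypotheses $h^1(L^{-1})=0$ and $h^2(L^{-2})=0$ control the relevant edge maps and yield $\Ext^1(I_Y\otimes L,\cO_X)\cong H^0(\cO_Y)$.

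For the equivalence, the ``only if'' direction follows immediately from Proposition~\ref{hlemma}: given a rank two bundle $E$ with $\det E=L$ and a section whose zero locus is pure codimension two, the resulting sequence $0\to\cO_X\to E\to I_Y\otimes L\to 0$ resolves $I_Y$ locally by a Koszul complex on two regular parameters, so $Y$ is lci, and the equality $\omega_Y=(\omega_X\otimes L)|_Y$ then follows from adjunction and $\det E=L$. For the converse direction, the isomorphism above lets me choose the extension class $\xi\in\Ext^1(I_Y\otimes L,\cO_X)$ corresponding to $1\in H^0(\cO_Y)$, producing an exact sequence
\begin{equation*}
0\to\cO_X\to E\to I_Y\otimes L\to 0.
\end{equation*}

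The step I expect to be most delicate is verifying that $E$ is locally free. On $X\setminus Y$ the map $I_Y\otimes L\to L$ is an isomorphism and the sequence splits, so $E\cong\cO_X\oplus L$ there. Near a point $y\in Y$, the chosen class $\xi$ corresponds under the identification $\mathcal{E}xt^1(I_Y\otimes L,\cO_X)_y\cong(\cO_Y)_y$ to the unit $1$, so locally the extension realizes the Koszul extension associated to two regular parameters cutting out $Y$, whose middle term is a free $\cO_{X,y}$-module of rank two by a direct computation. This yields local freeness of $E$ along $Y$ and completes the argument; the main work is this stalkwise identification of $\xi$ with the canonical generator of the local Ext, which is what makes the lci and adjunction hypotheses exactly sharp for the construction to produce a bundle rather than merely a torsion-free sheaf.
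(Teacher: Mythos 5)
The paper offers no proof of this statement: it is quoted as a known theorem with a pointer to Arrondo's note (which in turn follows Hartshorne), so there is no internal argument to compare yours against. Your sketch is the standard proof of the Hartshorne--Serre correspondence in codimension two, and its architecture is correct: reducing $\mathcal{E}xt^q(I_Y\otimes L,\cO_X)$ to $\mathcal{E}xt^{q+1}(\cO_Y\otimes L,\cO_X)$, identifying $\mathcal{E}xt^2(\cO_Y,\cO_X)\cong\det N_{Y/X}\cong\omega_Y\otimes\omega_X^{-1}|_Y$ for $Y$ lci of codimension two via the Koszul resolution, deducing the ``only if'' direction from the Koszul sequence together with adjunction and $\det E=L$, and proving local freeness of the extension by checking that the image of the chosen class in each stalk of $\mathcal{E}xt^1(I_Y\otimes L,\cO_X)\cong\cO_Y$ is a unit, so that the extension is stalkwise the Koszul extension.

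Two points need attention before this is a complete proof of the statement as written. First, since $\mathcal{H}om(I_Y\otimes L,\cO_X)\cong L^{-1}$ for $Y$ of codimension at least two, the five-term exact sequence of your spectral sequence reads $0\to H^1(L^{-1})\to\Ext^1(I_Y\otimes L,\cO_X)\to H^0(\cO_Y)\to H^2(L^{-1})$; the obstruction to lifting $1\in H^0(\cO_Y)$ to a global extension class therefore lives in $H^2(L^{-1})$, whereas the hypothesis you are handed is $h^2(L^{-2})=0$. Your phrase ``the hypotheses control the relevant edge maps'' papers over this mismatch: you must either argue that $h^2(L^{-2})=0$ forces the relevant differential to vanish in your setting, or record that the vanishing actually used is $h^2(L^{-1})=0$ (in the paper's one application of this theorem, inside Theorem~\ref{auto}, one has $L=\cO_{V_5}(4)$ and both groups vanish, so nothing downstream is affected). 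Second, your derivation of $\Ext^1(I_Y\otimes L,\cO_X)\cong H^0(\cO_Y)$ invokes the lci and adjunction hypotheses, while the statement asserts this isomorphism for an arbitrary subscheme of pure codimension two; you should either prove it in the form $\Ext^1(I_Y\otimes L,\cO_X)\cong H^0(\omega_Y\otimes(\omega_X\otimes L)^{-1}|_Y)$, valid for $Y$ Cohen--Macaulay, or note explicitly that you establish it only under the additional hypotheses, which is all the paper ever uses.
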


\subsection{Derived Categories} Let $X$ be an algebraic variety over $\C$, we use $\mathcal{D}^b(X)$ to denote the bounded derived categories of coherent sheaves on $X$. We denote $\Ext^p(F,G)=\Hom(F,G[p])$ and $\Ext^\bullet(F,G)=\oplus_{p\in\Z}\Ext^p(F,G)[-p]$. Recall that objects $E_1,\ldots,E_n$ in the bounded derived category of coherent sheaves $\mathcal{D}^b(X)$ forms a full exceptional collection if 
    \begin{enumerate}
          \item $\Hom(E_i,E_i[m])=\C$ if $m=0$ and is $0$ otherwise;
          \item $\Hom(E_i,E_j[m])=0$ for all $m\in\Z$ if $j<i$;
          \item The smallest triangulated subcategory of $\mathcal{D}^b(X)$ containing $E_1,\ldots,E_n$ is itself.
    \end{enumerate}
     An exceptional collection is strong if in addition:
    $\Hom(E_i,E_j[m])=0$ for all $i,j$ if $m\neq 0$. \\
    On a Fano variety $X$, Kodaira vanishing theorem implies:
    \begin{align*}
        H^i(X,\cO_X)=0
    \end{align*}
    for all $i>0$. Thus all line bundles on $V_5$ are exceptional objects. Moreover \cite{O} showed that $\mathcal{D}^b(V_5)$ has a full exceptional collection:
\begin{align*}
    \mathcal{D}^b(V_5)=\langle \mathcal{U}, \mathcal{Q}\smvee,\cO_{V_5},\cO_{V_5}(1)\rangle
\end{align*}
We use $\mathcal{B}_{V_5}$ to denote the triangulated subcategory $\langle \mathcal{U},\mathcal{Q}\smvee\rangle=\langle\cO_{V_5},\cO_{V_5}(1)\rangle^{\perp}$. The following result can be found in \cite{Ku2}.
\begin{lem}\label{repcompute}
    On the Fano threefold $V_5$ of index $2$, degree $5$ and Picard rank $1$, 
    we have canonical isomorphism
    \begin{align*}
         \Ext^\bullet(\mathcal{U},\mathcal{Q}\smvee)=A
    \end{align*}
    Here $A$ is the $3$-dimensional subspace of two forms we chose to define $V_5$ in Section 2.1. As a result, we have an equivalence of category
    \begin{align*}
        \Psi:\mathcal{B}_{V_5}\simeq \mathcal{D}^b(Q_3).
    \end{align*}
    where $\mathcal{D}^b(Q_3)$ is the derived category of finite dimensional representations of the Kronecker quiver with $2$ vertices and $3$ arrows from the first vertex to the second.
\end{lem}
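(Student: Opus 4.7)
The plan is to identify $\Hom(\mathcal{U}, \mathcal{Q}\smvee)$ with $A$ via a geometric construction, establish the vanishing of higher Ext groups through a cohomology computation, and then invoke Bondal's tilting theorem to produce the derived equivalence.

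First, I would construct a canonical map $\varphi\colon A \to \Hom(\mathcal{U}, \mathcal{Q}\smvee)$ directly from the definition of $V_5$. Each $\alpha \in A \subset \Lambda^2 V^*$ is a skew bilinear form on $V$ and hence defines a linear map $V \to V^*$; restricting to $\mathcal{U} \hookrightarrow V \otimes \cO_{V_5}$ gives a morphism $\mathcal{U} \to V^* \otimes \cO_{V_5}$. The defining condition $V_5 \subset \pP(A^\perp)$ is precisely $\alpha|_{\mathcal{U}} = 0$, so this morphism factors through the kernel $\mathcal{Q}\smvee \hookrightarrow V^* \otimes \cO_{V_5}$ of the surjection $V^* \otimes \cO_{V_5} \twoheadrightarrow \mathcal{U}\smvee$ obtained by dualizing the tautological sequence. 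This yields the canonical $\varphi$.

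Next I would compute the full Ext space $\Ext^i(\mathcal{U}, \mathcal{Q}\smvee) = H^i(V_5, \mathcal{U}\smvee \otimes \mathcal{Q}\smvee)$ by tensoring the dual tautological sequence $0 \to \mathcal{Q}\smvee \to V^* \otimes \cO_{V_5} \to \mathcal{U}\smvee \to 0$ with $\mathcal{Q}\smvee$. Using the vanishings $H^*(V_5, \mathcal{Q}\smvee) = 0$ from the stated cohomology lemma, the associated long exact sequence gives $H^i(\mathcal{U}\smvee \otimes \mathcal{Q}\smvee) \cong H^{i+1}(\mathcal{Q}\smvee \otimes \mathcal{Q}\smvee)$ for every $i$. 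I would then analyze the right-hand side via the decomposition $\mathcal{Q}\smvee \otimes \mathcal{Q}\smvee = S^2 \mathcal{Q}\smvee \oplus \Lambda^2 \mathcal{Q}\smvee$ with $\Lambda^2 \mathcal{Q}\smvee \cong \mathcal{Q}(-1)$ (since $\det \mathcal{Q}\smvee \cong \cO_{V_5}(-1)$), and reduce the required statements $h^j(\mathcal{Q}\smvee \otimes \mathcal{Q}\smvee) = 0$ for $j \neq 1$ and $h^1 = 3$ to standard cohomology calculations on $V_5$, either through Bott's theorem applied on $\Gr(2,V)$ combined with the Koszul resolution of $V_5$, or through iterated short exact sequences built from twists of $\mathcal{U}$ and $\mathcal{Q}\smvee$. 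Matching the resulting three-dimensional $\Hom$ space with $A$ through injectivity of $\varphi$ then completes the first assertion.

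Once this is established, $E := \mathcal{U} \oplus \mathcal{Q}\smvee$ is a tilting generator of $\mathcal{B}_{V_5}$: it generates by the very definition $\mathcal{B}_{V_5} = \langle \mathcal{U}, \mathcal{Q}\smvee\rangle$, and the total $\Ext^\bullet$ algebra between its summands is concentrated in degree $0$---using the given exceptional collection to kill $\Ext^\bullet(\mathcal{Q}\smvee, \mathcal{U})$ and the first part to compute $\Ext^\bullet(\mathcal{U}, \mathcal{Q}\smvee) = A$. The endomorphism algebra therefore equals $\C \oplus A \oplus \C$, which is precisely the path algebra of the Kronecker quiver $Q_3$ with three arrows. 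Bondal's tilting theorem then delivers the desired equivalence $\Psi = \mathrm{RHom}(E, -)\colon \mathcal{B}_{V_5} \simeq \mathcal{D}^b(Q_3)$. The main obstacle in this plan is the cohomology calculation of $\mathcal{Q}\smvee \otimes \mathcal{Q}\smvee$; all subsequent steps, including the tilting equivalence, are essentially formal once this vanishing is in hand.
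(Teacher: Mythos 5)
Your strategy is sound and, unlike the paper---which simply cites \cite{O} and \cite{Ku2} for this lemma and gives no argument---it is an actual proof outline. The construction of $\varphi\colon A\to\Hom(\mathcal{U},\mathcal{Q}\smvee)$ from the condition $V_5\subset\pP(A^\perp)$ is exactly right, as is the reduction $\Ext^i(\mathcal{U},\mathcal{Q}\smvee)\cong H^{i+1}(\mathcal{Q}\smvee\otimes\mathcal{Q}\smvee)$ and the tilting argument at the end (the pair is strong and exceptional, $\Ext^\bullet(\mathcal{Q}\smvee,\mathcal{U})=0$ from the given collection, so $\mathrm{End}(\mathcal{U}\oplus\mathcal{Q}\smvee)=\C Q_3$ and Bondal applies). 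The one substantive step you defer---$H^\bullet(V_5,S^2\mathcal{Q}\smvee)$ concentrated in degree $1$ with $h^1=3$---is correct (the Euler characteristic checks out: $\chi(\mathcal{U}\smvee\otimes\mathcal{Q}\smvee)=3$ and $\chi(\mathcal{Q}(-1))=0$) but it is the entire content of the lemma, and the route you propose through Borel--Bott--Weil on $\Gr(2,V)$ plus the Koszul resolution is more work than necessary. A shortcut that stays entirely within the cohomology table already quoted in Section 2.1: filter $\Lambda^2V^*\otimes\cO_{V_5}=\Lambda^2(V^*\otimes\cO_{V_5})$ by the subbundle $\mathcal{Q}\smvee$, giving graded pieces $\Lambda^2\mathcal{Q}\smvee\cong\mathcal{Q}(-1)$, $\mathcal{U}\smvee\otimes\mathcal{Q}\smvee$, and $\Lambda^2\mathcal{U}\smvee\cong\cO_{V_5}(1)$. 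Since $H^\bullet(\mathcal{Q}(-1))=0$ by Serre duality and $H^\bullet(\cO_{V_5}(1))=\C^7$ in degree $0$, the two short exact sequences identify $\Ext^\bullet(\mathcal{U},\mathcal{Q}\smvee)=H^\bullet(\mathcal{U}\smvee\otimes\mathcal{Q}\smvee)$ with the kernel of the restriction map $\Lambda^2V^*\to H^0(\cO_{V_5}(1))$, which is exactly $A$ because $V_5$ spans $\pP(A^\perp)$. This simultaneously kills the higher Ext groups and makes the isomorphism with $A$ canonical, so you no longer need to argue injectivity of $\varphi$ and count dimensions separately.
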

For $F\in \mathcal{B}_{V_5}$, $\Psi(F)$ is the representation $(M_1^\bullet,M_2^\bullet)$ with
\begin{align*}
    M_1^\bullet&=\Ext^\bullet(F,\mathcal{U}[1])^*\\
    M_2^\bullet&=\Ext^\bullet(\mathcal{Q}\smvee,F).
\end{align*}.
\subsection{Quivers representations and their moduli} A quiver $Q$ is given by two sets $Q_{vx}$ and $Q_{ar}$, where the first set is the set of vertices and the second is the set of arrows, along with two functions $s,t:Q_{ar}\to Q_{vx}$ specifying the source and target of an arrow. The path algebra $\C Q$ is the associative $\C$-algebra whose underlying vector space has a basis consisting of elements of $Q_{ar}$. The product of two basis elements is defined by concatenation of paths if possible, otherwise $0$. The product of two general elements is defined by extending the above linearly.\
    
Let $Q$ be a quiver. A quiver representation $R=(R_v,r_a)$ consists of a vector space $R_v$ for each $v\in Q_{vx}$ and a morphism of vector spaces $r_a:R_{s(a)}\to R_{t(a)}$ for each $a\in Q_{ar}$. A subrepresentation of $R$ is a pair $R'=(R_v',r_a')$ where $R_v'$ is a subspace of $R_v$ for each $v\in Q_{vx}$ and $r'_a:R'_{s(a)}\to R'_{t(a)}$ is a morphism of vector spaces for each $a\in Q_{ar}$ such that $$r'_a=r_a|_{R'_{s(a)}}$$ and 
    \begin{equation}\label{subset}
        r_a(R'_{s(a)})\subset R'_{t(a)}.
    \end{equation}
    Thus we have the commutative diagram
    \[ \begin{tikzcd}
      R'_i \arrow[r,"r'_a"] \arrow{d}{\iota_i} & R'_j \arrow{d}{\iota_j} \\%
      R_i \arrow{r}{r_a}& R_j
      \end{tikzcd}
      \]
    for any arrow $a$ from $i$ to $j$.
    We use $R'\subset R$ to denote that $R'$ is a subrepresentation of $R$.

    Given a quiver $Q$, a weight is an element $\Theta\in\Z^N$ where $N=|Q_{vx}|$.
    For a weight $\Theta$, the weight function is defined by:
    \begin{equation*}
        \Theta(S)=\sum_{i=1}^N d_i \Theta_i,
    \end{equation*}
    where $S$ is a representation of $Q$ and $d_i$ and $\Theta_i$ are the $i$-th entries of $\vec{d}$ and $\Theta$ respectively. We recall the definition of semi-stability:
    \begin{defn}
        A representation $R$ is \emph{$\Theta$-semistable} if for any subrepresentation $R'\subset R$
    \begin{equation*}
        \Theta(R')\geq\Theta(R)
    \end{equation*}
    $R$ is \emph{$\Theta$-stable} if all the above inequalities are strict.
    \end{defn}
    
    Let $\vec{d}$ be a dimension vector, the set of representations of $Q$ with dimension vector $\vec{d}$ forms an affine space $\mathrm{Rep(Q)}_{\vec{d}}$. For a weight $\Theta$, the set of $\Theta$-semistable representations forms an open subscheme $\mathrm{Rep(Q)}^{\Theta-ss}_{\vec{d}}$ of $\mathrm{Rep(Q)}_{\vec{d}}$, the set of $\Theta$-stable representations forms an open subscheme $\mathrm{Rep(Q)}^{\Theta-s}_{\vec{d}}$ of $\mathrm{Rep(Q)}^{\Theta-ss}_{\vec{d}}$.
    
    The group $G_0=\Pi_iGL(d_i)$ acts by incidence on $\mathrm{Rep(Q)}$, in other words, it acts by $(g\cdot a)=g_{t(a)}r_ag^{-1}_{s(a)}$. Apparently, the diagonal subgroup $\C_{\text{diag}}^*$ of $(\C^*)^{Q_{vx}}$ consisting of elements of the form $(k,k,\ldots,k)$ for $k\in\C^*$ acts trivially on $\mathrm{Rep(Q)}$. So it is natural to only consider the action of $G=\Pi_iGL(d_i)/\C^*_{\mathrm{diag}}$.
    \begin{defn}
      Two representations with dimension vector $\vec{d}$ are \emph{isomorphic} if they are in the same orbit under the action of $G$.\
    \end{defn}

    Give a weight $\Theta$, we may interpret $\Theta$ as a multiplicative character of the group $G$ by the formula $g\mapsto \mathrm{det}(g_1)^{\Theta_1}\ldots\mathrm{det}(g_n)^{\Theta_n}$. The moduli space of $\Theta$-semistable representations with dimension vector $\vec{d}$ is the GIT quotient
    \begin{align*}
        M^{\Theta-ss}_{\vec{d}}:&=\mathrm{Rep(Q)}_{\vec{d}}//_\Theta G.
    \end{align*}
    We mention a few facts about $M_\Theta$. 
    An equivalent definition of $M^{\vec{d}-ss}$ is to consider the graded ring
       $$ B_{\Theta}= \bigoplus_{r\geq 0}B(r\Theta)$$
    where $B(r\Theta)$ is the space of $r\Theta$-semi-invariant functions in the coordinate ring of $\mathrm{Rep(Q)}$. Then the GIT quotient is defined as 
    \begin{equation*}
        M_\Theta=\Proj(B_\Theta)
    \end{equation*}
    From this definition, it is easy to see that $M_{\Theta}$ is an irreducible normal projective scheme. 

\subsection{Representations of Kronecker quiver $Q_3$}
Let $Q_3$ be the Kronecker quiver with two vertices $1,2$ and three arrows $r_1,r_2,r_3$ from $1$ to $2$. We let $\Theta=(-1,1)$ be a weight for $Q_3$. We will always use this weight when we talk about stability of quiver from now on. We know that the moduli space of $\Theta$-semistable representations of $Q_3$ with dimension vector $(1,1)$ is isomorphic to $\pP^2$. Also, it is well-known that the moduli space of $\Theta$-semistable representation of $Q_3$ with dimension vector $(2,2)$ is isomorphic to $\pP^5$. We provide a proof that will be useful later on.
\begin{prop}\label{coor}
We have
  $$M^{\Theta-ss}_{(2,2)}\cong \pP^5$$
\end{prop}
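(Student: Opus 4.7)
The strategy is to identify $M^{\Theta-ss}_{(2,2)}$ with the linear system of conics in $\pP(A^*)\cong \pP^2$ via a determinantal semi-invariant. Parametrizing a representation of dimension vector $(2,2)$ by a triple $(M_1,M_2,M_3)\in \Hom(\C^2,\C^2)^{\oplus 3}$ with the natural action of $G=(GL(2)\times GL(2))/\C^*_{\mathrm{diag}}$ by $(g_1,g_2)\cdot M_a=g_2M_ag_1^{-1}$, fix coordinates $x_1,x_2,x_3$ on $A$ dual to the three arrows and define
\begin{align*}
\Phi\colon \mathrm{Rep}(Q_3)_{(2,2)}\longrightarrow \mathrm{Sym}^2 A^*,\qquad (M_1,M_2,M_3)\longmapsto \det\bigl(x_1M_1+x_2M_2+x_3M_3\bigr).
\end{align*}
The identity $\det(g_2(\sum x_iM_i)g_1^{-1})=\det(g_2)\det(g_1)^{-1}\det(\sum x_iM_i)$ shows that each of the six coefficients of $\Phi$ (with respect to any monomial basis of $\mathrm{Sym}^2A^*$) is a semi-invariant of weight $\Theta=(-1,1)$, so $\Phi$ projectivizes to a rational map $M^{\Theta-ss}_{(2,2)}\dashrightarrow \pP(\mathrm{Sym}^2A^*)\cong \pP^5$.

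The next step is to show this rational map is everywhere defined, i.e.\ that $\Phi(R)\not\equiv 0$ whenever $R$ is $\Theta$-semistable. Assume for contradiction that $\det\bigl(\sum x_iM_i\bigr)\equiv 0$; writing the matrix entries as linear forms $a,b,c,d\in \C[x_1,x_2,x_3]$, the relation $ad=bc$ together with unique factorization in $\C[x_1,x_2,x_3]$ forces (up to rescaling) either $(b,d)=\lambda(a,c)$ or a symmetric alternative. In the first case one reads off that the $x$-independent vector $(\lambda,-1)^{T}$ lies in the kernel of every $M_i$, producing a $(1,0)$-subrepresentation with $\Theta$-weight $-1$. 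The dual alternative produces a common image line and hence a destabilizing $(2,1)$-subrepresentation. Either outcome contradicts semistability, so $\Phi$ descends to a morphism
\begin{align*}
\tilde\Phi\colon M^{\Theta-ss}_{(2,2)}\longrightarrow \pP^5.
\end{align*}

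The final step is to upgrade $\tilde\Phi$ to an isomorphism. A dimension count gives $\dim M^{\Theta-ss}_{(2,2)}=12-7=5=\dim\pP^5$, and $M^{\Theta-ss}_{(2,2)}$ is irreducible normal projective by its GIT construction, so by Zariski's main theorem it suffices to verify that $\tilde\Phi$ is bijective on closed points. Surjectivity rests on the classical fact that every plane conic admits a $2\times 2$ linear determinantal representation (smooth conics by the standard construction, and the two types of singular conics by explicit normal forms). Injectivity on the stable locus, where the determinant is a smooth conic, follows from the uniqueness of such a representation up to the $G$-action. The main obstacle I foresee is injectivity at the strictly semistable points: two non-isomorphic representations may share the same degenerate determinantal quadric, and I will need to check that in each case they are $S$-equivalent, hence represent the same point of the GIT quotient. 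A cleaner alternative route is to compute the graded ring $B_\Theta=\bigoplus_{r\geq 0}B(r\Theta)$ of semi-invariants directly and show that it is the polynomial ring $\C[z_0,\ldots,z_5]$ generated by the six coefficients of $\Phi$ (all in degree one), from which $M^{\Theta-ss}_{(2,2)}=\mathrm{Proj}(B_\Theta)\cong \pP^5$ is immediate.
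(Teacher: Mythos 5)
Your proposal is correct in outline, and it actually contains two routes: your ``cleaner alternative'' at the end is precisely the paper's proof, while your primary, geometric route is genuinely different. The six coefficients of $\det(x_1M_1+x_2M_2+x_3M_3)$ are exactly the six semi-invariants the paper writes down ($\det(Y_i)$ and $\det(Y_i+Y_j)-\det(Y_i)-\det(Y_j)$); the paper then simply cites Domokos [Do, Corollary 5.1] for the facts that these generate the full ring of semi-invariants $\bigoplus_r B(r\Theta)$ and that they generate a polynomial ring, whence $\Proj(B_\Theta)\cong\pP^5$ in one line. Your primary route trades that citation for geometry: the nonvanishing of $\det(\sum x_iM_i)$ on the semistable locus is argued correctly (the UFD step needs the small extra case where some matrix entries vanish identically, but it is handled the same way), the dimension count and the appeal to normality plus Zariski's main theorem are fine in characteristic $0$, and surjectivity via determinantal representations of conics is standard. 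The one genuine obligation you flag yourself --- injectivity over the discriminant --- is real and not cosmetic: you must show that every semistable representation whose determinant is $\ell_1\ell_2$ is $S$-equivalent to $\mathrm{diag}(\ell_1,\ell_2)$ (equivalently, that no stable representation has singular determinant conic); this can be done by evaluating $\sum x_iM_i$ at the singular point of the conic and extracting a $(1,1)$-subrepresentation, an argument of the same flavor as your nonvanishing step. What each approach buys: the paper's is shorter but rests entirely on an external invariant-theoretic computation, while yours makes the identification $M_{rep}\cong\pP(\mathrm{Sym}^2A^*)$ geometrically transparent --- which is useful later in the paper, e.g.\ in the lemma showing that an extension of $I_{l_1}$ by $I_{l_2}$ and the direct sum $I_{l_1}\oplus I_{l_2}$ map to the same point, and in interpreting the boundary strata as loci of singular conics.
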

\begin{proof}
(cf. \cite{ALB} page 2.)A representation of $Q_3$ with dimension vector $(2,2)$ is given by three $2\times 2$ matrices
    \[
    Y_i=
    \begin{bmatrix}
        a_i & b_i\\
        c_i & d_i
    \end{bmatrix}
    \]
     for $i=1,2,3$, each corresponding to the arrow $r_i$. Then the affine representation scheme $\mathrm{Rep}(Q_3)_{(2,2)}=\Spec  
      \C [a_i,b_i,c_i,d_i, i=1,2,3]$. It is easy to see that
     \begin{align*}
         \{&\mathrm{det}(Y_1),\mathrm{det}(Y_2),\mathrm{det}(Y_3),a_1d_2+a_2d_1-b_1c_2-b_2c_1,\\
         &a_1d_3+a_3d_1-b_1c_3-b_3c_1,a_3d_2+a_2d_3-b_3c_2-b_2c_3\}
     \end{align*}
     are all $\Theta$-semi-invariant functions.

     Note the last three terms come from $\mathrm{det}(Y_i+Y_j)-\mathrm{det}(Y_i)-\mathrm{det}(Y_j)$. By \cite[Corollary 5.1]{Do}, the ring of semi-invariants  $\bigoplus_{r\geq 0}B(r\Theta)$ is the subring of $
      \C [a_i,b_i,c_i,d_i, i=1,2,3]$ generated by the above six elements. Moreover the subring generated by those six elements is a polynomial ring (\cite[Corollary 5.1 (i)]{Do}. Thus $M^{\Theta-ss}_{(2,2)}=\Proj\bigoplus_{r\geq 0}B(r\Theta)\cong \pP^5$.
\end{proof}
\begin{rem}
    From now on, we use $M_{rep}$ to denote $M^{\Theta-ss}_{(2,2)}$. We use 
    \begin{align*}
        \bigg(Y_1,Y_2,Y_3\bigg)
    \end{align*}
    to denote a representation where $Y_i$ corresponds to the arrow $a_i$.
\end{rem}

\subsection{Instanton Sheaves} Let $Y$ be a Fano threefold of index $2$. By definition an (minimal) \emph{instanton bundle} is a stable vector bundle $E$ of rank $2$ with Chern classes $c_1(E)=0$, $c_2(E)=2$, enjoying the instantonic condition
\begin{align*}
    H^1(Y,E(-1))=0.
\end{align*}

We use $M$ to denote the moduli space of semistable rank $2$ sheaves with Chern classes $c_1=0$, $c_2=2$ and $c_3=0$. It is clear that the moduli space of minimal instanton bundles $MI_2(Y)$ is an open subscheme of $M$. \cite[Theorem 4.7]{Ku2} gave a concrete description of $MI_2(V_5)$ using quadric nets. But it is not clear what $MI_2(V_5)$ looks like as a variety.\\

We also recall the definition of an Ulrich bundle.
\begin{defns}
    Let $X\subset \pP^N$ be a smooth projective variety of dimension $d$. An \emph{Ulrich bundle} $E$ is a vector bundle on $X$ satisfying 
    \begin{align*}
        H^*(X,E(-t))=0
    \end{align*}
    for all $t=1,\ldots,d$.
\end{defns}
We recall a few well-known facts about Ulrich bundles that will be useful to us:
\begin{itemize}
    \item There are no Ulrich line bundles on a variety $X$ with $\mathrm{Pic}(X)=\Z\cO_X(1)$.
    \item An Ulrich bundle is semistable. If it is not stable, it is an extension of Ulrich bundles of smaller ranks. 
\end{itemize}
One consequence of the above facts is that any rank $2$ Ulrich bundle on $V_5$ is stable.
To see the relation between (minimal) instanton bundles and Ulrich bundles of rank $2$ on $V_5$, we first recall the computation:
\begin{lem}\cite{Ku2}
    Let $E$ be an (minimal) instanton bundle on a Fano threefold of index $2$. Then 
    \begin{align*}
            H^*(E(t))=0
    \end{align*}
for $t=0,-1,-2$.
\end{lem}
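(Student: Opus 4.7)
The plan is to chain together vanishing results, bootstrapping from stability, Serre duality, and the instantonic condition via restriction to a hyperplane section.

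First, I would dispose of the extreme cohomological degrees. Since $E$ is stable with $\mu(E)=0$, any nonzero map $\cO_Y \to E(-t)$ for $t\ge 0$ would destabilize, so $H^0(E(-t))=0$ for all $t\ge 0$. Because $E$ has rank $2$ and trivial determinant (using $c_1(E)=0$ and $H^1(\cO_Y)=0$ for a Fano threefold), $E\cong E^\vee$; combined with $\omega_Y=\cO_Y(-2)$, Serre duality then yields $H^3(E(t))\cong H^0(E(-t-2))^*=0$ for $t\ge -2$. The instantonic condition gives $H^1(E(-1))=0$, and a second application of Serre duality yields $H^2(E(-1))=H^1(E(-1))^*=0$. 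Hence $H^*(E(-1))=0$ completely.

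Second, I would reduce to a hyperplane section. By Serre duality $H^i(E)\cong H^{3-i}(E(-2))^*$, so showing $H^*(E)=0$ automatically gives $H^*(E(-2))=0$. For a general smooth hyperplane section $S\in |\cO_Y(1)|$, the restriction sequence
\[
0 \to E(-1) \to E \to E|_S \to 0
\]
together with $H^*(E(-1))=0$ produces isomorphisms $H^i(E)\cong H^i(E|_S)$ for every $i$, and the long exact sequence also gives $H^0(E|_S)=H^0(E)=0$. Thus the task reduces to showing $H^1(E|_S)=H^2(E|_S)=0$.

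Third, I would work on the del Pezzo surface $S$. By adjunction $\omega_S=(\omega_Y\otimes \cO_Y(S))|_S=\cO_S(-1)$. Using self-duality of $E|_S$ and Serre duality on $S$, $H^2(E|_S)\cong H^0(E|_S(-1))^*$. To kill this, cut $S$ by a further general hyperplane section $S'$, so that $C=S\cap S'$ is a smooth curve, and consider
\[
0 \to E(-1)|_S \to E|_S \to E|_C \to 0,
\]
which embeds $H^0(E(-1)|_S)$ into $H^0(E|_S)=0$. Thus $H^2(E|_S)=0$. A Riemann-Roch computation on the surface now gives $\chi(E|_S)=2\chi(\cO_S)-c_2(E|_S)=2-2=0$, and the three facts $H^0(E|_S)=H^2(E|_S)=0$ together with $\chi=0$ force $H^1(E|_S)=0$.

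The main obstacle is isolating the middle-degree vanishing $H^1(E)=H^2(E)=0$; the rest follows almost formally from stability, self-duality, and the hypothesis. Pinning down these middle cohomologies requires the two-step restriction cascade, first to a del Pezzo surface to convert the question to one about $H^1$ and $H^2$ on $S$, then to a curve section to bootstrap vanishing of the auxiliary $H^0(E(-1)|_S)$ via injectivity into $H^0(E|_S)$.
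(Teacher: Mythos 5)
The paper gives no proof of this lemma --- it is quoted from [Ku2] --- so there is nothing internal to compare against; your argument is a correct and complete self-contained derivation, and it follows the standard route (stability kills $H^0$, Serre duality plus self-duality $E\cong E^{\smvee}$ handles $H^3$ and reduces $t=0$ to $t=-2$ and $H^2(E(-1))$ to the instantonic hypothesis, and the restriction sequence to a del Pezzo hyperplane section together with $H^*(E(-1))=0$, $\chi(E|_S)=0$ and one more cut to a curve disposes of the middle cohomology of $E$ itself). Two cosmetic remarks: the self-duality of $E$ follows from $\mathrm{rank}\,E=2$ and $\det E\cong\cO_Y$ alone (the vanishing $H^1(\cO_Y)=0$ is not what is needed there, rather $\mathrm{Pic}(Y)=\Z$ so that $c_1(E)=0$ forces $\det E\cong\cO_Y$); and for the injection $H^0(E|_S(-1))\hookrightarrow H^0(E|_S)$ you do not need a second general hyperplane section --- multiplication by any nonzero section of $\cO_S(1)$ already gives an injective sheaf map $E|_S(-1)\to E|_S$.
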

On the other hand, Lee and Park obtained the following result in their recent paper:
\begin{prop}\cite[Proposition 3.4]{LP}
  For any $r\geq 2$, an Ulrich bundle $E$ of rank $r$ on $V_5$ corresponds to the following quiver representation.
    \begin{align*}
        E(-1)=\mathrm{Coker}(\mathcal{U}^{\oplus r}\to \mathcal{Q}\smvee^{\oplus r})
    \end{align*}
\end{prop}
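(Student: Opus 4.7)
The plan is to apply the derived equivalence $\Psi\colon\mathcal{B}_{V_5}\simeq\mathcal{D}^b(Q_3)$ of Lemma 2.6 to the twisted bundle $E(-1)$ and then invert $\Psi$ to read off the claimed cokernel presentation. First I would check that $E(-1)\in\mathcal{B}_{V_5}$. Since $\mathcal{B}_{V_5}=\langle\cO_{V_5},\cO_{V_5}(1)\rangle^{\perp}$, this amounts to verifying $H^{*}(E(-1))=H^{*}(E(-2))=0$, which is immediate from the Ulrich vanishing $H^{*}(E(-t))=0$ for $t=1,2,3$.

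Next I would show that $\Psi(E(-1))$ is a representation concentrated in cohomological degree zero, so that the corresponding object in $\mathcal{D}^b(Q_3)$ is an honest quiver representation rather than a genuine complex. Recalling that
\begin{align*}
M_1^\bullet=\Ext^\bullet(E(-1),\mathcal{U}[1])^*,\qquad M_2^\bullet=\Ext^\bullet(\mathcal{Q}\smvee,E(-1)),
\end{align*}
this reduces to the vanishings $\Ext^p(\mathcal{Q}\smvee,E(-1))=0$ for $p\neq 0$ and $\Ext^p(E(-1),\mathcal{U})=0$ for $p\neq 1$. These are computed by feeding the dualized tautological sequence $0\to\mathcal{Q}\smvee\to\cO_{V_5}^{\oplus 5}\to\mathcal{U}\smvee\to 0$ into the relevant Hom/Ext long exact sequences, using Serre duality with $\omega_{V_5}=\cO_{V_5}(-2)$ and the Ulrich vanishing for $E(-1)$, $E(-2)$, $E(-3)$ to kill every unwanted term. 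The same computation yields the dimensions
\begin{align*}
\dim\Hom(\mathcal{Q}\smvee,E(-1))=\dim\Ext^1(E(-1),\mathcal{U})=r,
\end{align*}
so that $\Psi(E(-1))$ has dimension vector $(r,r)$.

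With this data in hand, the explicit description of $\Psi^{-1}$ used in Theorem 1.3 produces a two-term complex $C\colon \mathcal{U}^{\oplus r}\xrightarrow{f}\mathcal{Q}\smvee^{\oplus r}$ (with the second term in degree $0$) that is quasi-isomorphic to $E(-1)$ in $\mathcal{D}^b(V_5)$. Since $E(-1)$ is a locally free sheaf concentrated in cohomological degree zero, the map $f$ must be injective as a morphism of sheaves, and $\mathrm{Coker}(f)\cong E(-1)$. This yields the stated presentation, with the three blocks of the matrix for $f$ being exactly the three arrow maps of the representation under $\Hom(\mathcal{U},\mathcal{Q}\smvee)=A$.

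The main obstacle is the vanishing computation in the second step, particularly on the $\mathcal{U}$-side. While $\Ext^p(\mathcal{Q}\smvee,E(-1))$ reduces through the tautological sequence directly to cohomologies of twists of $E$ inside the Ulrich range, controlling $\Ext^p(E(-1),\mathcal{U})$ requires pairing Serre duality with the Euler sequence in order to translate everything back into the groups $H^*(E(-t))$ that the Ulrich condition handles. Verifying that these cancellations leave exactly one surviving cohomological degree on each side, each of dimension $r$, is the technical heart of the argument.
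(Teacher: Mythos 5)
There is nothing in the paper to compare against here: this proposition is imported verbatim from Lee--Park \cite[Proposition 3.4]{LP} and the paper supplies no proof of it. Your outline is nevertheless the standard (and, as far as the cited source goes, the intended) route: verify $E(-1)\in\mathcal{B}_{V_5}=\langle\cO_{V_5},\cO_{V_5}(1)\rangle^{\perp}$ from the Ulrich vanishing $H^{*}(E(-t))=0$, $t=1,2,3$; show that $\Psi(E(-1))$ is an honest representation of dimension vector $(r,r)$; and invert $\Psi$ to get the two-term complex $\mathcal{U}^{\oplus r}\to\mathcal{Q}\smvee^{\oplus r}$, whose quasi-isomorphism with a sheaf in degree $0$ forces injectivity and identifies the cokernel with $E(-1)$. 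The first and last steps are complete and correct as written.

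The part you flag as the ``technical heart'' is indeed where all the content sits, and in your write-up it remains a plan rather than a proof. Two things are asserted without being carried out: (i) the vanishing of $\Ext^{p}(E(-1),\mathcal{U})$ for $p\neq 1$, which after Serre duality and the sequence $0\to\mathcal{Q}\smvee\to\cO_{V_5}^{\oplus 5}\to\mathcal{U}\smvee\to 0$ still requires controlling terms such as $H^{*}(\mathcal{U}\smvee\otimes E(-t))$ that are not literally of the form $H^{*}(E(-t))$, so one must also use $\mathcal{U}\cong\mathcal{U}\smvee(-1)$ and iterate; and (ii) the dimension count $\dim\Hom(\mathcal{Q}\smvee,E(-1))=\dim\Ext^{1}(E(-1),\mathcal{U})=r$, which does not follow from the vanishing alone but needs $\chi(\mathcal{Q}\smvee,E(-1))$ and $\chi(E(-1),\mathcal{U})$, hence the Chern character of a rank-$r$ Ulrich bundle (e.g.\ $c_{1}(E)=rh$) fed into Riemann--Roch. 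Neither point is an error, but until they are executed the proof is an outline; the paper itself sidesteps all of this by citing \cite{LP}.
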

Combine these two results, we immediately obtain:
\begin{cor}\label{Ulrich}
    A vector bundle $E$ on $V_5$ is an (minimal) instanton bundle if and only if $E(1)$ is an Ulrich bundle of rank 2.
\end{cor}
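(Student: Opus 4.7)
The plan is to prove the equivalence by two direct cohomological translations: the forward direction reduces to relabeling indices in Lemma 2.12, while the converse uses Proposition 2.13 of Lee--Park to pin down the Chern classes.

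For the implication $E$ minimal instanton $\Rightarrow E(1)$ Ulrich of rank $2$, I would simply reindex. Lemma 2.12 states that $H^*(V_5, E(t)) = 0$ for $t = 0, -1, -2$. Setting $s = 1-t$, this reads $H^*(V_5, E(1)(-s)) = 0$ for $s = 1, 2, 3 = \dim V_5$, which is exactly the Ulrich condition on $V_5$. Since $E(1)$ is visibly a rank-$2$ vector bundle, it is then an Ulrich bundle of rank $2$.

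For the converse, assume $E(1)$ is a rank-$2$ Ulrich bundle. I would verify the defining properties of a minimal instanton bundle one at a time. Rank $2$ and local freeness pass from $E(1)$ to $E$ by twisting. The instantonic vanishing $H^1(V_5, E(-1)) = H^1(V_5, E(1)(-2)) = 0$ is the $s = 2$ case of the Ulrich condition. Stability of $E$ follows from the stability of rank-$2$ Ulrich bundles on $V_5$ (listed among the facts recalled just before Lemma 2.12), since stability is invariant under twisting by a line bundle.

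The one step with genuine content is the determination of the Chern classes $c_1(E) = 0$ and $c_2(E) = 2$. Here I would invoke Proposition 2.13 with $r = 2$ to write $E = \mathrm{Coker}(\mathcal{U}^{\oplus 2} \to \mathcal{Q}\smvee^{\oplus 2})$. A rank count ($6 - 4 = 2$) together with the torsion-freeness of $\mathcal{U}^{\oplus 2}$ shows the map is injective, producing a short exact sequence $0 \to \mathcal{U}^{\oplus 2} \to \mathcal{Q}\smvee^{\oplus 2} \to E \to 0$. Then $c(E) = c(\mathcal{Q}\smvee^{\oplus 2})/c(\mathcal{U}^{\oplus 2})$; substituting $c(\mathcal{U}) = 1 - h + 2l$ and $c(\mathcal{Q}\smvee) = 1 - h + 3l - p$ from Section~2.1 and using the intersection relations $h^2 = 5l$ and $hl = p$, a short calculation yields $c_1(E) = 0$, $c_2(E) = 2l$, and $c_3(E) = 0$. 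No step poses a real obstacle; the only place requiring attention is this final Chern-class bookkeeping.
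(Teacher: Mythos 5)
Your proposal is correct and follows exactly the route the paper intends: the paper's "proof" is just the remark "combine these two results," and your elaboration uses Lemma 2.12 for the forward direction (a pure reindexing of the vanishing range) and Proposition 2.13 together with the recalled stability facts for the converse, with the Chern-class computation from the resolution $0\to\mathcal{U}^{\oplus 2}\to\mathcal{Q}\smvee^{\oplus 2}\to E\to 0$ coming out as you state ($c_1=0$, $c_2=2l$, $c_3=0$). No gaps.
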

In addition, \cite{LP} described the moduli space of stable Ulrich bundles of rank $r$.
\begin{thm}\cite[Theorem 3.11]{LP}
    The moduli space $M^{sU}_r$ of stable Ulrich bundles of rank $r$ on $V_5$  is a smooth $(r^2+1)$-dimensional open subscheme of $M^{(-1,1)-s}_{(r,r)}(Q_3)$
\end{thm}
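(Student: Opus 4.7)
The plan is to use the equivalence $\Psi\colon \mathcal{B}_{V_5} \simeq \mathcal{D}^b(Q_3)$ to identify stable Ulrich bundles of rank $r$ with an open part of the moduli of $(-1,1)$-stable representations with dimension vector $(r,r)$. Starting from a stable rank $r$ Ulrich bundle $E$, the Ulrich vanishings give $H^*(V_5, E(-1)) = 0$ and $H^*(V_5, E(-2)) = H^*(V_5, E(-1) \otimes \cO_{V_5}(-1)) = 0$, which is exactly the statement that $E(-1) \in \langle \cO_{V_5}, \cO_{V_5}(1)\rangle^\perp = \mathcal{B}_{V_5}$. Applying $\Psi$ and using the short exact sequence $0 \to \mathcal{U} \to \C^5 \otimes \cO_{V_5} \to \mathcal{Q} \to 0$ (together with its dual, tensored with $E(-1)$), one checks that $\Psi(E(-1))$ is concentrated in cohomological degree $0$ with dimension vector $(r,r)$; the resolution $E(-1) = \mathrm{Coker}(\mathcal{U}^{\oplus r} \to \mathcal{Q}\smvee{}^{\oplus r})$ from the LP proposition cited above makes this explicit, with the three arrows encoded by $\Hom(\mathcal{U}, \mathcal{Q}\smvee) = A$.

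Next I would promote this to a morphism of moduli $\Phi \colon M^{sU}_r \to M^{(-1,1)-s}_{(r,r)}(Q_3)$ and verify it is an open immersion. Stability of the representation follows because a destabilizing subrepresentation would pull back under $\Psi$ to a subobject of $E(-1)$ in $\mathcal{B}_{V_5}$ whose reduced Hilbert polynomial contradicts stability of $E$ as a sheaf. Injectivity on closed points is immediate since $\Psi$ is an equivalence. To identify the image with an open subscheme, I would show that the locus of stable $R$ for which the Kronecker complex $\mathcal{U}^{\oplus r} \xrightarrow{f_R} \mathcal{Q}\smvee{}^{\oplus r}$ is injective with locally-free rank-$r$ cokernel is open, and that on this locus the Ulrich cohomological vanishings are automatic, using the vanishings in the preliminary lemma on $H^*(V_5, \mathcal{U}(j))$ and $H^*(V_5, \mathcal{Q}\smvee(j))$ combined with the long exact sequences of the defining resolution.

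Smoothness and the dimension count then fall out of the quiver picture: $\dim \mathrm{Rep}(Q_3)_{(r,r)} = 3r^2$, the gauge group $(GL_r \times GL_r)/\C^*_{\mathrm{diag}}$ has dimension $2r^2 - 1$, and the action is free at a stable point, giving dimension $3r^2 - (2r^2-1) = r^2 + 1$. Smoothness is automatic because $Q_3$ has global dimension one, so the obstruction $\Ext^2(R,R)$ vanishes for every representation; equivalently, one can check directly that $\Ext^2(E,E)=0$ for a stable Ulrich $E$ by Serre duality and the Ulrich vanishings. The main obstacle will be the careful openness/surjectivity analysis in the second step: one has to show precisely that a $(-1,1)$-stable $R$ with injective $f_R$ and locally free cokernel of the correct rank produces a genuine Ulrich bundle, and not merely a semistable sheaf with the right Chern classes. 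This is where the specific cohomological vanishings for $\mathcal{U}(j)$ and $\mathcal{Q}\smvee(j)$ recorded in the preliminaries get used essentially.
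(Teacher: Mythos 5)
This statement is quoted from \cite[Theorem 3.11]{LP} and is not proved in the present paper, so there is no internal proof to compare against; your sketch follows the same route as the cited source and as the paper's own parallel development for $r=2$ (Theorem \ref{rep2} and Propositions \ref{p1}, \ref{p2}, \ref{ss}): Ulrich vanishing for $t=1,2$ places $E(-1)$ in $\mathcal{B}_{V_5}=\langle\cO_{V_5},\cO_{V_5}(1)\rangle^{\perp}$, the equivalence $\Psi$ and the resolution $\mathcal{U}^{\oplus r}\to\mathcal{Q}\smvee^{\oplus r}$ produce a representation with dimension vector $(r,r)$, sheaf stability transfers to $(-1,1)$-stability, and smoothness together with the count $3r^2-(2r^2-1)=r^2+1$ come from the hereditary quiver side. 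In that respect the outline is the intended one.

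One step is stated too optimistically, however: the claim that on the locus of stable $R$ with $f_R$ injective and locally free rank-$r$ cokernel ``the Ulrich cohomological vanishings are automatic'' from the preliminary cohomology lemma. The vanishings $H^*(E(-1))=H^*(E(-2))=0$ are indeed immediate, since all the relevant groups $H^*(\mathcal{U}(j))$, $H^*(\mathcal{Q}\smvee(j))$ for $j=0,-1$ vanish. But the third Ulrich condition $H^*(E(-3))=0$ is not automatic: the long exact sequence leaves you with the map $H^3(\mathcal{U}(-2))^{\oplus r}\to H^3(\mathcal{Q}\smvee(-2))^{\oplus r}$ between two copies of $\C^{5r}$, and you must prove this is an isomorphism. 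Since $\chi(E(-3))=0$ and $h^0(E(-3))=h^1(E(-3))=0$ come for free, it suffices to show $h^3(E(-3))=h^0((E(-1))^{\vee})^*=0$, which uses slope stability of the rank-$r$, $c_1=0$ sheaf $E(-1)$ itself --- i.e.\ the sheaf-theoretic stability of the cokernel, not merely the $(-1,1)$-stability of $R$ or the vanishings in the preliminary lemma. This is the same point at which, in the paper's $r=2$ analysis, stable representations can fail to give locally free (hence Ulrich) cokernels, so the extra argument cannot be omitted.
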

By the above corollary, we obtain the following relation when $r=2$,
\begin{align*}
    MI_2(V_5)\simeq M^{sU}_2\subset M
\end{align*}

\section{Classification of rank $2$ semistable sheaves on $V_5$}

We first look at stable rank $2$ vector bundles.
    
\begin{lem}\label{D22}
    Let $S\subset \pP^5$ be a del Pezzo surface of degree $5$ and $E$ a $\mu$-semistable vector bundle of rank $2$ with Chern classes $c_1(E)=0$ and $c_2(E)=2$. If $h^0(E)=0$, then $h^1(E(n))=0$  for $n\in \Z$ and $h^2(E(n))=0$ for $n\geq-1$. If $h^0(E)\neq 0$, then $h^0(E)=1$, $h^1(E(n))=0$ for $n\leq -2$ and $n\geq 1$, $h^1(E(-1))=h^1(E)=1$ and $h^2(E(n))=0$ for $n\geq 0$.
\end{lem}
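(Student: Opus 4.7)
My plan is to treat the two cases separately: the one with a section using the Hartshorne--Serre construction (Proposition \ref{hlemma}), and the one without using Mumford--Castelnuovo (Proposition \ref{mumford}).

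First, some common setup. Since $S$ is rational, $\mathrm{Pic}^0(S)=0$ and the hypothesis $c_1(E)=0$ forces $\det E\cong\cO_S$, hence $E\smvee\cong E$. With $\omega_S=\cO_S(-1)$, Serre duality becomes $h^i(E(n))=h^{2-i}(E(-n-1))$. Riemann--Roch on $S$ (using $K_S=-H$ and $H^2=5$) gives $\chi(E(n))=5n(n+1)$. Because $E(m)$ is $\mu$-semistable of slope $m$, I would note that $h^0(E(m))=0$ whenever $m<0$; Serre duality then yields $h^2(E(n))=0$ for $n\geq 0$.

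In the case $h^0(E)\neq 0$, I would observe that no section of $E$ can have a divisorial zero locus: an effective $D>0$ with $\cO_S(D)\hookrightarrow E$ would give $\mu(\cO_S(D))>0$, destabilizing $E$. Proposition \ref{hlemma} then produces
\begin{align*}
    0\to \cO_S\to E\to I_Z\to 0,\qquad \ell(Z)=c_2(E)=2.
\end{align*}
Feeding this sequence and $0\to I_Z\to\cO_S\to\cO_Z\to 0$ through the long exact sequence and invoking Kodaira vanishing (since $-K_S=H$ is ample) reads off $h^0(E)=1$, $h^1(E)=h^1(I_Z)=1$, $h^2(E)=0$. For $n\geq 1$, twisting by $\cO_S(n)$ and using that $\cO_S(1)$ is very ample (so every length-two subscheme imposes two independent conditions on $|nH|$) gives $h^1(I_Z(n))=h^2(I_Z(n))=0$, hence $h^1(E(n))=h^2(E(n))=0$. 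Serre duality then supplies the $n=-1$ values and the vanishing $h^1(E(n))=0$ for $n\leq -2$.

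In the case $h^0(E)=0$, the assumption extends the slope-based vanishing to $h^0(E(m))=0$ for all $m\leq 0$, so by Serre duality $h^2(E(n))=0$ for $n\geq -1$. In particular $h^2(E)=0$, and $\chi(E)=0$ then forces $h^1(E)=0$. I would then apply Proposition \ref{mumford} to the sheaf $E(1)$: its hypotheses $h^1(E(1)(-1))=h^1(E)=0$ and $h^2(E(1)(-2))=h^2(E(-1))=h^0(E)^*=0$ are both in hand, so $h^1(E(n))=0$ for every $n\geq 0$. Serre duality propagates this to all $n\in\Z$.

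The main technical step is the separation statement in the first case: the surjectivity $H^0(\cO_S(n))\twoheadrightarrow H^0(\cO_Z)$ for $n\geq 1$ and every length-two subscheme $Z$, which is immediate from very ampleness of $\cO_S(1)$. Granted this, everything else is Riemann--Roch, Serre duality, and Kodaira vanishing bookkeeping on the del Pezzo.
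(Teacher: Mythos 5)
Your proof is correct and follows essentially the same route as the paper: the dichotomy on $h^0(E)$, the Hartshorne--Serre sequence $0\to\cO_S\to E\to I_Z\to 0$ with the separation-of-points argument for $h^1(I_Z(n))=0$, and Euler-characteristic plus Serre-duality bookkeeping. The only cosmetic difference is that the paper finishes the positive twists by applying Mumford--Castelnuovo to $E(2)$ after checking only $n=1$, whereas you verify the independent-conditions statement for all $n\geq 1$ directly and then use Serre duality for the negative twists.
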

\begin{proof}
    See \cite[Lemma 2.2]{D}. Suppose $h^0(E)=0$. Then we have $h^1(E)=h^0(E)=0$ since $h^2(E)=h^0(E(-1))=0$ and $\chi(E)=0$. Also $h^2(E(-1))=h^0(E)=0$. Now that we have $h^i(E(1-i))=0$ for $i\geq 1$, we can apply Proposition \ref{mumford} to $E(1)$ and obtain the first half of the lemma.\ 
    
    Suppose $h^0(E)\neq 0$. Then the zero locus of a nonzero global section is nonempty and of pure codimension $2$ by \cite{D} Lemma 2.1. We have an exact sequence by Proposition \ref{hlemma}:
    \begin{align*}
        0\to \cO_S\to E\to I_Z\to 0,
    \end{align*}
    where $Z$ is an closed subscheme of dimension $0$ and length $2$. We have $h^0(E)=1$ and $h^1(E)=1$ since $h^2(E)=h^0(E(-1))=0$ and $\chi(E)=0$. We claim the natural map $\alpha:H^0(\cO_S(1))\to H^0(\cO_Z(1))$ is surjective. Note $l(Z)=2$, if $Z$ consists of two distinct points $p_1,p_2$, one can find two hyperplanes of $\pP^5$ contained one of $p_i$ while avoiding the other, this shows $\alpha$ is surjective in this case. Similarly, if $Z$ consists of a closed point $q$ and a tangent direction at it, we can find a hyperplane not passing through $q$ and one hyperplane passing through $q$ while not containing the tangent direction. As a result of the surjectivity of $\alpha$, we have $h^1(I_Z(1))=0$. Then $h^1(E(1))=0$. Now that we have $h^i(E(2-i))=0$ for $i\geq 1$ and the second half of the lemma follows from apply Proposition \ref{mumford} to $E(2)$.
\end{proof}

\begin{lem}\label{D23}
    Let $S\subset \pP^5$ be a del Pezzo surface of degree $5$ and $E$ a $\mu$-semistable vector bundle of rank $2$ with Chern classes $c_1(E)=0$ and $c_2(E)=1$.  If $h^0(E)\neq 0$, then $h^0(E)=1$, $h^1(E(n))=0$ for $n\in\Z$ and and $h^2(E(n))=0$ for $n\geq 0$.
\end{lem}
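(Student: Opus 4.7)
The plan is to mirror the argument of Lemma \ref{D22}, taking advantage of the fact that with $c_2(E)=1$ the zero scheme of a section becomes a single reduced point, which streamlines the key cohomological step. First, starting from a nonzero section $s\in H^0(E)$, I would show that the zero scheme $Z(s)$ is zero-dimensional and of length $1$. Any divisorial component would give a saturation $\cO_S(D)\hookrightarrow E$ with $D$ effective and nonzero, forcing $D\cdot H\le 0$ by $\mu$-semistability, which is impossible. If $Z(s)$ were empty, $s$ would trivialize $E$ as $\cO_S\oplus\cO_S$, incompatible with $c_2(E)=1$. Proposition \ref{hlemma} then produces the short exact sequence
\begin{align*}
0\to \cO_S\to E\to I_Z\to 0
\end{align*}
with $Z$ of length $c_2(E)=1$, i.e.\ a single reduced point.

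Next I would extract the small-twist cohomology. Riemann--Roch gives $\chi(E)=2\chi(\cO_S)-c_2(E)=1$. Serre duality combined with $\mu$-semistability yields $h^2(E)=h^0(E(-1))=0$ (a nonzero section of $E(-1)$ would produce $\cO_S\subset E(-1)$ of slope $0>\mu(E(-1))$, contradicting semistability). Taking cohomology of the displayed sequence with $H^0(I_Z)=0$ gives $h^0(E)=1$, and hence $h^1(E)=0$. To obtain $h^1(E(1))=0$, I would twist the sequence by $\cO_S(1)$; using $h^1(\cO_S(n))=0$ for all $n\in\Z$ on the degree-$5$ del Pezzo surface, and the evident surjectivity of the evaluation map $H^0(\cO_S(1))\twoheadrightarrow H^0(\cO_Z(1))=\C$ at the single point $Z$, one gets $h^1(I_Z(1))=0$ and therefore $h^1(E(1))=0$.

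At this stage $h^i(E(2)(-i))=0$ holds for all $i\ge 1$, so the Mumford--Castelnuovo criterion (Proposition \ref{mumford}) applied to $F=E(2)$ delivers $h^1(E(n))=0$ for $n\ge 1$ together with $h^2(E(n))=0$ for $n\ge 0$. The remaining negative twists for $h^1$ follow from Serre duality $h^1(E(n))=h^1(E(-n-1))$, valid since $E^\vee\cong E$ and $K_S=-H$; this reduces every $n\le -1$ to a nonnegative value already handled. I anticipate no genuine obstacle here beyond faithfully adapting the previous argument: the only substantive point---surjectivity of the evaluation map $H^0(\cO_S(1))\to H^0(\cO_Z(1))$---is immediate in this case because $Z$ is a single reduced point, so this lemma is slightly easier than Lemma \ref{D22}.
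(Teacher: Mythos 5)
Your proposal is correct and follows essentially the same route as the paper, which simply refers back to the second case of Lemma \ref{D22}: produce the extension $0\to\cO_S\to E\to I_Z\to 0$ from a nonzero section, use $\chi(E)=1$ and $h^2(E)=h^0(E(-1))=0$ to get $h^0(E)=1$ and $h^1(E)=0$, check surjectivity of $H^0(\cO_S(1))\to H^0(\cO_Z(1))$ (trivial here since $l(Z)=1$) to get $h^1(E(1))=0$, and conclude via Mumford--Castelnuovo plus Serre duality for the negative twists.
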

\begin{proof}
    Use same arguments as in proof of second case of Lemma \ref{D22}.
\end{proof}

\begin{thm}\label{auto}
    Let $E$ be a stable rank $2$ vector bundle on $V_5$, with $c_1(E)=0$ and $c_2(E)=2$. Then $h^1(V_5,E(-1))=0$ or $1$
\end{thm}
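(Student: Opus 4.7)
The plan is to restrict $E$ to a general hyperplane section $S\in|\cO_{V_5}(1)|$, which is a smooth del Pezzo surface of degree $5$, and feed Lemma \ref{D22} into a short cohomology chase on $V_5$. First I would note that $h^0(V_5,E)=0$: by Riemann--Roch one checks $p(\cO_{V_5},n)>p(E,n)$ for $n\gg 0$, so stability of $E$ rules out any injection $\cO_{V_5}\hookrightarrow E$. Since $E$ is $\mu$-semistable and $V_5$ has Picard rank $1$, a standard restriction theorem gives $\mu$-semistability of $E|_S$ on generic $S$; a direct Chern class computation yields $c_1(E|_S)=0$ and $c_2(E|_S)=2[\text{pt}]$, so Lemma \ref{D22} applies to $E|_S$.

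For the cohomology chase, I would consider the twisted restriction sequence
\begin{equation*}
0\to E(-n-1)\to E(-n)\to E|_S(-n)\to 0
\end{equation*}
for $n\geq 1$ and its long exact cohomology
\begin{equation*}
H^0(E|_S(-n))\to H^1(E(-n-1))\to H^1(E(-n))\to H^1(E|_S(-n)).
\end{equation*}
For $n\geq 1$, $E|_S(-n)$ is $\mu$-semistable of negative slope $-5n$, so $h^0(E|_S(-n))=0$; thus the left arrow vanishes and $H^1(E(-n-1))\hookrightarrow H^1(E(-n))$ with cokernel embedding in $H^1(E|_S(-n))$. Independently, for $n\gg 0$, Serre duality $h^1(V_5,E(-n))=h^2(V_5,E(n-2))$ together with Serre vanishing gives $h^1(E(-n))=0$.

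Finally I would split according to Lemma \ref{D22}. If $h^0(E|_S)=0$, then $h^1(E|_S(m))=0$ for all $m\in\Z$, so every inclusion $H^1(E(-n-1))\hookrightarrow H^1(E(-n))$ for $n\geq 1$ is an isomorphism; combined with eventual vanishing this forces $h^1(E(-1))=0$. If $h^0(E|_S)=1$, Lemma \ref{D22} supplies $h^1(E|_S(-1))=1$ and $h^1(E|_S(-n))=0$ for $n\geq 2$, so the previous argument applied for $n\geq 2$ yields $h^1(E(-2))=0$, and the sequence at $n=1$ then gives $h^1(E(-1))\leq h^1(E(-2))+h^1(E|_S(-1))=1$. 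In both cases $h^1(E(-1))\in\{0,1\}$.

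The main obstacle is the appeal to a restriction theorem guaranteeing $\mu$-semistability of $E|_S$ on a generic hyperplane section; once this is in hand, the remainder is routine cohomology bookkeeping. A minor care point is tracking that $h^0(E|_S(-n))=0$ for $n\geq 1$, which needs only the $\mu$-semistability of $E|_S$ combined with its negative slope after twist.
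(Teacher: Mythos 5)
Your argument is correct and follows the paper's proof in all essentials: restrict to a general hyperplane section, invoke Maruyama's restriction theorem and Lemma \ref{D22}, and chase the restriction sequences $0\to E(-n-1)\to E(-n)\to E|_S(-n)\to 0$ against the eventual vanishing of $h^1(E(-n))$. The only difference is that you stop at the bound $h^1(E(-1))\leq h^1(E|_S(-1))=1$ in the second case, whereas the paper additionally shows $E(2)$ is globally generated and constructs a curve $C$ to conclude $h^1(E(-1))=1$ exactly --- material that is not needed for the statement as written but sets up the remark and Conjecture \ref{conj} that follow.
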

\begin{proof}
    
    Let $S\in |\cO_{V_5}(1)|$ be a general hyperplane section of $V_5$. Then by \cite[Theorem 3.1]{M}, $E_S$ is $\mu$-semistable with respect to the polarization $\cO_S(1)$ .\\
    Suppose $h^0(E_S)=0$. Consider the short exact sequence:
    \begin{align*}
        0\to E(n-1)\to E(n)\to E_S(n)\to 0.
    \end{align*}
    Since $h^1(E_S(n))=0$ for $n\in \Z$, we have $h^1(E(n))\leq h^1(E(n-1))$. Thus $h^1(E(n))=0$ for all $n\in\Z$ since $h^1(E(n))=0$ for $n\ll0$.\\
    Suppose $h^0(E_S)\neq 0$. We claim $E(2)$ is generated by global sections. Using the same exact sequence above, we obtain $h^1(E(-n))=0$ for $n\geq 2$. Note $h^2(E)=h^1(E(-2))=0$ and $h^3(E)=h^0(E(-2))=0$. Since $\chi(E)=0$, we have $h^1(E)=0$ and the exact sequence:
    \begin{align}\label{C1}
        0\to E\to E(1)\to E_S(1)\to 0.
    \end{align}
    gives $h^1(E(1))=h^1(E_S(1))=0$. We have then $h^3(E(-1))=h^0(E(-1))=0$. Thus $E(2)$ is generated by global sections by Mumford-Castelnuovo criterion [Proposition \ref{mumford}].\\
    If there exist a section of $E(2)$ which vanishes nowhere then $E$ is isomorphic to $\cO_{V_5}(2)\oplus\cO_{V_5}(-2)$ and $c_2(E)=-20$, which is absurd. We have then an exact sequence:
    \begin{align}\label{C2}
        0\to \cO_{V_5}(-4)\to E(-2)\to I_C\to 0
    \end{align}
    where $C\subset V_5$ is a smooth (\cite[Proposition 1.4(b)]{H1}) curve of degree $c_2(E(2))=22$. We have $h^1(I_C)=0$, so the curve $C$ is connected. We have $\omega_C=\cO_C(2)$ (see Theorem \ref{Serre}) and $g(C)=23$. Finally, the curve $C$ is non-degenerate since $E$ is stable. The exact sequence:
    \begin{align*}
        0\to \cO_{V_5}(-3)\to E(-1)\to I_C(1)\to 0
    \end{align*}
    gives $h^1(I_C(1))=h^1(E(-1))$ since $h^1(\cO_{V_5}(-3))=0$ and $h^2(\cO_{V_5}(-3))=0$. The exact sequence:
    \begin{align*}
        0\to E(-2)\to E(-1)\to E_S(-1)\to 0
    \end{align*}
    gives $h^1(E(-1))=h^1(E_S(-1))=1$.
\end{proof}
\begin{rem}
    The argument above has been applied to prove same result for stable rank $2$ vector bundles on cubic threefolds \cite{D} and degree $4$ Fano threefold $V_4$. In fact, in those cases we can show $h^1(E(-1))=0$ by proving the curve $C$ constructed as above cannot exist. Unfortunately, we could not adapt those argument to prove the same thing for our case. We make the following conjecture:
\end{rem}
\begin{conj}\label{conj}
    On $V_5$, all stable rank $2$ vector bundles $E$ with Chern classes $c_1(E)=0$ and $c_2(E)=2$ satisfies the instantonic property: 
    \begin{align*}
        H^1(V_5,E(-1))=0.
    \end{align*}
    In particular, all such $E$ are minimal instanton bundles.
\end{conj}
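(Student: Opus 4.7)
The natural strategy is to complete the proof of Theorem 3.3 by ruling out the case $h^1(E(-1)) = 1$, i.e., showing that the auxiliary curve $C$ constructed there cannot exist. Concretely, in that case the Serre construction (Theorem \ref{Serre}) presents $C$ as the zero locus of a section of the rank $2$ bundle $E(2)$, and $C \subset V_5 \subset \mathbb{P}^6$ is smooth, connected, non-degenerate, of degree $22$ and arithmetic genus $23$, with $\omega_C \cong \mathcal{O}_C(2)$ and normal bundle $N_{C/V_5} \cong E(2)|_C$. The goal is to show no such curve exists.

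The first step is to extract the postulation of $C$ from the sequence
\[
0 \to \mathcal{O}_{V_5} \to E(2) \to I_C(4) \to 0
\]
together with the Riemann-Roch dimensions $h^0(\mathcal{O}_{V_5}(n)) = \tfrac{5}{6}n^3 + \tfrac{5}{2}n^2 + \tfrac{8}{3}n + 1$ (valid for $n \geq 0$ by Kodaira vanishing). Non-degeneracy gives $h^0(I_C(1)) = 0$; the numerical coincidence $h^0(\mathcal{O}_{V_5}(2)) = 23 = g = h^0(\omega_C) = h^0(\mathcal{O}_C(2))$ means one can test whether the restriction $H^0(V_5, \mathcal{O}_{V_5}(2)) \to H^0(C, \mathcal{O}_C(2))$ is an isomorphism, which would force $h^0(I_C(2)) = 0$ and sharply restrict the hypersurfaces through $C$. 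Combined with the cohomology of $E(2)$ obtained from the main sequence, one hopes to force $C$ to lie on a surface $S \subset V_5$ of low degree.

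The second step is to exploit such a surface, or alternatively to restrict to a general hyperplane section. On a smooth del Pezzo surface $S = V_5 \cap H$ of degree $5$, the bundle $E|_S$ is $\mu$-semistable with $c_1 = 0$, $c_2 = 2$ by \cite[Theorem 3.1]{M}, and the scheme $C \cap S$ consists of $22$ points on $S$ lying on a section of $E(2)|_S$. A Cayley-Bacharach analysis for $E|_S$, together with the adjunction bound $g(C) \leq 1 + \tfrac{1}{2}(\deg C)(\deg C + K_S\cdot C)/\deg S$ for curves on surfaces in $V_5$, would ideally contradict $(d,g) = (22,23)$.

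The main obstacle is precisely the one flagged in the remark preceding the conjecture: in the cubic and quartic cases the analogous curve has lower degree ($14$ on $V_3$, $18$ on $V_4$), and its non-existence follows from elementary Castelnuovo-Halphen bounds, whereas on $V_5$ the pair $(d,g)=(22,23)$ sits comfortably below the Castelnuovo bound $\pi(22,6)=34$ for $\mathbb{P}^6$, so no immediate obstruction is available. A finer bound sensitive to the ambient $V_5$, for instance via a Harris- or Gruson-Peskine-style analysis of curves on index-$2$ Fano threefolds, seems to be what is required. As an alternative that bypasses the curve entirely, one could try to leverage Theorem 1.4: since $M_{rep} \cong \mathbb{P}^5$ parametrises all minimal instantons by Corollary \ref{Ulrich} and is already a connected component of $M$, it would suffice to prove that $M$ is connected in the locally-free stable locus, for example by exhibiting a flat family degenerating any putative non-instanton stable bundle into $M_{rep}$.
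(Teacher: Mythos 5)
You are attempting to prove a statement that the paper itself leaves open: this is Conjecture \ref{conj}, and the remark immediately preceding it states explicitly that the author could not adapt the cubic/quartic arguments to $V_5$. So there is no proof in the paper to compare against, and your proposal does not supply one either. What you have written is a correct and well-informed diagnosis of the obstruction rather than an argument: you correctly identify that the only remaining case from Theorem \ref{auto} is a stable bundle with $h^1(E(-1))=1$, correctly extract the invariants of the associated curve $C\subset V_5\subset\pP^6$ (degree $22$, genus $23$, $\omega_C\cong\cO_C(2)$, non-degenerate, arising as the zero locus of a section of $E(2)$), and correctly observe that $(d,g)=(22,23)$ lies below the Castelnuovo bound $\pi(22,6)=34$, so the elementary non-existence argument that works on $V_3$ and $V_4$ fails here. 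This matches the paper's own remark about why the method of Druel does not transfer.

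The genuine gap is that neither of your two proposed routes is carried out, and both terminate at exactly the point where the real difficulty lives. In the first route, the numerical coincidence $h^0(\cO_{V_5}(2))=23=h^0(\cO_C(2))$ only \emph{raises the question} of whether $H^0(\cO_{V_5}(2))\to H^0(\cO_C(2))$ is injective; you give no argument either way, and without it the postulation analysis does not start. In the second route, proving that the stable locally-free locus of $M$ is connected (so that every stable bundle degenerates into $M_{rep}\cong\pP^5$) is essentially equivalent to the conjecture itself, since Theorem \ref{mainthm} already shows $M^{inst}$ is a connected component of $M$ and the only possible points outside it are precisely the conjectured-nonexistent bundles with $h^1(E(-1))=1$; exhibiting the required flat family is the whole problem, not a reduction of it. As written, the proposal should be understood as a research plan consistent with the paper's stated state of knowledge, not as a proof.
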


\cite{D} classified rank $2$ semistable sheaves with Chern classes $c_1=0$, $c_2=2$ and $c_3=0$ on cubic threefolds. In this section we classify rank $2$ semistable sheaves with the same Chern classes on $V_5$, closely following the argument of \cite{D}.

\begin{prop}\label{d31}
  Let $E$ be a rank $2$ semistable sheaf with Chern classes $c_1=0$, $c_2=2$ and $c_3=0$ on $V_5$. Let $F$ be the double dual of $E$. Then either $E$ is locally free or $F$ is locally free with second Chern class $c_2(F)=1$ and $h^0(F)=1$ or $F=H^0(F)\otimes\cO_{V_5}$.
\end{prop}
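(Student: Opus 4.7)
The plan is to analyze the canonical inclusion into the double dual via
\begin{equation*}
0 \to E \to F \to T \to 0,
\end{equation*}
where $T$ is a torsion sheaf of codimension $\geq 2$. I first verify that $F$ inherits $\mu$-semistability from $E$: any destabilizing $F'\subset F$ intersects $E$ in a subsheaf of the same rank and first Chern class, hence of the same slope; in particular $c_1(F)=0$ and $H^0(F(-1))=0$. Additivity of the Chern character yields $\mathrm{ch}_2(T)=2l-c_2(F)$ as an effective $1$-cycle and $\mathrm{ch}_3(T)=\tfrac12 c_3(F)\geq 0$; combined with Hartshorne's criterion (a rank $2$ reflexive sheaf on a smooth $3$-fold is locally free iff $c_3=0$), this forces $c_2(F)\in\{0,1,2\}$, splitting the argument into three cases.

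If $c_2(F)=2$, then $T$ is at most $0$-dimensional. The case $T=0$ makes $E=F$ reflexive with $c_3(F)=c_3(E)=0$, hence locally free by Hartshorne, giving alternative (i). The subcase $T\neq 0$ (purely $0$-dimensional) is the main obstacle: I would rule it out by mimicking Druel's argument in the cubic threefold case, computing $\chi(F)=\ell(T)$ via Riemann--Roch with the Todd class from Section~2.1, and combining this with the stability vanishing $H^0(F(-1))=0$ and Serre duality $h^3(F)=0$ to produce a destabilizing subsheaf of $E$ or a contradiction with $c_3(E)=0$ via a local analysis of $F$ at the putative punctual singular locus.

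If $c_2(F)=1$, the $1$-dimensional support of $T$ is a line $L$, so one expects $T\cong \cO_L$. A short Riemann--Roch computation then forces $c_3(F)=0$ (any embedded $0$-dimensional component in $T$ must be excluded by the same stability argument), so $F$ is locally free by Hartshorne's criterion, and $\chi(F)=1$. The semistability vanishings together with Serre duality then give $h^0(F)=1$, and a nonzero section produces an exact sequence $0\to \cO_{V_5}\to F\to I_L\to 0$ via Proposition~\ref{hlemma}, which is alternative (ii).

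Finally, if $c_2(F)=0$, Riemann--Roch gives $\chi(F)=2$ while the semistability vanishings (using $H^0(F(-1))=0$ and Serre duality) force $h^2(F)=h^3(F)=0$, so $h^0(F)\geq 2$. A $2$-dimensional subspace of sections yields a nontrivial map $\cO_{V_5}^{\oplus 2}\to F$; slope semistability prevents a rank-drop on a divisor, so the map is generically an isomorphism, and the vanishing $\mathrm{Ext}^1(\cO_{V_5},\cO_{V_5})=H^1(\cO_{V_5})=0$ on the Fano threefold rules out any nontrivial extension of trivial line bundles, yielding $F=H^0(F)\otimes \cO_{V_5}$: alternative (iii).
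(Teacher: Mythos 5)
Your case division by $c_2(F)\in\{0,1,2\}$ and the intended conclusions in each case match the paper's, but the engine that actually closes each case is missing. The paper's proof runs everything through a general hyperplane section $S\in|\cO_{V_5}(1)|$ (a degree $5$ del Pezzo surface), chosen so that $F_S=(E_S)^{\vee\vee}$ is a $\mu$-semistable rank $2$ bundle; the preparatory Lemmas \ref{D22} and \ref{D23} about such bundles on $S$ then give (a) $h^1(F_S(n))=0$ for $n\le -2$ and $n\ge 1$, whence via $0\to F(n-1)\to F(n)\to F_S(n)\to 0$ the vanishings $h^1(F(n))=0$ for $n\le -2$ and $h^2(F(n))=0$ for $n\ge 0$, and (b) the bound $h^0(F)\le h^0(F_S)$ with $h^0(F_S)\le 1$ unless $F_S$ is trivial. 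These are exactly the inputs your sketch invokes as ``semistability vanishings'' but never establishes: Serre duality gives $h^3(F)=0$ from stability, but $h^2(F)=0$ does \emph{not} follow from stability alone, and without it you cannot write $\chi(F)=h^0(F)-h^1(F)$; without the bound $h^0(F)\le 1$ (resp.\ $\le 2$) you cannot conclude $c_3(F)=0$ in your cases (ii) and (iii) — note $\chi(F)=2-c_2(F)+c_3(F)/2$, so ``Riemann--Roch forces $c_3(F)=0$'' is precisely the assertion $\chi(F)\le 2-c_2(F)$ that needs proof. Your derivation of the trichotomy itself (effectivity of $\mathrm{ch}_2(T)$ plus $c_2(F)\ge 0$) is fine and marginally different from the paper's, but the paper's route through sections of $F_S$ is what simultaneously delivers the $h^0$ bound that the rest of the argument consumes.

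Two further concrete problems. First, the subcase $c_2(F)=2$, $T\neq 0$ is explicitly deferred (``I would rule it out by mimicking Druel's argument''); the actual argument needs $h^0(F)=1$ and $h^1(F)=0$ to pin down $\ell(T)=1$, then uses $h^0(E)=0$ (from semistability and the comparison of $\chi(\cO_{V_5}(n))$ with $\tfrac12\chi(E(n))$) to show the section of $F$ maps onto $T=\C(p)$ and hence produces $I_p\subset E$ with $p(I_p)>p(E)$ — you name the destabilizing subsheaf as the goal but supply none of the intermediate steps. Second, your parenthetical that ``any embedded $0$-dimensional component in $T$ must be excluded by the same stability argument'' is not correct as stated: a punctual subsheaf of $T$ does not by itself yield a subsheaf of $E$; the destabilization only appears once you have a global section of $F$ whose image in $T$ is nonzero, which again requires the cohomological control you have not established.
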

\begin{proof}
    See \cite[Proposition 3.1]{D}. Let $S\in |\cO_{V_5}(1)|$ be a general hyperplane section so that $E_S$ is $\mu$-semistable with respect to the polarization $\cO_S(1)$ (\cite[Theorem 3.1]{M}) and so that $F_S$ is isomorphic to the double dual of $E_S$. The sheaf $F$ is $\mu$-semistable. The sheaf $F_S$ is locally free of rank $2$ and $\mu$-semistable with $c_1(F_S)=0$(\cite{H2}). Let $R$ denote the cokernel of the natural map $E\to F$. The sheaf $E$ is torsion free and $R$ is of dimension at most $1$. We have $c_2(F_S)=c_2(E_S)+c_2(R_S)=2-l(R_S)$ and $\chi(F_S)=l(R_S)$. We obtain the relation that $h^0(F_S)=h^1(F_S)+l(R_S)$ because $h^2(F_S)=h^0(F_S(-1))=0$. Note if $h^0(F_S)=0$ then $l(R_S)=0$. Suppose $h^0(F_S)\geq 1$. Then the zero locus of a non-zero section is either empty or of pure codimension $2$ (\cite[Lemma 2.1]{D}). If it is empty then $F_S$ is trivial and if codimension $2$ then $h^0(F_S)=1$. Thus in general we have $l(R_S)\in\{0,1,2\}$ and $c_2(F_S)\in\{0,1,2\}$.\
    
    Consider the exact sequence:
    \begin{align*}
        0\to F(n-1)\to F(n)\to F_S(n)\to 0.
    \end{align*}
    We have $h^1(F_S(n))=0$ for $n\leq -2$ and $n\geq 1$ (Lemma \ref{D22}, Lemma \ref{D23}). We then deduce $h^1(F(n-1))\geq h^1(F(n))$ for $n\leq -2$ and $n\geq 1$. Now since $h^1(F(n))=0$ for $n\ll0$(\cite[Theorem 2.5]{H2}) and $h^2(F(n))=0$ for $n\gg0$, we have $h^1(F(n))=0$ for all $n\leq -2$ and $h^2(F(n))=0$ for $n\geq 0$. We finally have $h^3(F)=h^0(F^{\vee}(-2))=0$ (\cite[Proposition 1.10]{H2}) and $h^0(F)\leq h^0(F_S)$.\
    
    Suppose $l(R_S)=0$. Then $c_2(F)=2$ and $\chi(F)=c_3(F)/2$. We then deduce the formula $c_3(F)/2=h^0(F)-h^1(F)$. Now $c_3(F)\geq 0$ (\cite[Proposition 2.6]{H2} ) and $h^0(F)\leq h^0(F_S)\leq 1$ and we then have $c_3(F)=0$ or $2$.\
    
    If $c_3(F)=0$, then the sheaves $E$ and $F$ are canonically isomorphic and locally free(\cite[Proposition 2.6]{H2}).\
    
    If $c_3(F)=2$, then $h^0(F)=1$ and $h^1(F)=0$. The sheaf $R$ is of dimension $0$ and $l(R)=\chi(F)-\chi(E)=1$. We thus have $R=\C(p)$ for some $p\in V_5$. Since $h^0(F)=1$, we have a non zero morphism $\cO_{V_5}\to F$. Furthermore,  since $\chi(E(n))=\frac{5}{3}n^3+5n^2+\frac{10}{3}n$ and $\chi(\cO_{V_5}(n))=\frac{5}{6}n^3+\frac{5}{2}n^2+\frac{8}{3}n+1$, we have $h^0(E)=0$ because $E$ is semistable. The induced morphism $\cO_{V_5}\to R$ is thus nonzero. It is surjective and induce an inclusion $I_p\subset V_5$. We have $\chi(I_p(n))=\frac{5}{6}n^3+\frac{5}{2}n^2+\frac{8}{3}n$, which is in contradiction with the semistability of $E$.\
    
    Suppose $l(R_S)\geq 1$. We have then $h^0(F_S)\geq 1$. The zero locus of a nonzero global section is
either empty or pure codimension 2.
Suppose there is a section of $F_S$ whose zero locus is pure codimension $2$. Then $h^0(F_S)=1$. We deduce $h^1(F_S)=0$ ,then $l(R_S)=1$ and $c_2(F)=1$. We have the inequality $\chi(F)=h^0(F)-h^1(F)=1+c_3(F)/2\leq 1-h^1(F)$. Then $c_3(F)=0$. Since $c_3(F)\geq 0$(\cite[Proposition 2.6]{H2}). Then sheaf $F$ is then locally free (\cite[Proposition 2.6]{H2}) with second Chern class $c_2(F)=1$ and $h^0(F)=1$.\

Suppose finally there is a non-vanishing section of $F_S$ in which case the restriction is isomorphic to $H^0(F_S)\otimes \cO_S$ and then $l(R_S)=2$ and $c_2(F)=0$. We obtain the inequality $\chi(F)=c_3(F)/2+2=h^0(F)-h^1(F)\leq 2-h^1(F)$, then $c_3(F)=0$ since $c_3(F)\geq 0$(\cite[Proposition 2.6]{H2}) and $h^0(F)=2$. The sheaf $F$ is thus locally free. Suppose there exist a nonzero section whose zero locus $Z$ is nonempty. The scheme $Z$ is pure of dimension $1$ since $h^0(F(-1))=0$ and $F$ is then the extension of $I_Z$ by $\cO_{V_5}$. We then have $h^0(F)=1$ which is absurd. Thus the sheaf $F$ is then isomorphic to $H^0(F)\otimes \cO_{V_5}$.
\end{proof}

\begin{lem}\label{d34}
    Suppose $\theta$ is  the theta-characteristic of a smooth conic $C\subset V_5$. We consider the sheaf $E$ which is the kernel of the surjection $H^0(\theta(1))\otimes\cO_{V_5}\to\theta(1)$. Then $E$ is stable with Chern classes $c_1(E)=0$, $c_2(E)=2$ and $c_3(E)=0$.
\end{lem}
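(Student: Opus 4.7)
The plan is to split the proof into two steps: first compute the Chern classes of $E$ from the defining exact sequence, and then establish stability by a case analysis on saturated rank-$1$ subsheaves. The Chern-class step is routine; the stability analysis is where the real content lies.

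Since $C \simeq \pP^1$ with $\cO_C(1) \simeq \cO_{\pP^1}(2)$ (as $C$ is a conic) and $\theta \simeq \cO_{\pP^1}(-1)$, we have $\theta(1) \simeq \cO_{\pP^1}(1)$, so $h^0(\theta(1)) = 2$ and $\theta(1)$ is globally generated. Writing $i\colon C \hookrightarrow V_5$, the defining map becomes a short exact sequence
\[
0 \to E \to \cO_{V_5}^{\oplus 2} \to i_*\theta(1) \to 0,
\]
so $E$ is torsion-free of rank $2$. A Grothendieck--Riemann--Roch computation using $[C] = 2l$ in $H^4(V_5,\Z)$ and $\chi(\theta(1)) = 2$ gives $\mathrm{ch}(i_*\theta(1)) = 2l$ (the degree-$3$ term vanishes). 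Hence $\mathrm{ch}(E) = 2 - 2l$, yielding $c_1(E) = 0$, $c_2(E) = 2$ and $c_3(E) = 0$.

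For stability, the long exact sequence shows $H^0(E) = 0$ because $H^0(\cO_{V_5}^{\oplus 2}) \to H^0(\theta(1))$ is the identity by construction. Let $F = I_Z(a) \hookrightarrow E$ be any saturated rank-$1$ subsheaf with $Z$ of codimension $\geq 2$; I want to show $p(F) < p(E)$. If $a \geq 1$, then any map $I_Z(a) \to \cO_{V_5}^{\oplus 2}$ extends to a map $\cO_{V_5}(a) \to \cO_{V_5}^{\oplus 2}$ via the vanishing $\mathrm{Ext}^1(\cO_Z(a), \cO_{V_5}^{\oplus 2}) = 0$ (local-to-global spectral sequence plus the codim-$2$ vanishing of $\mathcal{E}xt^{<2}(\cO_Z,-)$), but $\mathrm{Hom}(\cO_{V_5}(a), \cO_{V_5}^{\oplus 2}) = H^0(\cO_{V_5}(-a))^{\oplus 2} = 0$ for $a \geq 1$, a contradiction. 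If $a \leq -1$, the $n^2$-coefficient of $p(E) - p(F)$ equals $-\tfrac{5a}{2} > 0$, so $p(F) < p(E)$ for $n \gg 0$.

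The borderline case $a = 0$ is the main obstacle. Here $F = I_Z$, and the same extension argument identifies the inclusion $I_Z \hookrightarrow E$ with multiplication by some nonzero $s \in H^0(\cO_{V_5}^{\oplus 2}) = \C^2$ such that the composed map $I_Z \to i_*\theta(1)$ vanishes. Since $H^0(\cO_{V_5}^{\oplus 2}) \xrightarrow{\sim} H^0(\theta(1))$, the image $\bar s \in H^0(\theta(1))$ is nonzero. The crucial local computation is this: $\theta(1)$ is a line bundle on $C$, and $\bar s$ vanishes at a single point of $C$; outside this point $\bar s$ is a local generator of $i_*\theta(1)$, while at this point the local parameter of $C$ is a nonzerodivisor in $\cO_C$, and in either case the annihilator of $\bar s$ in $\cO_{V_5,x}$ is $(I_C)_x$. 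Hence $I_Z \subset I_C$ globally, i.e., $C \subset Z$ scheme-theoretically. The surjection $\cO_Z \twoheadrightarrow \cO_C$ then gives $\chi(\cO_Z(n)) \geq \chi(\cO_C(n)) = 2n+1$, whence $p(F) = \chi(I_Z(n)) \leq \chi(I_C(n)) = \tfrac{5n^3}{6} + \tfrac{5n^2}{2} + \tfrac{2n}{3}$. Combined with the computed $p(E) = \tfrac{5n^3}{6} + \tfrac{5n^2}{2} + \tfrac{5n}{3}$, we obtain $p(E) - p(F) \geq n > 0$ for $n \geq 1$, finishing stability. This pointwise annihilator-equals-$I_C$ identification is the one non-formal ingredient of the argument.
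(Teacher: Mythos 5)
Your proof is correct and follows essentially the same route as the paper's: reduce to rank-one subsheaves $I_Z(a)$ with $Z$ of codimension at least $2$, eliminate $a>0$, dispose of $a<0$ by comparing leading coefficients of the reduced Hilbert polynomials, and in the borderline case $a=0$ use the induced constant section to force $I_Z\subset I_C$, concluding via $\chi(I_C(n))<p(E,n)$. The only cosmetic difference is that you rule out $a\geq 1$ by the vanishing of $\Hom(I_Z(a),\cO_{V_5}^{\oplus 2})$, whereas the paper deduces $c_1(F)\leq 0$ from the $\mu$-semistability of $H^0(\theta(1))\otimes I_C$ in its snake-lemma diagram.
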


\begin{proof}
     See \cite[Lemma 3.4]{D}.     The calculation of Chern class is immediate. Let $F\subset E$ be a subsheaf of rank $1$. Let $F\hookrightarrow F\smvee\smvee$ be the natural injection of $F$ into its double dual. Since $F\smvee\smvee$ is a reflexive sheaf of rank $1$ on a smooth variety, it is invertible.  Since $\mathrm{Pic}(V_5)\cong\Z$, $F\smvee\smvee=\cO_{V_5}(a)$ for some $a\in \Z$, we have injection $F(-a)\hookrightarrow \cO_{V_5}$.  Since $c_1(F)=c_1(F\smvee\smvee)=a$, the sheaf $F$ is of the form $I_Z(a)$ where $Z\subset V_5$ is a close subscheme of dimension at most $1$ and $a\in \Z$. We have a commutative diagram whose rows and columns are exact:\\
         \[\begin{tikzcd}
         &0\arrow{d}&0\arrow{d}\\
    &H^0(\theta(1))\otimes I_C\ar[equal]{r}\arrow{d}&H^0(\theta(1))\otimes I_C\arrow{d}\\
    0\arrow{r} &E\arrow{r}\arrow{d}&H^0(\theta(1))\otimes \cO_{V_5}\arrow{r}\arrow{d}&\theta(1)\arrow{r}\arrow[equal]{d}&0\\
    0\arrow{r}&\theta\arrow{r}\arrow{d}&H^0(\theta(1))\otimes \cO_{C}\arrow{r}\arrow{d}&\theta(1)\\
    &0&0
    \end{tikzcd}
    \]
    Denote $F_0$ the kernel of the induced map $F\to \theta$. We have an inclusion $F_0\subset H^0(\theta(1))\otimes I_C$. The sheaf $H^0(\theta(1))\otimes I_C$ is $\mu$-semistable of zero slope so we have $c_1(F)=c_1(F_0)\leq c_1(H^0(\theta(1))\otimes I_C)=0$ because $\theta$ is of dimension $1$.\
    
    If $c_1(F)<0$, we have $\chi(F(n))<\frac{1}{2}\chi(E(n))$ for $n\gg0$. If $c_1(F)=0$, then $F=I_Z$ with $\mathrm{codim}(Z)\geq 2$ and we have then $I^{\vee\vee}_Z=\cO_{V_5}$. The inclusion $I_Z\subset H^0(\theta(1))\otimes\cO_{V_5}$ induced from the inclusion $E\subset H^0(\theta(1))\otimes\cO_{V_5}$ is given by a nonzero section $s\in H^0(\theta(1))$. The induced map $I_Z\to \theta(1)$ therefore associates the section $f|C$ with the function $f$. The section is generally nonzero so $I_Z\subset I_C$ and then $\chi(I_Z(n))\leq\chi(I_C(n))<1/2\chi(E(n))$ for $n\gg0$ because $\chi(I_C(n))=\frac{5}{6}n^3+\frac{5}{2}n^2+\frac{2}{3}n$ and $\chi(E(n))=\frac{5}{3}n^3+5n^2+\frac{10}{3}n$.
\end{proof}
\begin{thm}
    Let $E$ be a rank $2$ semistable sheaf with Chern classes $c_1=0$, $c_2=2$ and $c_3=0$ on $V_5$. If $E$ is stable, then either $E$ is locally free or $E$ is associated to a smooth conic $C\subset V_5$ such that we have the exact sequence:
     \begin{align*}
         0\to E\to H^0(\theta(1))\otimes \cO_{V_5}\to \theta(1)\to 0
     \end{align*}
     where $\theta$ is the theta-characteristic of $C$.\\
     If $E$ is strictly semistable, then  $E$ is the extension of two ideal sheaves of lines in $V_5$.
\end{thm}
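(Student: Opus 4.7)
My approach is to invoke Proposition \ref{d31} to split into three cases for the double dual $F = E\smvee\smvee$: (i) $E = F$ is locally free; (ii) $F$ is locally free of rank $2$ with $c_2(F) = 1$ and $h^0(F) = 1$; or (iii) $F \cong \cO_{V_5}^{\oplus 2}$. Case (i) is exactly the first alternative of the theorem. In the remaining two cases, a Chern character computation from $ch(R) = ch(F) - ch(E)$ shows that $R = F/E$ is purely $1$-dimensional of curve class $l$ with $\chi(R) = 1$ in case (ii), and purely $1$-dimensional of curve class $2l$ with $\chi(R) = 2$ in case (iii); purity of $R$ follows in both cases from the semistability of $E$, since any $0$-dimensional torsion in $R$ would produce a saturation $E \subsetneq E' \subset F$ with $p(E') > p(E)$.

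In case (ii), I would apply Theorem \ref{Serre} to the unique nonzero section of $F$ to write $0 \to \cO_{V_5} \to F \to I_{L_1} \to 0$ for some line $L_1 \subset V_5$. Letting $K$ be the kernel of the composite $E \hookrightarrow F \twoheadrightarrow I_{L_1}$, one has $K = I_T$ for a closed subscheme $T \subset V_5$; the case $K = 0$ is ruled out on rank grounds, while $T = \emptyset$ is ruled out by semistability since $p(\cO_{V_5}) > p(E)$. A direct comparison of reduced Hilbert polynomials then forces $T$ to be a line $L_2$, with equality $p(I_{L_2}) = p(E)$, so $E$ cannot be stable in this case. A snake-lemma argument on the pair of short exact sequences, combined with the already-known Chern character of $R$, then identifies $E/I_{L_2}$ with $I_{L_1}$, exhibiting $E$ as an extension of two ideal sheaves of lines.

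In case (iii) I have $0 \to E \to \cO_{V_5}^{\oplus 2} \to R \to 0$ with $\Supp(R)$ a degree-two curve. If $\Supp(R)$ is a smooth conic $C$, then $R$ is a line bundle on $C \cong \pP^1$, and $\chi(R) = 2$ forces $R \cong \cO_{\pP^1}(1) = \theta(1)$; the quotient is then necessarily the evaluation map of the two-dimensional linear system $H^0(\theta(1))$, recovering the exact sequence of the theorem's second alternative, with $E$ stable by Lemma \ref{d34}. If instead $\Supp(R)$ is a reducible or non-reduced conic $L_1 \cup L_2$, I would decompose $R$ via restriction to components into $0 \to R_1 \to R \to R_2 \to 0$ with each $R_i$ a structure sheaf $\cO_{L_i}$ on the corresponding line, and trace this back through the resolution to produce an extension $0 \to I_{L_1} \to E \to I_{L_2} \to 0$, exhibiting $E$ as a strictly semistable extension of ideal sheaves of lines.

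The main obstacle I anticipate is the degenerate-support analysis in case (iii): showing that $R$ admits the clean decomposition into degree-one pieces on each irreducible component, and that the resulting kernels really are ideal sheaves of lines rather than some non-reduced or higher-depth variant. This boils down to a diagram chase verifying that each $R_i$ is forced to be $\cO_{L_i}$ by purity and Euler characteristic, a step that uses the structure of pure one-dimensional sheaves on nodal or non-reduced conics in $V_5$ in the same spirit as Druel's argument on cubic threefolds.
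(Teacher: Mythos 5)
Your overall architecture matches the paper's: both start from Proposition \ref{d31}, and your treatment of case (i) and of case (ii) is essentially sound (in case (ii) you work with the kernel of $E\hookrightarrow F\twoheadrightarrow I_{L_1}$ where the paper instead identifies $R=F/E$ with $\cO_{l_1}$ directly, but either route leads to the extension of two ideal sheaves of lines). The problem is in case (iii), and it is not the obstacle you flagged but one step earlier.

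Your claim that purity of $R$ ``follows from the semistability of $E$, since any $0$-dimensional torsion in $R$ would produce a saturation $E\subsetneq E'\subset F$ with $p(E')>p(E)$'' is not a valid deduction: semistability of $E$ constrains subsheaves of $E$, and $E'$ \emph{contains} $E$, so $p(E')>p(E)$ contradicts nothing. Moreover, even if purity were established, it is not the hypothesis you need: there exist pure one-dimensional sheaves of degree $2$ with $\chi(R(n))=2n+2$ (e.g.\ $\cO_{l_1}(1)\oplus\cO_{l_2}(-1)$) that are not of the two shapes you want, so purity alone does not yield the dichotomy ``$\theta(1)$ on a smooth conic or an extension of $\cO_{l_1}$ by $\cO_{l_2}$.'' The paper's route is to prove the stronger statement $h^0(R(-1))=0$, which is where the real work of case (iii) lives: one first shows $h^1(E)=h^2(E)=0$ by restricting to a general hyperplane section $S$, deduces $h^0(R)=2$ and that $H^0(F)\to H^0(R)$ is an isomorphism, and then shows $H^0(R)\to H^0(R_S)$ is injective. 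It is in this last step that semistability of $E$ enters correctly: a section of $R$ dying on $S$ lifts to a section of $F$ whose kernel $I_Z$ has $Z$ of dimension $0$, and $I_Z$ \emph{is} a subsheaf of $E$, with $\chi(I_Z(n))=\frac{5}{6}n^3+\frac{5}{2}n^2+\frac{8}{3}n+1-l(Z)>\frac{1}{2}\chi(E(n))$, contradicting semistability. With $h^0(R(-1))=0$ in hand, the classification of $R$ (including the nodal and non-reduced conic cases you worried about) is exactly \cite[Lemma~3.3]{D}, which is quoted rather than reproved. So the ``main obstacle'' you anticipate is real, but it is resolved by supplying the cohomological input $h^0(R(-1))=0$, not by a direct diagram chase on the components of the support; as written, your case (iii) has a gap both in the purity claim and in the passage from purity to the stated dichotomy.
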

\begin{proof}
    See \cite[Theorem 3.5]{D}.  Let $F$ be the double dual of $E$ and $R$ the cokernel of the canonical injection $E\to F$. By Proposition \ref{d31}, either $E$ is locally free, or $F$ is locally free with second Chern class $c_2(F)=1$ and $h^0(F)=1$, or $F=H^0(F)\otimes \cO_{V_5}$. As before, $\chi(E(n))=\frac{5}{3}n^3+5n^2+\frac{10}{3}n$ and $\chi(\cO_{V_5}(n))=\frac{5}{6}n^3+\frac{5}{2}n^2+\frac{8}{3}n+1$. It follows that $h^0(E)=0$ since $E$ is semistable.\
    
    Suppose $E$ is locally free. Then $h^0(E)=0$ as above and the bundle $E$ is then stable.\
    
    Suppose $F$ is locally free with second Chern class $c_2(F)=1$ and $h^0(F)=1$. Then $\chi(R(n))=n+1$. Let $s\in H^0(F)$ be a non-trivial section. It vanishes along a line $l_2\in V_5$ by \cite[Proposition 2.1]{D}. We have $h^0(E)=0$ and the section $s$ of $F$ induce a nonzero map $\cO_{V_5}\to R$. Denote $I_Z$ the kernel of this map. We claim the scheme $Z$ is of dimension $1$. Otherwise we would have an inclusion $I_Z\subset E$ with $\chi(I_Z(n))=\frac{5}{6}n^3+\frac{5}{2}n^2+\frac{8}{3}n+1-l(Z)$, which is impossible by the semistability of $E$. Let $S\in |\cO_{V_5}(1)|$ be a general hyperplane section. The inclusion $\cO_{Z\cap S}\subset R_S$ is an isomorphism because $l(R_S)=1$. Also the support of $Z$ of dimension $1$ is then a line $l_1$ and we have a surjective morphism $\cO_Z\twoheadrightarrow \cO_{l_1}$. We then deduce $R=\cO_{l_1}$, because the two sheaves have the same characteristic polynomial. The morphism $\cO_{V_5}\to \cO_{l_1}$ is nontrivial and the lines $l_1$ and $l_2$ are then disjoint. Consider the exact commutative diagram:\
         \[\begin{tikzcd}
         &0\arrow{d}&0\arrow{d}\\
    0\ar{r}&I_{l_1}\ar{r}\arrow{d}&\cO_{V_5}\arrow{d}\ar{r}&\cO_{l_1}\ar[equal]{d}\ar{r}&0\\
    0\arrow{r} &E\arrow{r}\arrow{d}&F\arrow{r}\arrow{d}&\cO_{l_1}\arrow{r}&0\\
    &I_{l_2}\arrow[equal]{r}\arrow{d}&I_{l_2}\arrow{d}\\
    &0&0
    \end{tikzcd}
    \]
    We can see $E$ is strictly semistable and is an extension of $I_{l_1}$ and $I_{l_2}$.\
    
    Suppose $F=H^0(F)\otimes \cO_{V_5}$. We have $\chi(R(n))=2n+2$ and $h^0(E)=0$. Then in particular $h^0(R)\geq 2$. Let $S\in |\cO_{V_5}(1)|$ be a general hyperplane section. We have an exact sequence:
    \begin{align*}
        0\to E_S\to H^0(F)\otimes\cO_S\to R_S\to 0.
    \end{align*}
    The map $H^0(H^0(F)\otimes\cO_S)\to H^0(R_S)$ is not zero since the morphism $H^0(F)\otimes\cO_S\to R_S$ is surjective. We have then $h^0(E_S)\leq 1$. If $h^0(E_S)=0$, then the map $H^0(F_S)\to H^0(R_S)$ is an isomorphism and the map $H^0(F_S)\otimes H^0(\cO_S(n))\to H^0(R_S(n))$ is surjective for all $n\geq 0$ since there exist a hyperplane section of $S$ avoiding the support of $R_S$. If $h^0(E_S)=1$ then the quotient $H^0(F_S)/H^0(E_S)$ is of dimension $1$ and we have a surjective map $(H^0(F_S)/H^0(E_S))\otimes \cO_S\to R_S$. Now $l(R_S)=2$ and $\cO_S(1)$ is very ample and the map $(H^0(F_S)/H^0(E_S))\otimes \cO_S(n)\to H^0(R_S(n))$ is the surjective for $n\geq 1$. As a result, the map $H^0(F_S)\otimes H^0(\cO_S(n))\to H^0(R_S(n))$ is also surjective. We have finally $h^1(E_S(n))=0$ for $n\geq 1$ because $h^1(\cO_S(n))=0$ for $n\geq 1$. The exact sequence
    \begin{align*}
        0\to E(n-1)\to E(n)\to E_S(n)\to 0
    \end{align*}
    gives $h^2(E(n-1)\leq h^2(E(n))$ for $n\geq 1$. We have then $h^2(E(n))=0$ for $n\geq 0$ because $h^2(E(n))=0$ for $n\gg0$. In particular $h^2(E)=0$. We obtain the exact sequence
    \begin{align*}
        0\to E\to H^0(F)\otimes \cO_{V_5}\to R\to 0
    \end{align*}
    and also $h^3(E)=0$. But $\chi(E)=0$, thus  we have $h^1(E)=0$. We deduce $h^0(R)=2$ and the inclusion $H^0(F)\to H^0(R)$ is an isomorphism. We claim the restriction map $H^0(R)\to H^0(R_S)$ is injective. Suppose otherwise. Then there exist a nonzero section $s\in H^0(R)$  whose image in $H^0(R_S)$ is zero. Denote by $I_Z$ the kernel of the map $\cO_{V_5}\to R$ defined by the section $s$ and $Q$ the cokernel of the inclusion $\cO_Z\to R$. By hypothesis, the map $\cO_{Z\cap S}\to R_S$ is zero and we have then $R_S=Q_S$. The sheaf $Q$ is of dimension $1$ and $c_2(Q)=c_2(R)$. The scheme $Z$ is then of dimension $0$. Note $\chi(I_Z(n))=\frac{5}{6}n^3+\frac{5}{2}n^2+\frac{8}{3}n+1-l(Z)$ which is in contradiction with the semistability of $E$ since we have the inclusion $I_Z\subset E$. The restriction map $H^0(R)\to H^0(R_S)$ is injective and we have then $h^0(R(-1))=0$. Then by \cite[Lemma 3.3]{D} the sheaf $R(-1)$ is either a theta characteristic of a smooth conic $C\subset V_5$ in which case $E$ is stable by Lemma \ref{d34}, or there exists two lines $l_1,l_2\subset V_5$ so that $R$ is an extension of $\cO_{l_1}$ and $\cO_{l_2}$ in which case we have the commutative diagram:
    \[\begin{tikzcd}
    &0\arrow{d}&0\arrow{d}&0\ar{d}\\
    0\ar{r}&I_{l_1}\ar{r}\arrow{d}&E\arrow{d}\ar{r}&I_{l_2}\ar{d}\ar{r}&0\\
    0\arrow{r} &\cO_{V_5}\arrow{r}\arrow{d}&H^0(F)\otimes\cO_{V_5}\arrow{r}\arrow{d}&\cO_{V_5}\ar{d}\arrow{r}&0\\
    0\ar{r}&\cO_{l_1}\arrow{r}\arrow{d}&R\ar{r}\arrow{d}&\cO_{l_2}\ar{d}\ar{r}&0\\
    &0&0&0
    \end{tikzcd}
    \]
    Then $E$ is an extension of $I_{l_1}$ and $I_{l_2}$.
\end{proof}

\section{Relation to representation of $Q_3$}
Given any representation $R$ of the quiver $Q_3$, we can obtain via $\Psi^{-1}$ a complex 
\begin{align*}
   C_R: \mathcal{U}^{\oplus 2}\xrightarrow{f_R}\mathcal{Q}\smvee^{\oplus 2}
\end{align*}
in $\mathcal{D}^b(V_5)$ where we put the second term in degree $0$.
\begin{prop}\label{p1}
  If $R$ is semistable, then the corresponding map $f_R:\mathcal{U}^{\oplus 2}\to \mathcal{Q}\smvee^{\oplus 2}$
  is injective.
\end{prop}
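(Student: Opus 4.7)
My plan is to prove the contrapositive: assuming $f_R$ is not injective, I will produce a subrepresentation of $R$ violating $(-1,1)$-semistability.

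First, under the identification $\Hom(\mathcal{U}^{\oplus 2},\mathcal{Q}\smvee^{\oplus 2}) \cong A \otimes \mathrm{End}(\mathbb{C}^2)$ coming from $\Hom(\mathcal{U},\mathcal{Q}\smvee) = A$, I would write $f_R = \sum_{i=1}^3 \alpha_i \otimes Y_i$ for a basis $\{\alpha_i\}$ of $A$. Two elementary observations then drive the argument. On the one hand, for $v \in \mathbb{C}^2$, the composition $\mathcal{U} \hookrightarrow \mathcal{U}^{\oplus 2} \xrightarrow{f_R} \mathcal{Q}\smvee^{\oplus 2}$ given by $u \mapsto u \otimes v$ corresponds to $\sum_i \alpha_i \otimes Y_i v \in A \otimes \mathbb{C}^2$, and hence vanishes if and only if $v \in \bigcap_i \ker Y_i$. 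Dually, for a linear functional $\ell \in (\mathbb{C}^2)^\vee$, the composition $\mathcal{U}^{\oplus 2} \xrightarrow{f_R} \mathcal{Q}\smvee^{\oplus 2} \to \mathcal{Q}\smvee$ (quotient by $\ell$) vanishes if and only if $\sum_i \mathrm{im}(Y_i) \subseteq \ker \ell$. Thus a copy of $\mathcal{U}$ inside $\ker f_R$ would produce a $(1,0)$-subrepresentation of $R$, while an $\ell$ killing $f_R$ would produce a $(2,1)$-subrepresentation, both destabilizing for the weight $(-1,1)$.

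To force one of these two structures, I would analyze $K \coloneqq \ker f_R$ via $\mu$-slope stability. Let $I = \mathrm{im}(f_R)$, so that $0 \to K \to \mathcal{U}^{\oplus 2} \to I \to 0$ is exact and $I \hookrightarrow \mathcal{Q}\smvee^{\oplus 2}$. Since $\mathcal{U}$ and $\mathcal{Q}\smvee$ are $\mu$-stable, $\mathcal{U}^{\oplus 2}$ and $\mathcal{Q}\smvee^{\oplus 2}$ are $\mu$-semistable of slopes $-h/2$ and $-h/3$ respectively, so $\mu(K) \leq -h/2$ and $\mu(I) \leq -h/3$. The relation $c_1(K) + c_1(I) = -2h$ combined with the integrality $c_1 \in \mathbb{Z}\cdot h$ leaves only the following possibilities: $\rank K = 4$ (so $f_R=0$); $\rank K = 2$ with $\mu(K)=-h/2$; or $\rank K = 1$ with $K \cong \cO_{V_5}(-1)$ and $\mu(I)=-h/3$. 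The case $\rank K = 3$ is numerically ruled out. In the first two cases, the polystability of $\mathcal{U}^{\oplus 2}$ with simple factor $\mathcal{U}$ together with $\Ext^1(\mathcal{U},\mathcal{U})=0$ forces $K$ to contain a direct summand $\mathcal{U} \hookrightarrow \mathcal{U}^{\oplus 2}$ defined by some nonzero $v \in \mathbb{C}^2$; the first observation then gives $v \in \bigcap_i \ker Y_i$, producing a destabilizing $(1,0)$-subrepresentation. In the third case, the analogous polystability analysis for $I \hookrightarrow \mathcal{Q}\smvee^{\oplus 2}$ forces $I \cong \mathcal{Q}\smvee$ embedded in a single direction $(a_1,a_2) \in \mathbb{C}^2 \setminus \{0\}$; choosing $\ell$ orthogonal to $(a_1,a_2)$, the dual observation yields $\sum_i \mathrm{im}(Y_i) \subseteq \ker \ell$ of dimension $1$, producing a destabilizing $(2,1)$-subrepresentation.

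The main obstacle is the rigidity step above: slope bounds and $c_1$-integrality eliminate most numerical configurations easily, but concluding that the kernel (resp. image) is actually a direct summand isomorphic to $\mathcal{U}$ (resp. $\mathcal{Q}\smvee$) rather than a more exotic subsheaf of the correct numerical type relies on the exceptionality of these bundles, via the standard fact that any $\mu$-semistable subsheaf of $\mathcal{U}^{\oplus 2}$ of slope $-h/2$ (and similarly for $\mathcal{Q}\smvee^{\oplus 2}$ of slope $-h/3$) is a direct sum of copies of the stable factor.
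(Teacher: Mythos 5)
Your argument is correct and follows essentially the same route as the paper: bound the slope of the kernel and image using $\mu$-semistability of $\mathcal{U}^{\oplus 2}$ and $\mathcal{Q}\smvee^{\oplus 2}$, rule out the intermediate ranks numerically, identify the remaining kernel/image with copies of $\mathcal{U}$ or $\mathcal{Q}\smvee$ by stability, and convert these into destabilizing subrepresentations of dimension $(1,0)$ or $(2,1)$. The only point where the paper is slightly more careful is in your third case: the image $I$ itself need not be saturated in $\mathcal{Q}\smvee^{\oplus 2}$, so one should first replace $I$ by its saturation $I'$ (the kernel of $\mathcal{Q}\smvee^{\oplus 2}\to \mathcal{C}_{tf}$) before concluding it is a copy of $\mathcal{Q}\smvee$; the resulting $(2,1)$-subrepresentation is produced exactly as you describe.
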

\begin{proof}
    We use $\mathcal{K}$ and $\mathcal{I}$ to denote the kernel and image of $f_R$. Note both $\ke$ and $\im$ are torsion-free. Since $R$ is semistable, $f_R$ is nonzero. So the rank of $\im$ is $1,2,3$ or $4$. Note if $\rank(\im)=4$, then $\ker$ is torsion-free of rank $0$, hence $0$ and we are done. It remains to exclude the other cases.\
    
    Since $\mathcal{U}^{\oplus 2}$ and $\mathcal{Q}\smvee^{\oplus 2}$ are $\mu$-semistable with slope $-1/2$ and $-1/3$, we have $-1/2\leq\mu(\im)\leq-1/3$. Thus $\rank(\im)$ is either $2$ or $3$.\ 
    
    If $\rank(\im)=2$, then $c_1(\im)=-1$, $\rank(\ke)=2$ and $c_1(\ke)=-1$. Since $\mu(\ke)=\mu(\mathcal{U}^{\oplus 2})$, we know $\ke$ is $\mu$-semistable. In turn, $\im$ is also $\mu$-semistable. Since there is no rank $1$ sheaf with $\mu=-1/2$, $\im$ and $\ke$ are $\mu$-stable. By the uniqueness of Jordan-Holder filtration, $\ke\cong \im \cong \mathcal{U}$. Thus $R$ has a subrepresentation of dimension $(1,0)$, which contradicts its semistability.\  
    
    If $\rank(\im)=3$, then $c_1(\im)=-1$. Recall we use $\mathcal{C}$ to denote the cokernel of $f_R$ and let $\mathcal{C}_{tf}$ to be the quotient of $\mathcal{C}$ by its torsion part. Let $\im'$ be the kernel of the map $\mathcal{Q}\smvee^{\oplus 2}\to \mathcal{C}_{tf}$. We then have $\im\hookrightarrow\im'\hookrightarrow \mathcal{Q}\smvee^{\oplus 2}$. Now $\rank(\im')=\rank(\im)=3$ and $c_1(\im')\geq c_1(\im)=-1$. By the semistability of $\mathcal{Q}\smvee^{\oplus 2}$, $c_1(\im')=-1$. Thus $\mathcal{C}_{tf}$ is a torsion-free sheaf of rank $3$ and $c_1=-1$. By the same argument as in the previous paragraph, we can show $\im'\cong\mathcal{C}_{tf}\cong \mathcal{Q}\smvee$. Thus $R$ has a subrepresentation of dimension $(2,1)$, which contradicts its semistability.\  
\end{proof}
We now know if $R$ is semistable, $C_R$ is isomorphic in $\mathcal{D}^b(V_5)$ to a sheaf $E$. A straight forward computation shows $\rank(E)=2$, $c_1(E)=0$, $c_2(E)=2$ and $c_3(E)=0$.
\begin{prop}\label{p2}
  With the notation above, $E$ is a semistable sheaf. 
\end{prop}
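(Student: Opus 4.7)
The plan is to show both that $E$ is torsion-free and that every rank-$1$ torsion-free subsheaf $F\subset E$ satisfies $p(F)\leq p(E)$. The main tools are the defining short exact sequence $0\to\mathcal{U}^{\oplus 2}\to\mathcal{Q}\smvee^{\oplus 2}\to E\to 0$, the vanishings $H^*(E)=H^*(E(-1))=0$ coming from Lemma~2.1, and the $\mu$-semistability of $\mathcal{Q}\smvee^{\oplus 2}$ (of slope $-1/3$). For any subsheaf $S\subset E$ I would form its preimage in $\mathcal{Q}\smvee^{\oplus 2}$ under the surjection and let $\mu$-semistability of the ambient bundle constrain the numerics.

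For torsion-freeness, suppose $T\subset E$ is a nonzero torsion subsheaf with preimage $M\subset\mathcal{Q}\smvee^{\oplus 2}$. Then $M$ is torsion-free of rank $4$ with $c_1(M)=-2+c_1(T)$, so the bound $\mu(M)=(-2+c_1(T))/4\leq -1/3$ translates into the integer inequality $c_1(T)\leq 0$, which immediately excludes torsion supported on a divisor (where $c_1(T)\geq 1$). For torsion supported in codimension $\geq 2$, local $\mathcal{E}xt$-vanishing gives $\Ext^1(T,\mathcal{U}^{\oplus 2})=0$, so the sequence $0\to\mathcal{U}^{\oplus 2}\to M\to T\to 0$ splits and produces an injection $T\hookrightarrow\mathcal{Q}\smvee^{\oplus 2}$, which is absurd since the target is torsion-free. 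Hence $T=0$.

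For Gieseker semistability, given a rank-$1$ torsion-free sub $F=I_Z(k)$ its preimage $K\subset\mathcal{Q}\smvee^{\oplus 2}$ has rank $5$ and $c_1(K)=k-2$, so $\mu$-semistability forces $(k-2)/5\leq -1/3$ and thus $k\leq 0$. The case $k<0$ gives $p(F)<p(E)$ automatically. For $k=0$ write $F=I_Z$: the case $Z=\emptyset$ contradicts $H^0(E)=0$; for $Z$ zero-dimensional and nonempty, applying $\Hom(-,E)$ to $0\to I_Z\to\cO_{V_5}\to\cO_Z\to 0$ and using $H^0(E)=H^1(E)=0$ yields $\Hom(I_Z,E)\cong\Ext^1(\cO_Z,E)$, which vanishes by applying $\Hom(\cO_Z,-)$ to the defining sequence of $E$ together with $\Ext^i(\cO_Z,L)=0$ for $i\leq 2$ and $L$ locally free (since $Z$ has codimension $3$); and for $Z$ a curve of degree $d\geq 1$ and arithmetic genus $g$, the direct calculation $p(I_Z,n)-p(E,n)=(1-d)n+g\leq 0$ for $n\gg 0$ holds with equality exactly when $Z$ is a line, which is the strictly semistable case anticipated by the classification of Section~3.

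The most delicate input is the local $\mathcal{E}xt$-vanishing invoked in both the torsion-freeness step and the zero-dimensional $Z$ step, which rules out all codimension $\geq 2$ singular subobjects in one blow; it follows routinely from the Koszul resolution applied to a zero- or one-dimensional subscheme of a smooth threefold. Everything else is a clean numerical consequence of the $\mu$-semistability of $\mathcal{Q}\smvee^{\oplus 2}$ and Lemma~2.1, so I do not anticipate any serious obstacle.
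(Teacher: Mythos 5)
Your proof is correct and follows essentially the same strategy as the paper: establish torsion-freeness and then bound rank-one subsheaves by pulling them back to subsheaves of $\mathcal{Q}\smvee^{\oplus 2}$, using its $\mu$-semistability for the slope bounds and $\mathcal{E}xt$-vanishing in codimension $\geq 2$ to split the relevant extensions. The only (harmless) deviations are cosmetic: you exclude divisorial torsion via the preimage rather than the torsion-free quotient, and you dispatch the zero-dimensional $Z$ case by showing $\Hom(I_Z,E)=0$ directly instead of embedding $I_Z$ into $\mathcal{Q}\smvee^{\oplus 2}$ as the paper does.
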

\begin{proof}
    We first show $E$ is torsion-free. Suppose $E_{tor}\neq 0$. If $c_1(E_{tor})>0$, Then $\mu(E_{tf})\leq -1/2$, which contradicts the semistability of $\mathcal{Q}\smvee^{\oplus 2}$. Thus $E_{tor}$ is supported in dimension at most $1$. Now let $K$ be the kernel of the surjection $\mathcal{Q}\smvee^{\oplus 2}\to E_{tf}$. We have a short exact sequence:
    \begin{align*}
        0\to \mathcal{U}^{\oplus2}\to K\to E_{tor}\to 0.
    \end{align*}
    Note we have
    \begin{align*}
        \Ext^1(E_{tor},\mathcal{U}^{\oplus2})&=\Ext^2(\mathcal{U}^{\oplus2},E_{tor}\otimes \cO_{V_5}(-2))\\
        &=H^2(E_{tor}\otimes\cO_{V_5}(-2)\otimes (\mathcal{U}^{\oplus2})\smvee)\\
        &=0
    \end{align*}
    since $E_{tor}$ is supported in dimension at most $1$. Thus $K\cong \mathcal{U}^{\oplus2}\oplus E_{tor}$. This contradicts that fact that $K$ is torsion-free.\
    
    It remains to show that any rank $1$ subsheaf $E'$ of $E$ satisfies $p(E')\leq p(E)$. If $c_1(E')<0$, the above inequality holds. If $c_1(E')>0$, let $E''=E/E'$, then the kernel $J$ of the map $\mathcal{Q}\smvee^{\oplus 2}\to E''$ is given by the short exact sequence:
    \begin{align*}
        0\to \mathcal{U}^{\oplus2}\to J\to E'\to 0
    \end{align*}
    Then $\mu(J)\geq -1/5$. This is absurd since $J$ is a subsheaf of $\mathcal{Q}\smvee^{\oplus 2}$. If $c_1(E')=0$, then $E'$ is the ideal sheaf of a closed subscheme of dimension at most $1$. We have $c_2(E')$ to be $0,1$ or $2$. If $c_2(E')=2$, then $p(E')<p(E)$. If $c_2(E')=0$, $E'$ is the ideal sheaf of a subscheme $Z$ supported in dimension $0$. Using the above notation we see $J$ is again an extension of $E'$ and $\mathcal{U}^{\oplus2}$, we have exact sequence:
    \begin{align*}
        \Ext^1(\cO_{V_5},\mathcal{U}^{\oplus2})\to\Ext^1(E',\mathcal{U}^{\oplus2})\to \Ext^2(\cO_Z,\mathcal{U}^{\oplus2})
    \end{align*}
    It is straight forward to see the left and right terms are $0$. Thus $J=E'\oplus\mathcal{U}^{\oplus2}$. Then $E'$ is a subsheaf of $\mathcal{Q}\smvee^{\oplus 2}$, which contradicts its semistability. If $c_2(E')=1$. By looking at the Euler characteristic, we see $ch_3(E')\leq 0$. When $ch_3(E')<0$, we have $p(E')<p(E)$. When $ch_3(E')=0$, $p(E')=p(E)$ and $E'$ is the ideal sheaf of a line.
\end{proof}
\begin{cor}\label{aaa}
    If $R$ is stable (strictly semistable), then the corresponding $E$ is stable (strictly semistable).
\end{cor}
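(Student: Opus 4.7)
My plan is to match strict semistability on the two sides of the equivalence $\Psi$ and transfer it back and forth. By Proposition~\ref{p2}, whenever $R$ is $\Theta$-semistable the corresponding sheaf $E$ is semistable, so in each direction I only need to match \emph{strict} semistability. On the quiver side, any destabilizing proper subrepresentation $R'\subsetneq R$ has $\Theta(R')=0$ and hence dimension vector $(d,d)$ with $0<d<2$, so it is forced to have dimension $(1,1)$; such a subrepresentation is automatically $\Theta$-stable (otherwise a $(1,0)$-subrepresentation would already destabilize $R$), and by Lemma~\ref{repcompute} together with the exact sequence (2.1) it corresponds under $\Psi^{-1}$ to the ideal sheaf $I_L$ of some line $L\subset V_5$. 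On the sheaf side, the last classification theorem of Section~3 states that $E$ is strictly semistable if and only if it fits in an exact sequence $0\to I_L\to E\to I_{L'}\to 0$ for two lines $L,L'\subset V_5$.

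For the implication \emph{$R$ strictly semistable $\Rightarrow E$ strictly semistable}, I would pick a $\Theta$-stable subrepresentation $R'\subset R$ of dimension $(1,1)$, with quotient $R''=R/R'$ also of dimension $(1,1)$ and hence stable. Applying $\Psi^{-1}$ to the short exact sequence $0\to R'\to R\to R''\to 0$ in $\mathrm{Rep}(Q_3)$ produces a distinguished triangle $I_L\to E\to I_{L'}\to I_L[1]$ in $\mathcal{B}_{V_5}$ whose three vertices are sheaves concentrated in cohomological degree zero, so the cohomological long exact sequence collapses to a short exact sequence $0\to I_L\to E\to I_{L'}\to 0$ of coherent sheaves, exhibiting $E$ as strictly semistable.

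For the converse \emph{$R$ stable $\Rightarrow E$ stable}, I would argue by contradiction. If $E$ were strictly semistable, the classification theorem would give an exact sequence $0\to I_L\to E\to I_{L'}\to 0$, and both $I_L$ and $I_{L'}$ lie in $\mathcal{B}_{V_5}$ by (2.1). Applying $\Psi$ yields a distinguished triangle in $\mathcal{D}^b(Q_3)$ whose three vertices are representations concentrated in degree zero with dimension vectors $(1,1),(2,2),(1,1)$, so the triangle descends to a short exact sequence $0\to\Psi(I_L)\to R\to\Psi(I_{L'})\to 0$ of representations. This produces a proper nonzero subrepresentation of $R$ of $\Theta$-weight zero, contradicting the $\Theta$-stability of $R$.

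The one technical check I would need to carry out is that $\Psi$ sends an ideal sheaf of a line to a representation concentrated in degree zero; this is a short $\mathrm{Ext}$-computation from the formulas in Lemma~\ref{repcompute}, using the exact sequence (2.1) together with the vanishing of all $\mathrm{Ext}$'s in the wrong direction within the exceptional pair $(\mathcal{U},\mathcal{Q}\smvee)$. Granted this, the only homological input is the standard fact that a distinguished triangle whose three vertices all lie in the heart of a t-structure is induced by a short exact sequence in the heart, which follows from examining the cohomological long exact sequence. I do not anticipate any serious obstacle.
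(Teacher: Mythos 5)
Your proposal is correct and follows essentially the same route as the paper: both reduce the statement to the $(1,1)$ sub- and quotient representations and the corresponding ideal sheaves of lines, your ``distinguished triangle with all vertices in the heart'' step being exactly the paper's snake-lemma diagram in different language, and the converse direction likewise matching the paper's use of the classification of strictly semistable sheaves. The one point to tighten is your assertion that the quotient $R''$ is stable ``because it has dimension $(1,1)$'' --- that alone is false (the zero-map representation of dimension $(1,1)$ is not even semistable, and if $R''$ were zero the triangle would not collapse to a short exact sequence); you need that a $(1,0)$ quotient of $R''$ would produce a $(2,1)$ subrepresentation of $R$ of negative $\Theta$-weight, contradicting semistability of $R$, which is precisely the paper's verification that $\ker(f)=0$.
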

\begin{proof}
    Suppose $R$ is strictly semistable, then $R$ has a subrepresentation of dimension $(1,1)$. Note all nonzero morphisms from $\mathcal{U}$ to $\mathcal{Q}\smvee$ are injective and has $I_l$ as cokernel for some line $l$ in $V_5$.  Thus we have the commutative diagram:
    \[\begin{tikzcd}
    0\arrow{r}\arrow{d} &\mathcal{U}\arrow{r}\arrow{d}&\mathcal{Q}\smvee\arrow{r}\arrow{d}&I_{l}\arrow{r}\arrow{d}&0\\
    0\arrow{r}\arrow{d} &\mathcal{U}^{\oplus 2}\arrow{r}\arrow{d}&\mathcal{Q}\smvee^{\oplus 2}\arrow{r}\arrow{d}&E\arrow{r}\arrow{d}&0\\
    Ker(f)\arrow{r}&\mathcal{U}\arrow{r}&\mathcal{Q}\smvee\arrow{r}&Coker(f)
    \end{tikzcd}
    \]
    Again by the injectivity of nonzero morphisms from $\mathcal{U}$ to $\mathcal{Q}\smvee$, 
    we have either $Ker(f)=0$ or $Ker(f)=\mathcal{U}$. But the second case would imply there is an injection $\mathcal{U}^{\oplus 2}\to \mathcal{Q}\smvee$, which makes $R$ unstable. Thus $Ker(f)=0$ and $Coker(f)=I_{l'}$ for some line $l'$. By the snake lemma, we see $I_{l}\hookrightarrow E$, so $E$ is strictly semistable.
    
    If $E$ is strictly semistable, we have short exact sequence:
    \begin{align*}
        0\to I_{l_1}\to E\to I_{l_2}\to 0,
    \end{align*}
    where $l_1,l_2$ are two lines in $V_5$. By \cite[Lemma 4.2]{Ku2}, we have exact sequence:
    \begin{align*}
        0\to \mathcal{U}\to \mathcal{Q}\smvee\to I_{l_1}\to 0.
    \end{align*}
    Since $\{\mathcal{U},\mathcal{Q}\smvee\}$ is an exceptional pair, $\Hom(\mathcal{Q}\smvee,\mathcal{Q}\smvee^{\oplus 2})=\Hom(\mathcal{Q}\smvee,E)$, we obtain the commutative diagram:
    \[\begin{tikzcd}
    &0\arrow{d}&0\arrow{d}&0\arrow{d}&\\
    0\arrow{r} &\mathcal{U}\arrow{r}\arrow{d}&\mathcal{Q}\smvee\arrow{r}\arrow{d}&I_{l_1}\arrow{r}\arrow{d}&0\\
    0\arrow{r} &\mathcal{U}^{\oplus 2}\arrow{r}&\mathcal{Q}\smvee^{\oplus 2}\arrow{r}&E\arrow{r}&0\\
    \end{tikzcd}
    \]
    This shows $R$ has a subrepresentation with dimension vector $(1,1)$, thus it is not stable.
\end{proof}
\cite{LP} proved that for any Ulrich bundle $E$ of rank $r\geq 2$ on $V_5$, $E(-1)$ is a semistable representation of $Q_3$ with dimension vector $(r,r)$. By Corollary \ref{Ulrich}, this implies minimal instanton bundles are representations of $Q_3$ with dimension vector $(2,2)$.
\begin{thm}[\cite{LP} Proposition 3.4, 3.10]\label{rep}
    Let $E$ be a minimal instanton bundle on $V_5$, then there exists a short exact sequence:
    \begin{align*}
        0\to \mathcal{U}^{\oplus 2}\to \mathcal{Q}\smvee^{\oplus 2}\to E\to 0.
    \end{align*}
    In particular, $E\in\mathcal{B}_{V_5}$ and $\Psi(E)$ is isomorphic to a stable representation of $Q_3$ with dimension vector $(2,2)$.
\end{thm}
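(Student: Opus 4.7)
The plan is to exploit the equivalence $\Psi\colon \mathcal{B}_{V_5} \simeq \mathcal{D}^b(Q_3)$ of Lemma~\ref{repcompute}, identify $\Psi(E)$ as a representation of dimension $(2,2)$ concentrated in degree zero, and translate back. First I would verify that $E \in \mathcal{B}_{V_5}$. The lemma recalled just before on cohomology of minimal instantons yields $H^\bullet(E(t)) = 0$ for $t \in \{0,-1,-2\}$, so in particular $\Ext^\bullet(\cO_{V_5}, E) = H^\bullet(E) = 0$ and $\Ext^\bullet(\cO_{V_5}(1), E) = H^\bullet(E(-1)) = 0$, placing $E$ in the right orthogonal $\langle \cO_{V_5}, \cO_{V_5}(1)\rangle^\perp = \mathcal{B}_{V_5}$.

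Next I would determine the dimension vector of $\Psi(E)$. By definition this requires computing $\Ext^\bullet(\mathcal{Q}\smvee, E)$ and $\Ext^\bullet(E, \mathcal{U}[1])$. A Hirzebruch--Riemann--Roch computation using the Chern class and Todd class data recorded in Section 2.1 yields $\chi(\mathcal{Q}\smvee, E) = 2$ and $\chi(E, \mathcal{U}) = -2$. To upgrade these Euler characteristics to dimensions at the respective vertices, one tensors the tautological sequence $0 \to \mathcal{U} \to \cO_{V_5}^{\oplus 5} \to \mathcal{Q} \to 0$ and its dual with $E$ (and with various twists of $E$), then combines the instanton vanishings with Serre duality and the $\mu$-semistability of tensor products of $\mu$-semistable bundles in characteristic zero to kill all remaining $\Ext$ groups. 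With $\Psi(E)$ thus identified as a representation in degree zero with $\dim M_2^0 = \dim M_1^0 = 2$, applying $\Psi^{-1}$ produces a two-term complex $\mathcal{U}^{\oplus 2} \to \mathcal{Q}\smvee^{\oplus 2}$ (with the second term in degree $0$) whose cone is quasi-isomorphic to $E$; since $E$ is a genuine sheaf placed in degree zero, the map must be injective, yielding the desired short exact sequence.

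Stability of $\Psi(E)$ then follows from stability of $E$: a subrepresentation of dimension $(1,1)$ would correspond, via the snake-lemma argument in the proof of Corollary~\ref{aaa}, to an embedding of an ideal sheaf of a line into $E$, contradicting the stability of $E$ (which has slope zero and is locally free of rank $2$). The main obstacle I anticipate is the systematic bookkeeping of the higher Ext vanishings in the second step. A much shorter alternative, which I would likely take for brevity, is to invoke Corollary~\ref{Ulrich}: the twist $E(1)$ is a rank $2$ Ulrich bundle, and Proposition 3.4 of \cite{LP} applied to $E(1)$ delivers $0 \to \mathcal{U}^{\oplus 2} \to \mathcal{Q}\smvee^{\oplus 2} \to E(1)(-1) \to 0$ with $E(1)(-1) = E$, while Proposition 3.10 of loc.\ cit.\ establishes stability of the associated representation.
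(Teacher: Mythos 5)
Your ``short alternative'' is exactly what the paper does: Theorem~\ref{rep} is not proved independently but is obtained by combining Corollary~\ref{Ulrich} (so that $E(1)$ is a rank~$2$ Ulrich bundle) with Propositions~3.4 and~3.10 of \cite{LP}, which give the presentation $E=E(1)(-1)=\mathrm{Coker}(\mathcal{U}^{\oplus 2}\to\mathcal{Q}\smvee^{\oplus 2})$ and the stability of the associated representation. Your longer first-principles sketch is a plausible outline of how \cite{LP} argues, though as written its stability step only excludes $(1,1)$-subrepresentations and would still need to rule out subrepresentations of dimension $(1,0)$ and $(2,1)$; since you defer to the citation route, this does not affect the proposal.
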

\begin{rem}
    If $E$ is a stable rank $2$ bundle with Chern classes $c_1(E)=0$, $c_2(E)=2$, while satisfying $H^1(E(-1))\neq 0$, then $E\notin \mathcal{B}_{V_5}$ and we cannot associate to $E$ any representation of $Q_3$.
\end{rem}
The following two lemmas will help us to generalize Theorem \ref{rep}.
\begin{lem}\label{12344}
    Let $E$ be a semistable rank $2$ sheaf with Chern classes $c_1(E)=0$, $c_2(E)=2$ and $c_3(E)=0$ on $V_5$ that is not locally free. Then $E\in \mathcal{B}_{V_5}$.
\end{lem}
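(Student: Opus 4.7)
The plan is to verify directly that $H^i(V_5, E) = 0$ and $H^i(V_5, E(-1)) = 0$ for every $i$, which is exactly the condition $E \in \mathcal{B}_{V_5} = \langle \cO_{V_5}, \cO_{V_5}(1)\rangle^{\perp}$. By the classification in the preceding theorem, a non-locally-free semistable sheaf $E$ with the given Chern classes must fall into one of two cases: either (i) $E$ is stable and fits in
\begin{equation*}
0 \to E \to H^0(\theta(1)) \otimes \cO_{V_5} \to \theta(1) \to 0
\end{equation*}
for a smooth conic $C \subset V_5$ with theta characteristic $\theta$, or (ii) $E$ is strictly semistable and sits in an extension $0 \to I_{l_1} \to E \to I_{l_2} \to 0$ of ideal sheaves of two lines. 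I will handle the two cases separately using the long exact sequence in cohomology.

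In case (i), since $C \cong \pP^1$ is embedded by $\cO_C(1) \cong \cO_{\pP^1}(2)$, we have $\theta = \cO_{\pP^1}(-1)$ and $\theta(1)|_C \cong \cO_{\pP^1}(1)$, so $h^0(\theta(1)) = 2$ and $H^{\geq 1}(\theta(1)) = 0$. The evaluation map $H^0(\theta(1)) \otimes \cO_{V_5} \to \theta(1)$ induces the identity on $H^0$, and the middle term has no higher cohomology, so the long exact sequence forces $H^\bullet(E) = 0$. Twisting down to $0 \to E(-1) \to H^0(\theta(1)) \otimes \cO_{V_5}(-1) \to \theta \to 0$, the middle term is cohomologically zero by Kodaira vanishing together with Serre duality (using $\omega_{V_5} = \cO_{V_5}(-2)$), while $\theta \cong \cO_{\pP^1}(-1)$ is acyclic, so $H^\bullet(E(-1)) = 0$.

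In case (ii) it is enough to show that the ideal sheaf $I_l$ of any line satisfies $H^\bullet(I_l) = H^\bullet(I_l(-1)) = 0$. This follows at once from the ideal sequences $0 \to I_l(k) \to \cO_{V_5}(k) \to \cO_l(k) \to 0$ for $k \in \{0, -1\}$: for $k = 0$ the restriction $H^0(\cO_{V_5}) \to H^0(\cO_l)$ is an isomorphism of one-dimensional spaces and all higher cohomology vanishes, while for $k = -1$ both $\cO_{V_5}(-1)$ and $\cO_l(-1) \cong \cO_{\pP^1}(-1)$ are acyclic. The long exact sequence of the defining extension then yields $H^\bullet(E) = H^\bullet(E(-1)) = 0$.

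I do not anticipate a substantial obstacle: the content of the lemma is essentially a bookkeeping exercise once the classification of non-locally-free semistable sheaves with these Chern classes is in hand, since every relevant cohomology group is furnished either by a standard vanishing theorem or by the trivial computation on $\pP^1$.
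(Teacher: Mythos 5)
Your proposal is correct and follows essentially the same route as the paper: reduce to checking $H^\bullet(E)=H^\bullet(E(-1))=0$, split into the conic case and the two-lines case via the classification theorem, and run the long exact sequences. The only cosmetic difference is in the line case, where the paper invokes the resolution $0\to\mathcal{U}\to\mathcal{Q}\smvee\to I_l\to 0$ to conclude $I_l\in\mathcal{B}_{V_5}$ directly, while you verify the vanishing of $H^\bullet(I_l)$ and $H^\bullet(I_l(-1))$ by hand from the ideal sequence; both are immediate.
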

\begin{proof}
    It suffices to show that $H^*(E(-1))=H^*(E)=0$. If $E$ is associated to a smooth conic, we have short exact sequence:
    \begin{align*}
        0\to E\to H^0(\theta(1))\otimes\cO_{V_5}\to \theta(1)\to 0
    \end{align*}
    Since $H^*(\cO_{V_5}(-1))=H^*(\theta)=0$, we immediately obtain $H^*(E(-1))=0$. On the other hand, we have $h^0(H^0(\theta(1))\otimes\cO_{V_5})=2$ and $h^0(\theta(1))=2$. It is clear that the map $H^0(H^0(\theta(1))\otimes\cO_{V_5}) \to H^0(\theta(1))$ is surjective. Moreover, $H^i(\cO_{V_5})=H^i(\theta(1))=0$ for all $i>0$. Thus $H^*(E)=0$.\\
    If $E$ is the extension of the ideal sheaves of lines in $V_5$, we note by \cite[Lemma 4.2]{Ku2}, $I_l\in \mathcal{B}_{V_5}$ for all lines $l$. The lemma follows immediately.
\end{proof}

\begin{rem}
    We have seen that all sheaves in Lemma \ref{12344} satisfy the property $H^1(E(-1))=0$. We make the following definition:
\end{rem}
\begin{defn}
    A sheaf $E$ on $V_5$ is called an \it{instanton sheaf} if $E$ is semistable of rank $2$ with Chern classes $c_1(E)=0$, $c_2(E)=2$ and $c_3(E)=0$, satisfying the condition:
    \begin{align*}
        H^1(E(-1))=0.
    \end{align*}
\end{defn}
If a semistable sheaf $F$ of rank $2$ with Chern classes $c_1(F)=0$, $c_2(F)=2$ and $c_3(F)=0$ is not instanton, then by Theorem \ref{auto} and Lemma \ref{12344}, $F$ is a stable bundle with $H^1(F(-1))\cong \C$. We mention again that it is our conjecture that such $F$ does not exist.

\begin{lem}\label{surj}
    Let $C$ be a smooth conic on $V_5$. Consider the short exact sequence:
    \begin{align*}
        0\to \mathcal{Q}\otimes I_C\to \mathcal{Q}\to \mathcal{Q}|_C\to 0
    \end{align*}
    The induced map
    \begin{align*}
        f\colon H^0(\mathcal{Q})\to H^0(\mathcal{Q}|_C)
    \end{align*}
    is an isomorphism.
\end{lem}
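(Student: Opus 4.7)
The plan is to compare $f$ with the identity on $V$ via a commutative diagram obtained by restricting the defining sequence $0\to\mathcal{U}\to V\otimes\cO_{V_5}\to\mathcal{Q}\to 0$ to the smooth conic $C$. Since $\mathcal{U}$ is locally free, restriction preserves exactness and yields $0\to\mathcal{U}|_C\to V\otimes\cO_C\to\mathcal{Q}|_C\to 0$. The obvious square of restriction maps between these two sequences commutes and has exact rows, so taking $H^0$ produces the commutative square
\[
\begin{tikzcd}
V \arrow{r} \arrow{d}{\id} & H^0(\mathcal{Q}) \arrow{d}{f} \\
V \arrow{r} & H^0(\mathcal{Q}|_C)
\end{tikzcd}
\]
in which the left vertical map is the identity of $V$ because the restriction $H^0(\cO_{V_5})\xrightarrow{\sim}H^0(\cO_C)$ is the tautological isomorphism $\C\to\C$.

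Next, I would argue that both horizontal maps are isomorphisms. For the top row, the cohomology table recorded earlier gives $H^0(\mathcal{U})=H^1(\mathcal{U})=0$, so the long exact sequence identifies $V$ with $H^0(\mathcal{Q})$. For the bottom row, the restriction lemma for $\mathcal{U}$ gives $\mathcal{U}|_C=\theta\oplus\theta$, and since $C\cong\pP^1$ and $\theta$ is the inverse of the ample generator of $\mathrm{Pic}(C)$ we have $\theta\cong\cO_{\pP^1}(-1)$; hence $H^0(\mathcal{U}|_C)=H^1(\mathcal{U}|_C)=0$, and the same long exact sequence argument yields $V\xrightarrow{\sim}H^0(\mathcal{Q}|_C)$. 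Commutativity of the diagram then forces $f$ to be an isomorphism.

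No substantive obstacle arises: the proof is a short diagram chase that imports the two cohomological inputs already recorded in Section~2 (the vanishing of $H^*(\mathcal{U})$ and the splitting $\mathcal{U}|_C=\theta\oplus\theta$). An equivalent alternative would be to start from the long exact sequence of $0\to\mathcal{Q}\otimes I_C\to\mathcal{Q}\to\mathcal{Q}|_C\to 0$ and check directly that $H^0(\mathcal{Q}\otimes I_C)=0$ by tensoring the defining sequence with $I_C$; but the diagram above packages this together with the dimension count $h^0(\mathcal{Q})=h^0(\mathcal{Q}|_C)=5$ in a single step.
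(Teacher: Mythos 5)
Your proof is correct, but it is organized differently from the paper's. The paper works directly with the long exact sequence of $0\to\mathcal{Q}\otimes I_C\to\mathcal{Q}\to\mathcal{Q}|_C\to 0$: it first counts $h^0(\mathcal{Q})=h^3(\mathcal{Q}\smvee(-2))=5$ by Serre duality and $h^0(\mathcal{Q}|_C)=h^0(\theta(1)^{\oplus 2}\oplus\cO_C)=5$ from the splitting, and then proves surjectivity of $f$ by showing $H^1(\mathcal{Q}\otimes I_C)=0$ via a two-step d\'evissage (tensor the tautological sequence with $I_C$, then use $0\to\mathcal{U}\otimes I_C\to\mathcal{U}\to\mathcal{U}|_C\to 0$ together with $H^2(\mathcal{U})=0$ and $H^1(C,\theta\oplus\theta)=0$). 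You instead restrict the tautological sequence to $C$ and factor $f$ through the two evaluation maps $V\to H^0(\mathcal{Q})$ and $V\to H^0(\mathcal{Q}|_C)$, each of which is an isomorphism by the vanishing of $H^0$ and $H^1$ of $\mathcal{U}$ and of $\mathcal{U}|_C=\theta^{\oplus 2}\cong\cO_{\pP^1}(-1)^{\oplus 2}$. The two arguments rest on exactly the same cohomological inputs, but yours bypasses the ideal sheaf, the Serre-duality dimension count, and the explicit equality $5=5$, so it is slightly more economical; the paper's version has the mild advantage of isolating the vanishing $H^1(\mathcal{Q}\otimes I_C)=0$, which is the statement one would reuse if only surjectivity were needed. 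One cosmetic point: the exactness of the restricted sequence $0\to\mathcal{U}|_C\to V\otimes\cO_C\to\mathcal{Q}|_C\to 0$ is guaranteed because $\mathcal{Q}$ (the cokernel term) is locally free, hence $\mathcal{T}or_1(\mathcal{Q},\cO_C)=0$; attributing it to the local freeness of $\mathcal{U}$ is not the right justification, though the conclusion is of course unaffected since all three terms are locally free.
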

\begin{proof}
    We have exact sequence:
    \begin{align*}
        0\to H^0(\mathcal{Q}\otimes I_C)\to H^0(\mathcal{Q})\xrightarrow[]{f} H^0(\mathcal{Q}|_C)\to H^1(\mathcal{Q}\otimes I_C)
    \end{align*}
    Note $h^0(\mathcal{Q})=h^3(\mathcal{Q}\smvee(-2))=5$ and $h^0(\mathcal{Q}|_C)=h^0(\theta(1)\oplus\theta(1)\oplus\cO_C)=5$. To show $f$ is an isomorphism, it suffices to show $H^1(\mathcal{Q}\otimes I_C)=0$. Consider the short exact sequence:
    \begin{align*}
        0\to U\to V\otimes \cO_{V_5}\to \mathcal{Q}\to 0
    \end{align*}
    we have
    \begin{align*}
        H^1(I_C)^{\oplus 5}\to H^1(\mathcal{Q}\otimes I_C)\to H^2(\mathcal{U}\otimes I_C)
    \end{align*}
    It is clear $H^1(I_C)=0$, so it remains to show $H^2(\mathcal{U}\otimes I_C)=0$. Consider the short exact sequence:
    \begin{align*}
        0\to \mathcal{U}\otimes I_C\to \mathcal{U} \to \mathcal{U}|_C\to 0
    \end{align*}
    we have 
    \begin{align*}
        H^1(\mathcal{U}|_C)\to H^2(\mathcal{U}\otimes I_C)\to H^2(\mathcal{U})
    \end{align*}
    But $H^2(\mathcal{U})=0$ and $H^1(\mathcal{U}|_C)=H^1(C,\theta\oplus \theta)=0$, thus $H^2(\mathcal{U}\otimes I_C)=0$ and the proof is complete.
\end{proof}
We generalize Theorem \ref{rep} slightly:
\begin{thm}\label{rep2}
    Let $E$ be an instanton sheaf on $V_5$, then there exists a short exact sequence:
    \begin{align*}
        0\to \mathcal{U}^{\oplus 2}\to \mathcal{Q}\smvee^{\oplus 2}\to E\to 0.
    \end{align*}
    In particular, $E\in\mathcal{B}_{V_5}$ and $\Psi(E)$ is isomorphic to a  representation of $Q_3$ with dimension vector $(2,2)$.
\end{thm}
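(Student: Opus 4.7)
The plan is to split the proof into the three cases furnished by Theorem 3.5: (i) $E$ is a locally free minimal instanton bundle; (ii) $E$ is associated to a smooth conic $C$; (iii) $E$ is strictly semistable, hence an extension $0\to I_{l_1}\to E\to I_{l_2}\to 0$. Case (i) is precisely Theorem \ref{rep} and requires no further work. Cases (ii) and (iii) are exactly the new content needed to upgrade the result from minimal instanton \emph{bundles} to instanton \emph{sheaves}.

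For case (iii), I would run a horseshoe-type construction on the extension using the two resolutions $0\to\mathcal{U}\to\mathcal{Q}\smvee\to I_{l_i}\to 0$ from Lemma 2.4. The only obstruction to producing a compatible middle row is the vanishing of $\Ext^1(\mathcal{Q}\smvee,I_{l_1})$, and this follows by applying $R\Hom(\mathcal{Q}\smvee,-)$ to the resolution of $I_{l_1}$: since $\{\mathcal{U},\mathcal{Q}\smvee\}$ is an exceptional pair inside $\mathcal{B}_{V_5}$, we have $\Ext^{\bullet}(\mathcal{Q}\smvee,\mathcal{U})=0$ and $\Ext^i(\mathcal{Q}\smvee,\mathcal{Q}\smvee)=0$ for $i\ne 0$, and the long exact sequence immediately kills $\Ext^1(\mathcal{Q}\smvee,I_{l_1})$. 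This produces the desired resolution $0\to\mathcal{U}^{\oplus 2}\to\mathcal{Q}\smvee^{\oplus 2}\to E\to 0$.

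For case (ii), the strategy is to show that $\Psi(E)\in\mathcal{D}^b(Q_3)$ is a representation concentrated in cohomological degree $0$ with dimension vector $(2,2)$; then the complex $\Psi^{-1}\Psi(E)\cong C_R:\mathcal{U}^{\oplus 2}\to\mathcal{Q}\smvee^{\oplus 2}$ has total cohomology equal to the sheaf $E$, which forces the differential to be injective with cokernel $E$ as required. To compute the two pieces of $\Psi(E)$, apply $R\Hom(\mathcal{Q}\smvee,-)$ and $R\Hom(-,\mathcal{U})$ to the defining sequence $0\to E\to H^0(\theta(1))\otimes\cO_{V_5}\to\theta(1)\to 0$. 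For $M_2^{\bullet}=\Ext^{\bullet}(\mathcal{Q}\smvee,E)=H^{\bullet}(E\otimes\mathcal{Q})$, the dimensions drop out of the resulting long exact sequence once I combine Lemma \ref{surj} with the surjectivity of the multiplication map $H^0(\theta(1))\otimes H^0(\mathcal{Q}|_C)\to H^0(\theta(1)\otimes\mathcal{Q}|_C)$, which is a direct check using the decomposition $\mathcal{Q}\smvee|_C=\theta\oplus\theta\oplus\cO_C$ from Lemma 2.7. For $M_1^{\bullet}=\Ext^{\bullet}(E,\mathcal{U}[1])^*$, the vanishing $R\Gamma(\mathcal{U})=0$ collapses the long exact sequence to $\Ext^i(E,\mathcal{U})\cong\Ext^{i+1}(i_*\theta(1),\mathcal{U})$, and the latter is computed via Grothendieck duality using $\mathcal{U}|_C=\theta\oplus\theta$.

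The main obstacle will be the cohomological bookkeeping in case (ii): the surjectivity and vanishing assertions above must line up precisely so that \emph{all} non-degree-zero Ext groups die, forcing $\Psi(E)$ to be an honest representation rather than a longer complex. Once those vanishings are in hand, the structural conclusion (short exact sequence with the prescribed shape) is an immediate consequence of the equivalence $\Psi$ together with the fact that $E$ already sits in degree $0$ as a sheaf.
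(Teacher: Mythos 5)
Your proposal follows the paper's proof essentially verbatim: the same case split via the classification theorem, the same reduction to showing $\Ext^\bullet(E,\mathcal{U}[1])$ and $\Ext^\bullet(\mathcal{Q}\smvee,E)$ are both $\C^2$ concentrated in degree zero (with case (ii) handled by exactly the ingredients the paper uses, namely Lemma \ref{surj}, the splitting types on the conic, and $R\Gamma(\mathcal{U})=0$), and the same appeal to Theorem \ref{rep} for the locally free case. The only cosmetic difference is case (iii), where you assemble the resolution directly by a horseshoe argument from $\Ext^1(\mathcal{Q}\smvee,I_{l_1})=0$ instead of computing $\Ext^\bullet(I_{l_k},\mathcal{U}[1])=\Ext^\bullet(\mathcal{Q}\smvee,I_{l_k})=\C$ and passing back through $\Psi$; both rest on the same exceptional-pair vanishings and Kuznetsov's resolution of $I_l$, so this is the same argument in different clothing.
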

\begin{proof}
    By Lemma \ref{repcompute}, it suffices to show 
   $\Ext^\bullet(E,\mathcal{U}[1])=\C^2$ and 
    $\Ext^\bullet(\mathcal{Q}\smvee,E)=\C^2$.\\
    If $E$ is a vector bundle, the result is Theorem \ref{rep}.\\
    If $E$ is associated to a smooth conic, we have the short exact sequence:
    \begin{align*}
        0\to E\to H^0(\theta(1))\otimes\cO_{V_5}\to \theta(1)\to 0.
    \end{align*}
    Taking the long exact sequence and noting $H^*(\mathcal{U})=0$, we have
    \begin{align*}
        \Ext^i(E,\mathcal{U})=\Ext^{i+1}(\theta(1),\mathcal{U})
    \end{align*}
    It is easy to see the only nonzero part is 
    \begin{align*}
        \Ext^1(E,\mathcal{U})&=\Ext^{2}(\theta(1),\mathcal{U})\\
        &=\Ext^1(\mathcal{U}\smvee\otimes\theta(-1))\\
        &=\Ext_C^1(\cO_C(-1)\oplus\cO_C(-1))\\
        &=\C^2
    \end{align*}
    This proves $\Ext^\bullet(E,\mathcal{U}[1])=\C^2$. We now compute $\Ext^\bullet(\mathcal{Q}\smvee,E)$. 
    We have $\mathcal{Q}\otimes \theta(1)= \theta(1)\oplus\cO_C(1)\oplus\cO_C(1)$ and $H^k(\mathcal{Q}\otimes \theta(1))=0$ unless $k=0$. Moreover, $H^k(\mathcal{Q})=0$ unless $k=0$. Thus $\Ext^k(\mathcal{Q}\smvee,E)=0$ for $k\geq 2$ and we have exact sequence:
    \begin{align*}
        0\to \Hom(\mathcal{Q}\smvee,E)\to H^0(\theta(1))\otimes H^0(\mathcal{Q})\xrightarrow[]{g} H^0(\mathcal{Q}\otimes\theta(1))\to\Ext^1(\mathcal{Q}\smvee,E)\to 0
    \end{align*}
    Since $C\simeq \pP^1$, $g$ factors as
    \begin{align*}
        H^0(\theta(1))\otimes H^0(\mathcal{Q})\xrightarrow[]{id\otimes f} H^0(\theta(1))\otimes H^0(\mathcal{Q}|_C)\twoheadrightarrow H^0(\mathcal{Q}\otimes\theta(1))
    \end{align*}
    where $f$ is as in Lemma \ref{surj}, so $g$ is surjective. Thus $\Ext^1(\mathcal{Q}\smvee,E)=0$ and $\Hom(\mathcal{Q}\smvee,E)=\C^2$ by counting dimension.

    If $E$ is the extension of ideal sheaves of lines in $V_5$, then we have short exact sequence:
    \begin{align}\label{1}
        0\to I_{l_1}\to E\to I_{l_2}\to 0
    \end{align}
    By \cite[Lemma 4.2]{Ku2}, we have
    \begin{align*}
        &\Ext^\bullet(I_{l_k},\mathcal{U}[1])=\C\\ 
    &\Ext^\bullet(\mathcal{Q}\smvee,I_{l_k})=\C
    \end{align*}
    hence the result follows by looking at the long exact sequence induced by (\ref{1}).
\end{proof}

We now try to show all representations in Theorem \ref{rep2} are semistable.
\begin{prop}\label{ss}
  Let $E$ be a stable(strictly semistable) instanton sheaf on $V_5$. Then the corresponding representation in Theorem \ref{rep2} is $(-1,1)$-stable(strictly semistable).
\end{prop}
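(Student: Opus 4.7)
The plan is to split the argument into two stages. First I would establish that $R = \Psi(E)$ is $(-1,1)$-semistable, and then I would combine this with Corollary \ref{aaa} to obtain the stable versus strictly semistable dichotomy.

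For the semistability, I would begin by invoking Theorem \ref{rep2} to realize $E$ as the cokernel of an injective map
\[
f \colon \mathcal{U}^{\oplus 2} \hookrightarrow \mathcal{Q}\smvee^{\oplus 2}.
\]
Identifying $f$ with an element of $\Hom(\mathcal{U},\mathcal{Q}\smvee) \otimes \mathrm{End}(\C^2) = A \otimes \mathrm{End}(\C^2)$ and choosing a basis of $A$, the three resulting $2 \times 2$ matrices $\alpha_1, \alpha_2, \alpha_3$ are precisely the arrows of $R$. Since $\Theta(R) = 0$, the only proper subrepresentation dimensions $(d_1',d_2')$ that could violate $(-1,1)$-semistability are $(1,0)$, $(2,0)$, and $(2,1)$. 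I would rule each of these out by translating the defining condition of the subrepresentation into a factorization constraint on $f$ and deriving a contradiction with injectivity: a $(1,0)$-subrepresentation gives a nonzero line $V_1' \subset \C^2$ with $\alpha_i(V_1') = 0$ for all $i$, forcing the restriction $f|_{\mathcal{U} \otimes V_1'}$ to vanish; a $(2,0)$-subrepresentation forces $f = 0$ outright; and a $(2,1)$-subrepresentation confines $\mathrm{im}(f)$ to a subsheaf of the form $\mathcal{Q}\smvee \otimes V_2'$ for some line $V_2' \subset \C^2$, which has rank $3$ and therefore cannot accommodate the injective image of a rank-$4$ source.

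With $R$ known to be $(-1,1)$-semistable, Corollary \ref{aaa} would finish the argument. That corollary shows that if $R$ is stable (respectively strictly semistable), then the associated sheaf -- which in our setup is exactly the original $E$, since the above short exact sequence realizes $E$ as $C_R$ -- is stable (respectively strictly semistable). Taking contrapositives within the semistable class just established: if $E$ is stable then $R$ cannot be strictly semistable, hence $R$ is stable; and if $E$ is strictly semistable then $R$ cannot be stable, hence $R$ is strictly semistable.

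I do not anticipate a serious obstacle: the semistability check reduces to a rank count against the injectivity of $f$, and the dichotomy is a formal consequence of Corollary \ref{aaa}. The most delicate bookkeeping is in the $(2,1)$ case, where the argument depends on the numerical coincidence that $\mathcal{Q}\smvee$ has rank $3$, strictly less than the rank of $\mathcal{U}^{\oplus 2}$. A more hands-on sheaf-theoretic alternative for the stable case would instead take a hypothetical $(1,1)$-subrepresentation of $R$ and use the snake lemma together with the exact sequence $0 \to \mathcal{U} \to \mathcal{Q}\smvee \to I_L \to 0$ from \cite[Lemma 4.2]{Ku2} to produce an inclusion $I_L \hookrightarrow E$; since $p(I_L) = p(E)$, this would directly contradict stability of $E$, but routing through Corollary \ref{aaa} bypasses that diagram chase entirely.
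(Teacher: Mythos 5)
Your proposal is correct and follows essentially the same route as the paper: rule out destabilizing subrepresentations (the paper treats $(2,1)$ via an exact triangle $A^\bullet\to E\to\mathcal{Q}\smvee$ and its cohomology sheaves, landing on the same absurdity that $\mathcal{U}^{\oplus 2}$ would inject into the rank-$3$ sheaf $\mathcal{Q}\smvee$ that your matrix-factorization phrasing produces), and then deduce the stable/strictly-semistable dichotomy from Corollary \ref{aaa} by the same contrapositive. The only differences are cosmetic.
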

\begin{proof}
    Let $E$ be an instanton sheaf. By Theorem \ref{rep2}, there exists a short exact sequence:
    \begin{align*}
        0\to \mathcal{U}^{\oplus 2}\to \mathcal{Q}\smvee^{\oplus 2}\to E\to 0.
    \end{align*}
    We claim the induced representation is semistable. Suppose otherwise, then the representation has a subrepresentation of dimension either $(2,1)$ or $(1,0)$. If we are in the first case, then we have the following exact triangle in $\mathcal{D}^b(V_5)$:
    \begin{align*}
        A^{\bullet} \to E[0]\to B^{\bullet}
    \end{align*}
    where $A^\bullet\simeq (\mathcal{U}^{\oplus 2}\to \mathcal{Q}\smvee)$ and $B^\bullet\simeq \mathcal{Q}\smvee$ . Note in both $A^\bullet$ and $B^\bullet$, $Q\smvee$ lies in degree $0$. Taking the long exact sequence of cohomology sheaves of the exact triangle, we obtain:
    \begin{align*}
        0=\cH^{-2}(B^\bullet)\to \cH^{-1}(A^\bullet)\to \cH^{-1}(E[0])=0
    \end{align*}
    Thus $\cH^{-1}(A^\bullet)=0$, this would imply $\mathcal{U}^{\oplus 2}\hookrightarrow \mathcal{Q}\smvee$, which is absurd. The second case can be dealt with similarly. Thus the representation corresponding to $E$ is semistable.\\
    
    The rest follows from Corollary \ref{aaa}.
\end{proof}
By now we have established a well-behaved correspondence between (semi)stable instanton sheaves on $V_5$ and $(-1,1)$-(semi)stable representations of $Q_3$ with dimension vector $(2,2)$. Next is to use this correspondence to analyze the two moduli spaces.

\section{Moduli Space of Instantons}
Recall $\mathrm{Rep(Q_3)}^{(-1,1)-ss}_{(2,2)}$ denotes the quasi-affine scheme parametrizing $(-1,1)$-semistable representations of $Q_3$ with dimension vector $(2,2)$, $M_{rep}=\mathrm{Rep(Q_3)}^{(-1,1)-ss}_{(2,2)}//G_{2,2}$ is the moduli space of such representations, where $G_{2,2}=(\mathrm{GL}(2)\times \mathrm{GL}(2))/\C^*_{\mathrm{diag}}$. $M$ denotes the moduli space of semistable rank $2$ sheaves with Chern classes $c_1=0$, $c_2=2$ and $c_3=0$.\ 

By Proposition \ref{p1} and \ref{p2}, there exist a morphism $\Phi:\mathrm{Rep(Q_3)}^{(-1,1)-ss}_{(2,2)}\to M$. $\Phi$ is invariant under $G_{2,2}$ since changing the basis of $\mathcal{U}^{\oplus 2}$ and $\mathcal{Q}\smvee^{\oplus2}$ does not affect the isomorphism class of the quotient. Thus we obtain a morphism:
\begin{align*}
    \phi:M_{rep}\to M
\end{align*}
\begin{prop}\label{injective}
  $\phi$ is injective.
\end{prop}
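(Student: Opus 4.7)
The strategy is to invert $\phi$ using the Kuznetsov equivalence $\Psi\colon \mathcal{B}_{V_5}\simeq\mathcal{D}^b(Q_3)$. Since $M_{rep}$ and $M$ parametrize $S$-equivalence classes, my first move is to replace every point by its polystable representative, so that $\phi([R_1])=\phi([R_2])$ translates to an honest isomorphism of the associated polystable sheaves. The plan is then to prove that the cokernel construction $R\mapsto E$ takes polystable representations to polystable sheaves, and that $\Psi$ recovers $R$ from $E$ on the nose, thereby making $\phi$ injective on closed points.

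Concretely, I would argue as follows. For a polystable semistable $R$ of dimension vector $(2,2)$, there are two cases. If $R$ is stable, then by Corollary \ref{aaa} the sheaf $E=\phi([R])$ is stable, hence polystable. If $R$ is strictly semistable, then as a polystable representative we may take $R\cong R'\oplus R''$ with both summands stable of dimension $(1,1)$. Since $\Psi^{-1}$ is additive, $C_R\cong C_{R'}\oplus C_{R''}$, and by Lemma \ref{repcompute} together with Lemma 2.3 each $C_{R_i}$ is isomorphic to the ideal sheaf $I_{l_i}$ of the line $l_i\subset V_5$ corresponding to the point of $\mathbb{P}(A)$ given by $R_i$. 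Hence $E\cong I_{l'}\oplus I_{l''}$ is polystable. In either case $E$ lies in $\mathcal{B}_{V_5}$ because it is obtained as the cokernel of a morphism between objects of $\mathcal{B}_{V_5}$, and the isomorphism $C_R\cong E$ in $\mathcal{D}^b(V_5)$ from Propositions \ref{p1} and \ref{p2} yields $\Psi(E)\cong\Psi(C_R)=R$. Consequently, if $\phi([R_1])=\phi([R_2])$ with $R_1,R_2$ polystable, then their associated polystable sheaves $E_1,E_2$ are isomorphic, and applying $\Psi$ gives $R_1\cong\Psi(E_1)\cong\Psi(E_2)\cong R_2$, so $[R_1]=[R_2]$ in $M_{rep}$.

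The main obstacle is the strictly semistable situation, where $S$-equivalence on both sides must be carefully matched. The key ingredient that makes this clean is Lemma 2.3, which canonically identifies stable $(1,1)$-dimensional representations with ideal sheaves of lines through the same projective plane $\mathbb{P}(A)$; combined with additivity of $\Psi^{-1}$, it ensures that the Jordan--H\"older factors of $R$ are exactly the $\Psi$-images of the Jordan--H\"older factors of $E$. Once this bijection on simple factors is in hand, the injectivity of $\phi$ follows formally from the fact that $\Psi$ is an equivalence of categories.
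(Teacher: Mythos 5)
Your proof is correct, and in the strictly semistable case it takes a genuinely different (and arguably cleaner) route than the paper. The stable case is identical: both arguments recover $R$ from $E$ via the equivalence $\Psi$ and the isomorphism $C_R\cong E$. For the strictly semistable case, the paper keeps an arbitrary representative $R$, observes that $E_R$ is an extension of ideal sheaves of lines (so $S$-equivalent to $I_{l_1}\oplus I_{l_2}$), and then proves a separate lemma by an explicit computation: writing $\Psi(E)$ in upper-triangular matrix form and exhibiting a one-parameter family $g_n\in G_{2,2}$ degenerating it to $\Psi(I_{l_1}\oplus I_{l_2})$, so that the two orbits are identified in the GIT quotient. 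You instead normalize to polystable representatives upstream, note that a polystable strictly semistable $R$ splits as a sum of two stable $(1,1)$-representations, and use additivity of $\Psi^{-1}$ together with the $\pP(A)$-correspondence between $(1,1)$-representations and ideal sheaves of lines to conclude that $E_R\cong I_{l'}\oplus I_{l''}$ is polystable; injectivity then follows formally from uniqueness of polystable representatives on both sides and the fact that $\Psi$ is an equivalence. Your version avoids the explicit degeneration computation entirely, at the cost of invoking the standard structure theory of polystable objects on both sides; the paper's version has the side benefit of explicitly displaying the orbit-closure relation for non-split extensions. Both rest on the same core mechanism, namely that $\Psi$ recovers the representation from the sheaf.
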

\begin{proof}
    Let $R_1,R_2$ be two representations such that $\phi([R_1])=\phi([R_2])$.
If $\phi([R_1])$ is stable, then $E_{R_1}\cong E_{R_2}$.  Using the equivalence $\Psi$, we see $R_1\simeq R_2$ as objects in $\mathcal{D}^b(Q_3)$. But since they both lie in $\mathrm{Mod}(Q_3)$, we see they are actually isomorphic. Hence $[R_1]=[R_2]$.\ 

If $\phi([R_1])$ is strictly semistable. Then $E_{R_1}$, $E_{R_2}$ are both $S$-equivalent to $I_{l_1}\oplus I_{l_2}$ where $l_1,l_2$ are two lines in $V_5$. Then the proposition follows from the following lemma.
\end{proof}

\begin{lem}
    Let $E$ be a sheaf defined by
    \begin{align*}
        0\to I_{l_1}\to E\to I_{l_2}\to 0
    \end{align*}
    where $l_1,l_2$ are two lines in $V_5$. Then $\Psi(E)$ and $\Psi(I_{l_1}\oplus I_{l_2})$ corresponds to the same point in $M_{rep}$.
\end{lem}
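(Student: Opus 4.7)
The plan is to apply the equivalence $\Psi$ to the defining short exact sequence of $E$ and recognize that both $\Psi(E)$ and $\Psi(I_{l_1}\oplus I_{l_2})$ share the same Jordan--H\"older factors, hence represent the same $S$-equivalence class in $M_{rep}$.

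First, by \cite[Lemma 4.2]{Ku2} each ideal sheaf $I_{l_i}$ lies in $\mathcal{B}_{V_5}$, and Lemma \ref{repcompute} identifies $\Psi(I_{l_i})$ with a representation of $Q_3$ of dimension vector $(1,1)$. Such a representation is automatically $(-1,1)$-stable: the only candidate destabilizing proper subrepresentation has dimension vector $(1,0)$, which would force the $1$-dimensional source space to lie in the common kernel of all three arrows; but the resolution $0\to \mathcal{U}\xrightarrow{a}\mathcal{Q}\smvee\to I_{l_i}\to 0$ corresponds to a nonzero $a\in A$, so at least one of the three arrows of $\Psi(I_{l_i})$ is nonzero. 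In particular both $\Psi(I_{l_1})$ and $\Psi(I_{l_2})$ determine stable points in the moduli of $(1,1)$-representations.

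Next, I would apply $\Psi$ to the short exact sequence $0\to I_{l_1}\to E\to I_{l_2}\to 0$, noting that $E\in\mathcal{B}_{V_5}$ by Lemma \ref{12344}. As a triangulated equivalence, $\Psi$ converts this into an exact triangle
\begin{equation*}
    \Psi(I_{l_1})\to \Psi(E)\to \Psi(I_{l_2})\to \Psi(I_{l_1})[1]
\end{equation*}
in $\mathcal{D}^b(Q_3)$. By Theorem \ref{rep2}, $\Psi(E)$ is concentrated in cohomological degree $0$, and the same holds for $\Psi(I_{l_1})$ and $\Psi(I_{l_2})$. Any distinguished triangle in $\mathcal{D}^b(Q_3)$ whose three vertices all lie in the heart $\mathrm{Mod}(Q_3)$ is equivalent to a short exact sequence in that heart, yielding
\begin{equation*}
    0\to \Psi(I_{l_1})\to \Psi(E)\to \Psi(I_{l_2})\to 0.
\end{equation*}

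This exhibits $\Psi(E)$ as a strictly $(-1,1)$-semistable representation of dimension vector $(2,2)$ whose Jordan--H\"older factors are the stable representations $\Psi(I_{l_1})$ and $\Psi(I_{l_2})$. Since $\Psi$ is additive, $\Psi(I_{l_1}\oplus I_{l_2})\cong \Psi(I_{l_1})\oplus \Psi(I_{l_2})$ manifestly has the same two Jordan--H\"older factors. Because points of the GIT quotient $M_{rep}$ are in bijection with $S$-equivalence classes of semistable representations, both $\Psi(E)$ and $\Psi(I_{l_1}\oplus I_{l_2})$ are $S$-equivalent to the polystable object $\Psi(I_{l_1})\oplus \Psi(I_{l_2})$ and therefore represent the same point of $M_{rep}$. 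The only substantive step is confirming that every vertex of the triangle above lies in the heart, but this is already furnished by Theorem \ref{rep2} together with \cite[Lemma 4.2]{Ku2}, so I do not anticipate any serious obstacle.
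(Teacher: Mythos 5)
Your proof is correct, but it takes a genuinely different route from the paper's. You argue structurally: since $E$ is a strictly semistable instanton sheaf lying in $\mathcal{B}_{V_5}$, the equivalence $\Psi$ sends the defining short exact sequence to a distinguished triangle whose three vertices all sit in the heart $\mathrm{Mod}(Q_3)$, hence to a short exact sequence of representations; both $\Psi(E)$ and $\Psi(I_{l_1}\oplus I_{l_2})$ therefore have the same Jordan--H\"older factors $\Psi(I_{l_1})$, $\Psi(I_{l_2})$ (each a stable $(1,1)$-representation), and King's identification of points of the GIT quotient with $S$-equivalence classes finishes the argument. The paper instead works entirely in coordinates: it writes $\Psi(E)$ as a triple of upper-triangular $2\times 2$ matrices whose diagonal entries are the points $[a_i:b_i:c_i]\in\pP(A)$ corresponding to $l_1$ and $l_2$, and exhibits an explicit one-parameter family $g_n\in G_{2,2}$ with $\lim_{n\to\infty} g_n\cdot\Psi(E)=\Psi(I_{l_1}\oplus I_{l_2})$, so the two orbits have a common point in their closures. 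Your version is cleaner and generalizes immediately (e.g.\ to longer filtrations or higher dimension vectors), at the cost of invoking the general $S$-equivalence formalism for quiver moduli and the standard fact that a triangle with all vertices in the heart is a short exact sequence; the paper's version is elementary and self-contained, producing the degeneration by hand without appeal to that machinery. One small point worth making explicit in your write-up: the sheaf $E$ in the lemma is automatically semistable (as an extension of two semistable sheaves with equal reduced Hilbert polynomial) and satisfies $H^1(E(-1))=0$, which is what licenses the applications of Lemma \ref{12344} and Theorem \ref{rep2}; you use both but only gesture at the hypotheses.
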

\begin{proof}
    Recall $F(V_5)=\pP(A)\simeq \pP^2$. Suppose $l_1$ corresponds to $[a_1:b_1:c_1]$ and $l_2$ corresponds to $[a_2:b_2:c_2]$. Then $\Psi(E)$ is given by
    \[
     \bigg(
    \begin{bmatrix}
   
    a_1 &\alpha\\
    0   &a_2
    \end{bmatrix}
    \begin{bmatrix}
    b_1 &\beta\\
    0   &b_2
    \end{bmatrix}
    \begin{bmatrix}
    c_1 &\gamma\\
    0   &c_2
    \end{bmatrix}
    \bigg)
    \]
    where $\alpha,\beta,\gamma\in\C$. Let 
    \begin{align*}
        g_n=
        \bigg(
        \begin{bmatrix}
        \frac{1}{n}&0\\
        0&1
        \end{bmatrix},
        \begin{bmatrix}
        \frac{1}{n}&0\\
        0&1
        \end{bmatrix}
        \bigg)\in G_{2,2}
    \end{align*}
    then
    \begin{align*}
        g_n\cdot \Psi(E)=
        \bigg(
        \begin{bmatrix}
    a_1 &\frac{\alpha}{n}\\
    0   &a_2
    \end{bmatrix}
    \begin{bmatrix}
    b_1 &\frac{\beta}{n}\\
    0   &b_2
    \end{bmatrix}
    \begin{bmatrix}
    c_1 &\frac{\gamma}{n}\\
    0   &c_2
    \end{bmatrix}
    \bigg)
    \end{align*}
    and
    \begin{align*}
        \lim_{n\to\infty}g_n\cdot \Psi(E)&=
        \bigg(
        \begin{bmatrix}
        a_1 &0\\
    0   &a_2
    \end{bmatrix}
    \begin{bmatrix}
    b_1 &0\\
    0   &b_2
    \end{bmatrix}
    \begin{bmatrix}
    c_1 &0\\
    0   &c_2
    \end{bmatrix}
    \bigg)\\
    &=\Psi(I_{l_1}\oplus I_{l_2})
    \end{align*}
\end{proof}
We now try to understand the image of $\phi$. By semi-continuity, there exist an open subscheme $M^{inst}$ of $M$ parametrizing instanton sheaves.
We start by showing the smoothness of $M^{inst}$. To do this we first compute some related invariants.
\begin{lem}\label{smooth1}
    Let $\theta$ be the theta characteristic of a smooth conic $C$ in $V_5$. Let $E$ be the kernel of the natural surjection $H^0(\theta(1))\otimes\cO_{V_5}\to \theta(1)$. Then $\Ext^2(E,E)=0$ and $\Ext^1(E,E)$ has dimension $5$.
\end{lem}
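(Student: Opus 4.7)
The plan is to bypass a direct $\Ext$ computation on $V_5$ (which would require knowing $\Ext^*(\theta(1),\theta(1))$, and hence the normal bundle $N_{C/V_5}$) by transferring the problem to the Kronecker quiver via the equivalence $\Psi\colon \mathcal{B}_{V_5}\simeq \mathcal{D}^b(Q_3)$ of Lemma \ref{repcompute}.

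First I would verify that $E$ lies in $\mathcal{B}_{V_5}$ and corresponds to a stable quiver representation. By Lemma \ref{d34}, $E$ is stable with Chern classes $(0,2,0)$, so Lemma \ref{12344} puts $E$ in $\mathcal{B}_{V_5}$, and Theorem \ref{rep2} says $\Psi(E)$ is a representation of $Q_3$ of dimension vector $(2,2)$. Proposition \ref{ss} then upgrades this to $(-1,1)$-stability since $E$ is stable. Because $\Psi$ is an equivalence of triangulated categories,
\begin{align*}
    \Ext^i_{V_5}(E,E)\cong \Ext^i_{\mathcal{D}^b(Q_3)}(\Psi(E),\Psi(E))
\end{align*}
for every $i$.

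Next, since the path algebra $\C Q_3$ is hereditary, we have $\Ext^i(\Psi(E),\Psi(E))=0$ for all $i\geq 2$, giving $\Ext^2(E,E)=0$ immediately. For the remaining computation I would use the Euler form of $Q_3$: for representations $R,R'$ with dimension vectors $(d_1^R,d_2^R)$ and $(d_1^{R'},d_2^{R'})$,
\begin{align*}
    \chi(R,R')=d_1^R d_1^{R'}+d_2^R d_2^{R'}-3\, d_1^R d_2^{R'}.
\end{align*}
Specializing to $R=\Psi(E)$ with dimension vector $(2,2)$, this yields $\chi(R,R)=4+4-12=-4$.

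Finally, since $\Psi(E)$ is stable we have $\Hom(\Psi(E),\Psi(E))=\C$, and combining with the vanishing of $\Ext^{\geq 2}$ and the Euler form computation gives
\begin{align*}
    \dim\Ext^1(E,E)=\dim\Hom(\Psi(E),\Psi(E))-\chi(\Psi(E),\Psi(E))=1-(-4)=5,
\end{align*}
as claimed. The only subtle point is invoking Proposition \ref{ss} to ensure that stability (not merely semistability) transfers across $\Psi$, since we need $\Hom(\Psi(E),\Psi(E))=\C$; this is already covered by the machinery of Section 4, so no new obstacle arises.
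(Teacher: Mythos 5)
Your proof is correct, but it takes a genuinely different route from the paper's. The paper works directly on $V_5$: it applies $\Hom(-,E)$ to the defining sequence $0\to E\to H^0(\theta(1))\otimes\cO_{V_5}\to\theta(1)\to 0$, sandwiches $\Ext^2(E,E)$ between $H^0(\theta(1))\otimes H^2(E)=0$ and $\Ext^3(\theta(1),E)\simeq\Hom(E,\theta(-1))^*$, kills the latter by embedding $\Hom(E,\theta(-1))$ into $\Hom(\mathcal{Q}\smvee{}^{\oplus 2},\theta(-1))=0$ via the splitting type of $\mathcal{Q}\smvee|_C$, and then gets $\dim\Ext^1(E,E)=5$ from $\Hom(E,E)=\C$, $\Ext^3(E,E)=\Hom(E,E(-2))^*=0$, and Riemann--Roch ($\chi(E,E)=-4$). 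You instead push everything through $\Psi$ to $\mathcal{D}^b(Q_3)$ and use hereditariness plus the quiver Euler form $\langle(2,2),(2,2)\rangle=8-12=-4$. Your chain of citations is sound and non-circular: Lemma \ref{d34}, Lemma \ref{12344} (whose proof gives $H^1(E(-1))=0$, so $E$ is an instanton sheaf and Theorem \ref{rep2} applies), and Proposition \ref{ss} all precede Lemma \ref{smooth1} and do not depend on it; note also that you could get $\Hom(\Psi(E),\Psi(E))=\C$ directly from simplicity of the stable sheaf $E$ without invoking Proposition \ref{ss} at all. The trade-off is that the paper's argument is self-contained sheaf cohomology needing none of the Section 4 correspondence, whereas yours is shorter and conceptually cleaner but leans on that heavier machinery --- indeed the paper itself uses exactly your hereditary argument for the locally free case in the proof of Lemma \ref{smooth3}, so the author could equally have handled the conic case your way.
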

\begin{proof}
    Consider the exact sequence:
    \begin{align*}
        \Ext^2(H^0(\theta(1))\otimes\cO_{V_5},E)\to\Ext^2(E,E)\to \Ext^3(\theta(1),E)
    \end{align*}
    We have $\Ext^2(H^0(\theta(1))\otimes\cO_{V_5},E)\simeq H^0(\theta(1))\otimes H^2(E)=0$ and $\Ext^3(\theta(1),E)\simeq \Hom(E,\theta(-1))^*$. Note we have injection $\Hom(E,\theta(-1))\to \Hom(\mathcal{Q}\smvee\oplus \mathcal{Q}\smvee,\theta(-1))=0$ by looking at the splitting type of $\mathcal{Q}\smvee$. Thus $\Ext^2(E,E)=0$. Moreover, $\Ext^3(E,E)\simeq\Hom(E,E(-2))^*=0$ and $\Hom(E,E)=\C$. By Riemann-Roch, $\chi(E,E)=-4$. Thus $\Ext^1(E,E)$ is five dimensional.
\end{proof}

\begin{lem}\label{smooth 2}
    Let $l_1,l_2\subset V_5$ be two lines. Then $\Ext^2(I_{l_1},I_{l_2})=0$ and $\dim\Ext^1(I_{l_1},I_{l_2})=1$ if $l_1\neq l_2$ and $2$ if $l_1=l_2$.
\end{lem}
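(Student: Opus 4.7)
The plan is to reduce everything to the Kronecker-type resolution
$$0 \to \mathcal{U} \xrightarrow{a} \mathcal{Q}^{\vee} \to I_L \to 0$$
supplied by Kuznetsov's Lemma~2.5, applied to each of $l_1$ and $l_2$, and then use the semi-orthogonal decomposition to kill almost every $\Ext$ group in sight.

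First, I would compute $\Ext^\bullet(\mathcal{U}, I_{l_2})$ and $\Ext^\bullet(\mathcal{Q}^\vee, I_{l_2})$ by applying $\Hom(\mathcal{U},-)$ and $\Hom(\mathcal{Q}^\vee,-)$ to the resolution of $I_{l_2}$. The relevant inputs are exceptionality of $\mathcal{U}$ and $\mathcal{Q}^\vee$, the vanishing $\Ext^\bullet(\mathcal{Q}^\vee,\mathcal{U})=0$ coming from the order in the SOD, and Lemma~\ref{repcompute} which gives $\Ext^\bullet(\mathcal{U},\mathcal{Q}^\vee)=A=\C^3$ in degree $0$. The long exact sequences collapse and yield
$$\Hom(\mathcal{Q}^\vee, I_{l_2})=\C,\qquad \Hom(\mathcal{U}, I_{l_2})=\C^2,$$
with all higher $\Ext$s vanishing (the injectivity of the map $\C\to\C^3$ in the second case is automatic because it sends $1\mapsto a_2\neq 0$).

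Next, I would apply $\Hom(-,I_{l_2})$ to the resolution $0\to\mathcal{U}\to_{a_1}\mathcal{Q}^\vee\to I_{l_1}\to 0$. With the vanishings above, the long exact sequence reads
\begin{align*}
0 \to \Hom(I_{l_1},I_{l_2}) \to \C \xrightarrow{a_1^*} \C^2 \to \Ext^1(I_{l_1},I_{l_2})\to 0,
\end{align*}
and $\Ext^i(I_{l_1},I_{l_2})=0$ for $i\geq 2$ (in particular $\Ext^2=0$, which is the first half of the lemma). For the $\Ext^1$ count I would compute $\Hom(I_{l_1},I_{l_2})$ directly: any nonzero map extends through the double duals $I_{l_i}^{\vee\vee}=\cO_{V_5}$ to multiplication by a scalar, which restricts back into the subsheaves iff the vanishing ideals agree, i.e.\ iff $l_1=l_2$. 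Thus $\dim\Hom(I_{l_1},I_{l_2})$ is $1$ or $0$ according to whether $l_1=l_2$ or not, and the alternating-sum identity in the displayed sequence gives
$$\dim\Ext^1(I_{l_1},I_{l_2})=\dim\Hom(I_{l_1},I_{l_2})+1,$$
which is the stated $1$ or $2$.

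The only genuine subtlety is the $\Hom$ computation—everything else is routine bookkeeping with the SOD. One could instead pin it down by a Riemann--Roch / Euler characteristic calculation of $\chi(I_{l_1},I_{l_2})$ using the resolution, which would give the same numerical answer and serve as a consistency check.
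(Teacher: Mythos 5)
Your proof is correct, but it takes a genuinely different route from the paper's. The paper works with the ideal-sheaf sequence $0\to I_{l_1}\to\cO_{V_5}\to\cO_{l_1}\to 0$: it kills $\Ext^2(I_{l_1},I_{l_2})$ and $\Ext^3(I_{l_1},I_{l_2})$ by Serre duality (identifying $\Ext^3(\cO_{l_1},I_{l_2})$ with $\Hom(I_{l_2},\cO_{l_1}(-2))^*$ and using the splitting type of $\mathcal{Q}^{\vee}$ on a line), and then gets the $\Ext^1$ count from the Riemann--Roch computation $\chi(I_{l_1},I_{l_2})=-1$ together with the value of $\Hom$. You instead resolve \emph{both} ideal sheaves by $\mathcal{U}\to\mathcal{Q}^{\vee}$ and let the semi-orthogonality relations $\Ext^\bullet(\mathcal{Q}^{\vee},\mathcal{U})=0$, $\Ext^\bullet(\mathcal{U},\mathcal{Q}^{\vee})=A$ do all the work: the vanishing of $\Ext^{\geq 2}$ and the identity $\dim\Ext^1=\dim\Hom+1$ both fall out of two long exact sequences, with no duality or Riemann--Roch needed. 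Your computation is in effect the quiver-side calculation of $\Ext^\bullet_{Q_3}$ between the two $(1,1)$-dimensional representations, which is arguably more in the spirit of the rest of the paper; the paper's argument is more self-contained on the sheaf side and parallels Druel's treatment of the cubic. One small streamlining available to you: rather than invoking double duals, $\Hom(I_{l_1},I_{l_2})$ is already the kernel of your map $\C\to\C^2=A/\C a_2$, which is precomposition of the projection $\mathcal{Q}^{\vee}\to I_{l_2}$ with $a_1$, hence vanishes iff $a_1\notin\C a_2$, i.e.\ iff $l_1\neq l_2$.
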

\begin{proof}
    Consider the exact sequence:
    \begin{align*}
        \Ext^2(\cO_{V_5},I_{l_2})\to\Ext^2(I_{l_1},I_{l_2})\to \Ext^3(\cO_{l_1},I_{l_2})
    \end{align*}
    We have $\Ext^2(\cO_{V_5},I_{l_2})=0$ and $\Ext^3(\cO_{l_1},I_{l_2})\simeq \Hom(I_{l_2},\cO_{l_1}(-2))^*$. Note we have injection $$\Hom(I_{l_2},\cO_{l_1}(-2))\to\Hom(\mathcal{Q}\smvee,\cO_{l_1}(-2))=0$$ by looking at the splitting type of $\mathcal{Q}\smvee$. Thus $\Ext^2(I_{l_1},I_{l_2})=0$. Moreover, $\Ext^3(I_{l_1},I_{l_2})\simeq\Hom(I_{l_2},I_{l_1}(-2))^*=0$. By Riemann-Roch, $\chi(I_{l_1},I_{l_2})=-1$. Thus the lemma follows.
\end{proof}

Let $N\geq 1$ be an integer and $W$ be a complex vector space. Let $Q$ be the Quot-scheme of the quotient $W\otimes \cO_{V_5}(-N)\to E$ of $V_5$ with rank $2$ and Chern classes $c_1(E)=0$, $c_2(E)=2$ and $c_3(E)=0$ and $L$ the natural polarization \cite{Si}. The group $G=PGL(W)$ acts on $Q$ and a suitable power of $L$ is $G$-linearized. Let $Q_c^{ss}$ be the $PGL(W)$-semistable points corresponding to quotients without torsion and $Q_c$ the closure of $Q_c^{ss}$ in $Q$. When the integer $N$ and
the vector space $W$ are suitably chosen, the following properties are satisfied. The map $W\otimes \cO_{V_5}\to E(N)$ induce an isomorphism $W\simeq H^0(E(N))$ and $h^i(E(k))=0$ for $k\geq N$ and $i\geq 1$ and for all $E$ in $Q_c$. The point $[E]\in Q_c$ is semistable if and only if the sheaf $E$ is semistable if and only if $E\in Q_c^{ss}$. The stabilizer of $[E]$ in $GL (W)$ is identified with the group of
automorphisms of the sheaf $E$ and moduli space is then the GIT quotient $Q_c^{ss}//G$. There exist an open $G$-invariant subscheme $Q^{inst}\subset Q_c^{ss}$ parametrizing sheaves $E$ with $H^1(E(-1))=0$, and $M^{inst}$ is the GIT quotient $Q^{inst}//G$.
\begin{lem}\label{smooth3}
With the above hypothesis, the scheme $Q^{inst}$ is smooth.
\end{lem}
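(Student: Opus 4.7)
The plan is to reduce smoothness of $Q^{inst}$ at a point $[E]$ to the vanishing of $\Ext^2(E,E)$, and then to check this vanishing case by case using the classification of instanton sheaves from Section~3 together with the equivalence $\Psi$.

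First, I would invoke the standard deformation theory of the Quot scheme. If $K$ denotes the kernel of the quotient map $W\otimes\cO_{V_5}(-N)\twoheadrightarrow E$, then the Zariski tangent space to $Q$ at $[E]$ is $\Hom(K,E)$ and the obstructions lie in $\Ext^1(K,E)$. Applying $\Hom(-,E)$ to the defining short exact sequence and using that $N$ and $W$ were chosen so that $H^i(E(N))=0$ for $i\geq 1$ (whence $\Ext^i(W\otimes\cO_{V_5}(-N),E)=W^*\otimes H^i(E(N))=0$ for $i\geq 1$), one obtains a canonical isomorphism
\begin{equation*}
\Ext^1(K,E)\cong \Ext^2(E,E).
\end{equation*}
Thus it suffices to prove $\Ext^2(E,E)=0$ for every instanton sheaf $E$.

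By Theorem~3.5 and the definition of an instanton sheaf, there are three cases for $[E]\in Q^{inst}$: (a) $E$ is a locally free instanton bundle; (b) $E$ is the stable sheaf associated to a smooth conic $C\subset V_5$; (c) $E$ is strictly semistable, an extension of two ideal sheaves of lines. Case (b) is exactly Lemma~\ref{smooth1}, which gives $\Ext^2(E,E)=0$. For case (a), since $E$ is an instanton bundle we have $E\in\mathcal{B}_{V_5}$ by Theorem~\ref{rep2}, and the equivalence $\Psi\colon\mathcal{B}_{V_5}\simeq\mathcal{D}^b(Q_3)$ yields $\Ext^i(E,E)\cong\Ext^i(\Psi(E),\Psi(E))$; as $Q_3$ is a finite acyclic quiver (hence $\C Q_3$ is hereditary), these $\Ext$ groups vanish for $i\geq 2$, giving $\Ext^2(E,E)=0$ at once.

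For case (c), write $0\to I_{l_1}\to E\to I_{l_2}\to 0$. I would apply $\Hom(-,E)$ and $\Hom(E,-)$ to this sequence to reduce $\Ext^2(E,E)$ to a subquotient of $\Ext^2(I_{l_i},I_{l_j})$ for $i,j\in\{1,2\}$, all of which vanish by Lemma~\ref{smooth 2}. Combining the three cases gives $\Ext^2(E,E)=0$ throughout $Q^{inst}$, and hence $\Ext^1(K,E)=0$, so $Q^{inst}$ is smooth. The only slightly delicate step is case (a): one must be certain that instanton bundles really do lie in $\mathcal{B}_{V_5}$ before invoking $\Psi$, but this is precisely what Theorem~\ref{rep2} (and the instanton condition $H^1(E(-1))=0$) guarantees, so no additional work is needed.
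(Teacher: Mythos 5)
Your proposal is correct and takes essentially the same approach as the paper: reduce smoothness of $Q^{inst}$ at $[E]$ to the vanishing of $\Ext^2(E,E)$ via the Quot-scheme obstruction theory and the choice of $N$, then check the vanishing in the same three cases (locally free instantons via $\Psi$ and the hereditary path algebra of $Q_3$, the conic case via Lemma~\ref{smooth1}, and the strictly semistable case via Lemma~\ref{smooth 2}). The only cosmetic difference is that you upgrade the paper's injection $\Ext^1(K,E)\hookrightarrow\Ext^2(E,E)$ to an isomorphism, which also follows from the chosen vanishing $h^i(E(N))=0$ and changes nothing.
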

\begin{proof}
    The tangent space of $Q^{ss}_c$ at a point $[E]$ is isomorphic to $\Hom(F,E)$ where $F$ is the kernel of the map $W\otimes \cO_{V_5}(-N)\to E$. The scheme $Q_c^{ss}$ is smooth at the point if $\Ext^1(F,E)=0$. Consider the exact sequence:
    \begin{align*}
        \Ext^1(W\otimes \cO_{V_5}(-N),E)\to \Ext^1(F,E)\to \Ext^2(E,E)
    \end{align*}
    We then obtain an inclusion $\Ext^1(F,E)\to \Ext^2(E,E)$ since $h^1(E(N))=0$. To show the smoothness of $Q^{inst}$, it suffices then to prove $\Ext^2(E,E)=0$ for $[E]\in Q^{inst}$. If $E$ is stable and locally free, then $\Ext^2(E,E)=\Ext^2_{\mathcal{D}^b(Q_3)}(\Psi(E),\Psi(E))=0$ since the path algebra of a quiver is hereditary. If $E$ is stable but not locally free, we apply Lemma \ref{smooth1}. If $E$ is strictly semistable, then $E$ is the extension of $I_1$ and $I_2$, the vanishing follows from Lemma \ref{smooth 2}.
\end{proof}
\begin{thm}\label{smooth}
     The moduli space $M^{inst}$ of semistable sheaves of rank $2$ with Chern classes $c_1(E)=0,c_2(E)=2,c_3(E)=0$ satisfying $H^1(E(-1))=0$ on $V_5$ is smooth of dimension $5$.
\end{thm}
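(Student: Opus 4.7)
The plan is to build on Lemma~\ref{smooth3}, which shows that the parameter space $Q^{inst}$ is smooth, and pass to the GIT quotient $M^{inst}=Q^{inst}//G$ with $G=PGL(W)$, while computing the dimension from the Ext groups.

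At a stable point $[E]$, the stabilizer of $E$ in $G$ is trivial, since $\mathrm{Aut}(E)=\C^{*}$ by simplicity of stable sheaves and this is absorbed by the $\C^{*}$-quotient defining $PGL(W)$. Hence the orbit map $Q^{inst}\to M^{inst}$ is smooth at $E$ and $M^{inst}$ inherits smoothness at $[E]$. At a strictly semistable point $[E]=[I_{l_{1}}\oplus I_{l_{2}}]$, Luna's \'etale slice theorem reduces smoothness to the vanishing of the obstruction space $\Ext^{2}(E,E)$, which is exactly what the case analysis in Lemma~\ref{smooth3} supplies (invoking Lemma~\ref{smooth1} for the conic case, Lemma~\ref{smooth 2} for the ideal-sheaf case, and the hereditary nature of the path algebra of $Q_{3}$ via the equivalence $\Psi$ for the locally free stable case).

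For the dimension, I would work at a stable $[E]$, where the Zariski tangent space to $M^{inst}$ equals $\Ext^{1}(E,E)$. Stability gives $\Hom(E,E)=\C$; Lemma~\ref{smooth3} gives $\Ext^{2}(E,E)=0$; and Serre duality gives $\Ext^{3}(E,E)\simeq\Hom(E,E(-2))^{*}=0$, since a nonzero map $E\to E(-2)$ would violate stability ($E$ has slope $0$ while $E(-2)$ has slope $-2$). Riemann--Roch on $V_{5}$, applied with $\mathrm{ch}(E)=2-2l$ and $\mathrm{td}(\mathcal{T}_{V_{5}})=1+h+\tfrac{8}{3}l+p$, yields $\chi(E,E)=-4$. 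Therefore $\dim\Ext^{1}(E,E)=5$, and $M^{inst}$ has dimension $5$.

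The main obstacle is local smoothness at strictly semistable points where the stabilizer is nontrivial; once the uniform vanishing of $\Ext^{2}(E,E)$ is in hand from the earlier lemmas, Luna's slice criterion resolves this without additional input, so the obstacle dissolves cleanly.
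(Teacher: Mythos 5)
Your treatment of the stable locus and the dimension count is fine and matches the paper: the obstruction space $\Ext^2(E,E)$ vanishes by Lemma~\ref{smooth3}, $\Ext^3(E,E)\simeq\Hom(E,E(-2))^*=0$ by stability, $\chi(E,E)=-4$ by Riemann--Roch, so $\dim\Ext^1(E,E)=5$ and $M^{s-inst}$ is smooth of dimension $5$ because $Q^{s-inst}\to M^{s-inst}$ is a principal $G$-bundle.

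However, there is a genuine gap at the strictly semistable points $[E]=[I_{l_1}\oplus I_{l_2}]$. You assert that once $\Ext^2(E,E)=0$ is known, ``Luna's slice criterion resolves this without additional input.'' It does not. The vanishing of $\Ext^2(E,E)$ gives smoothness of the Luna slice $W$ (the versal deformation space of $E$) at $x$, but the moduli space near $[E]$ is the quotient $W//G_x$ by the nontrivial reductive stabilizer $G_x$, and a GIT quotient of a smooth affine variety by a reductive group is in general singular. The paper therefore has to do an extra, explicit computation: it linearizes to the quotient $T_xW//G_x$ and checks smoothness at $0$ case by case. When $l_1\neq l_2$, $T_xW=\Ext^1(E,E)$ is $6$-dimensional, $G_x=G_m\times G_m$ acts with weight $t/t'$ on $\Ext^1(I_{l_1},I_{l_2})$ and $t'/t$ on $\Ext^1(I_{l_2},I_{l_1})$ (each of dimension $1$ by Lemma~\ref{smooth 2}), and the invariant ring is a polynomial ring in $4+1=5$ variables, so $T_xW//G_x\cong\aA^5$. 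When $l_1=l_2=l$, $T_xW=\Ext^1(I_l,I_l)\otimes\mathrm{End}(U)$ is $8$-dimensional, $G_x=\mathrm{PGL}(2)$ acts by conjugation on $\mathrm{End}(U)$, and the quotient is again $\aA^5$ by the classification in Laszlo's work. Without these computations your argument establishes smoothness of the slice but not of $M^{inst}$ at the strictly semistable points, so the step you describe as dissolving cleanly is exactly the step that still needs to be carried out.
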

\begin{proof}
    See \cite[Theorem 4.6]{D}. Note $Q^{inst}$ is $G$-invariant and open in $Q^{ss}_c$. Let $x\in Q^{inst}$ and $E$ be the corresponding sheaf. Let $Q^{s-inst}\subset Q^{inst}$ be the set of stable instanton sheaves and $M^{s-inst}$ be the moduli space of stable instanton sheaves. The scheme $Q^{s-inst}$ is a principal $G$-space over $M^{s-inst}$ and hence $M^{s-inst}$ is smooth by the above lemma. It remains to study the case when $E=I_{l_1}\oplus I_{l_2}$ where $l_1,l_2$ are two lines in $V_5$. The orbit $O(x)$ of $x$ under $G$ is closed. The stabilizer $G_x$ of $x$ is a reductive group and there exist an affine subscheme  $W\subset Q^{ss}_c$ containing $x$ which is locally closed and stable under $G_x$ so that the morphism $W//G_x\to Q^{ss}_c//G$ is \'etale(\cite{L}). Let $(W, x)$ be the germ from $W$ to $x$ and let $F$ be the restriction to $X\times (W, x)$ of the tautological quotient on $X \times Q$. Then $((W,x),F)$ is a space of versal deformation for the sheaf $E$.(\cite{Og} Proposition 1.2.3). The germ $W$ is then smooth at $x$ by Lemma \ref{smooth 2}. Since the morphism $W//G_x\to Q^{ss}_c//G$ is \'etale, it suffices to show the quotient $W//G_x$ is smooth at $[x]$. Now there exist a $G_x$ linear morphism $(W,x)\to (T_xW,0)$ \'etale at $x$ so that the induced morphism $W//G_x\to T_xW//G_x$ is \'etale at $[x]$(\cite{L}). It is therefore sufficient to prove that the quotient $T_xW//G_x$ is smooth at 0.\
    
    Suppose $l_1$ and $l_2$ are distinct. The tangent space $T_xW=\Ext^1(E,E)$ is of dimension $6$ and $G_x=G_m\times G_m$ acts on the space by formula (\cite{Og} Lemma 1.4.16)
    \begin{align*}
        (t,t')\cdot(\sum_{i,j}e_{i,j})=e_{1,1}+t/t'e_{1,2}+t'/te_{2,1}+e_{2,2}.
    \end{align*}
    It is easy to verify the quotient $T_xW//G_x$ is the affine space $\mathbb{A}^5$ and in particular smooth at $0$.\
    
    Suppose $l_1$ and $l_2$ are the same and use $l$ to denote the line. The tangent space $\Ext^1(E,E)$ is then of dimension $8$ and $G_x=\mathrm{PGL}(2)$. Let $T=\Ext^1(I_l,I_l)$ and let $U$ be a vector space of dimension $2$. The group $G_x$ acts on $T_xW=T\otimes \mathrm{End}(U)$ by conjugation on $\mathrm{End}(U)$(\cite{Og} Lemma 1.4.6) The quotient $T_x//G_x$ is then isomorphic to $\mathbb{A}^5$(\cite{La}, III, case 2) and in particular smooth at $0$.
\end{proof}
Since $\mathcal{U}$ and $\mathcal{Q}\smvee$ both lie in $\mathcal{B}_{V_5}$, we see $[E]\in M$ lies in the image of  $\phi$ only if $H^1(E(-1))=0$. Thus we have a morphism $\phi:M_{rep}\to M^{inst}$.
\begin{thm}\label{mainthm}
    $\phi:M_{rep}\to M^{inst}$ is an isomorphism. As a result, the moduli space of semistable rank $2$ sheaves with Chern classes $c_1=0$, $c_2=2$ and $c_3=0$ on $V_5$ has a connected component isomorphic to $\pP^5$.
\end{thm}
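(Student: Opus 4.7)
The plan is to show that $\phi\colon M_{rep}\to M^{inst}$ is a proper bijective birational morphism to a normal variety, and invoke Zariski's Main Theorem. Injectivity is already in hand via Proposition \ref{injective}. For surjectivity, let $[E]\in M^{inst}$. Because $E$ is an instanton sheaf, Theorem \ref{rep2} produces a short exact sequence
\begin{equation*}
0\to \mathcal{U}^{\oplus 2}\to \mathcal{Q}\smvee^{\oplus 2}\to E\to 0,
\end{equation*}
so that $\Psi(E)$ is a representation $R$ of $Q_3$ with dimension vector $(2,2)$. Proposition \ref{ss} then guarantees that $R$ is $(-1,1)$-semistable (stable, respectively, when $E$ is stable). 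By construction of $\Phi$ via the cone of $f_R=\Psi^{-1}(R)$, one has $\phi([R])=[E]$. Combined with Proposition \ref{injective}, this shows $\phi$ is bijective on closed points.

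Next I would observe that $\phi$ is proper: $M_{rep}\cong\pP^5$ is projective by Proposition \ref{coor}, while $M^{inst}$ is separated as an open subscheme of the GIT quotient $M$. The morphism $\phi$ is dominant by surjectivity, and a bijective morphism of irreducible $\C$-varieties of the same dimension (both equal to $5$) is birational in characteristic zero, since the generic fiber is a single reduced point. Finally, $M^{inst}$ is smooth of dimension $5$ by Theorem \ref{smooth}, hence normal. Zariski's Main Theorem then gives that any proper birational morphism from an irreducible variety to a normal variety with finite (here, one-point) fibers is an isomorphism, so $\phi$ is an isomorphism. Since $M_{rep}\cong\pP^5$ is projective and connected, its image $M^{inst}$ is both open (by semicontinuity of $h^1(E(-1))$) and closed in $M$, and therefore a connected component.

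The main obstacle is the bookkeeping in step one: one must be careful that $\phi$ is well-defined as a morphism of GIT quotients (not just a map on closed points), and that surjectivity onto $M^{inst}$ works at the level of S-equivalence classes rather than isomorphism classes. For strictly semistable $[E]$, the sequence from Theorem \ref{rep2} gives a specific representation $R$, and one needs to check that $\phi([R])$ indeed equals the S-equivalence class of $E$; this is exactly the content of the unnamed lemma following Proposition \ref{injective}, which already establishes that both $\Psi(E)$ and $\Psi(I_{l_1}\oplus I_{l_2})$ give the same point of $M_{rep}$. A conceptually cleaner alternative to the birationality step would be to identify tangent spaces directly: $T_{[R]}M_{rep}=\Ext^1_{Q_3}(R,R)$ and $T_{[E]}M^{inst}=\Ext^1_{V_5}(E,E)$ are canonically isomorphic via the derived equivalence $\Psi$ of Lemma \ref{repcompute}, so $d\phi$ is an isomorphism everywhere, making the proper bijective morphism $\phi$ étale and hence an isomorphism. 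Either route completes the proof and, combined with $M_{rep}\cong\pP^5$, yields the asserted component of $M$.
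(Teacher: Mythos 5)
Your proposal is correct and follows essentially the same route as the paper: injectivity from Proposition \ref{injective}, surjectivity from Theorem \ref{rep2} together with Proposition \ref{ss}, properness from the projectivity of $M_{rep}$, birationality, and finally the standard fact that a proper bijective birational morphism onto a normal (here smooth, by Theorem \ref{smooth}) variety is an isomorphism, with the image being open and closed to conclude it is a connected component. The only cosmetic difference is that you obtain birationality from the degree-one, characteristic-zero argument, whereas the paper identifies tangent spaces via the equivalence $\Psi$ to show $\phi$ is \'etale (hence an open immersion) on the stable locus --- precisely the alternative you mention at the end --- and both are valid.
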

\begin{proof}
    Since both $M_{rep}$ and $M$ are projective, $\phi: M_{rep}\to M$ is proper. Since $M^{inst}$ is an open subscheme of $M$, $\phi:M_{rep}\to M^{inst}$ is proper. By Lemma \ref{injective}, $\phi$ is injective. By Theorem \ref{rep2} and Proposition \ref{ss}, $\phi $ is surjective. As a result, $M^{inst}$ is connected.
    
    Let $[R]\in M_{rep}$ be a stable representation, then the tangent space at $[R]$ is given by $\Ext_{Q_3}^1(R,R)$. The tangent space at $\phi(R)$ (which is also stable) is given by $\Ext^1(\Psi^{-1}(R),\Psi^{-1}(R))$.
    Since $\Psi:\mathcal{B}_{V_5}\to\mathcal{D}^b(Q_3)$ is an equivalence, $\phi$ induces isomorphism between tangent spaces, so $\phi$ is \'etale when restricted to the open stable locus. Since $\phi$ is also injective and we are working over $\C$, $\phi$ is an open immersion over the stable locus (See for example Stack Project $40.14$). Thus $\phi$ is birational. 
    Now $\phi$ is a bijective birational proper morphism, it has to be an isomorphism.\
    
    At last, since $\phi$ is proper and $M^{inst}\cong \pP^5$ is its image, $M^{inst}\subset M$ is closed. But $M^{inst}$ is also open, hence $M^{inst}\cong \pP^5$ is a connected component of $M$.
\end{proof}
\begin{rem}
\noindent
\begin{enumerate}
    \item  Theorem \ref{mainthm} shows that a natural compactification of the moduli space of minimal instanton bundles on $V_5$ is $\pP^5$. 
    \item If Conjecture \ref{conj} is true, the complement of $M^{inst}$ in $M$ is actually empty. Then the moduli space of semistable rank $2$ sheaves with Chern classes $c_1=0$, $c_2=2$ and $c_3=0$ on $V_5$ is isomorphic to $\pP^5$.
\end{enumerate}
\end{rem}

\end{document}